\documentclass[11pt]{article}
\usepackage[centertags]{amsmath}
\usepackage{amsfonts}
\usepackage{amssymb}
\usepackage{amsthm}
\usepackage{soul}
\usepackage{mathrsfs}
\usepackage{extarrows}
\usepackage[margin=1in]{geometry}
\usepackage{multirow}
\usepackage{array}
\usepackage{tocloft}
\usepackage{tabularx}
\usepackage{booktabs}
\usepackage{enumitem}
\usepackage{tikz-cd}
\usepackage{caption}
\usepackage{hyperref}

\newcommand{\vertiii}[1]{{\left\vert\kern-0.25ex\left\vert\kern-0.25ex\left\vert #1
    \right\vert\kern-0.25ex\right\vert\kern-0.25ex\right\vert}}

\theoremstyle{plain}
\newtheorem{thm}{Theorem}[section]

\newtheorem{lem}[thm]{Lemma}
\newtheorem{prop}[thm]{Proposition}

\theoremstyle{definition}
\newtheorem{construction}[thm]{Construction}
\newtheorem{defn}[thm]{Definition}
\newtheorem{rem}[thm]{Remark}
\newtheorem{exam}[thm]{Example}

\newcommand{\CC}{\mathbb{C}}
\newcommand{\DD}{\mathbb{D}}

\newcommand{\NN}{\mathbb{N}}

\newcommand{\RR}{\mathbb{R}}

\newcommand{\TT}{\mathbb{T}}

\newcommand{\ZZ}{\mathbb{Z}}

\newcommand{\BBB}{\mathcal{B}}
\newcommand{\CCC}{\mathcal{C}}

\newcommand{\HHH}{\mathcal{H}}
\newcommand{\III}{\mathcal{I}}

\newcommand{\LLL}{\mathcal{L}}

\newcommand{\SSS}{\mathcal{S}}

\newcommand{\la}{\langle}
\newcommand{\ra}{\rangle}
\newcommand{\intd}{\mathrm{d}}

\newcommand{\bz}{\bar{z}}
\newcommand{\bw}{\bar{w}}

\newcommand{\zbf}{\mathbf{z}}

\newcommand{\polyn}{\mathbb{C}[\mathbf{z}]}

\newcommand{\poly}{\mathbb{C}[z]}
\newcommand{\Tbf}{\mathbf{T}}

\newcommand{\Tr}{\mathrm{Tr}}

\newcommand{\SA}{\mathcal{SA}}
\newcommand{\SArm}{\mathrm{SA}}

\newcommand{\Han}{\operatorname{Han}}

\allowdisplaybreaks[4]

\makeatletter\@addtoreset{equation}{section} \makeatother
\title {On the Schur--Agler Norm}
\author{
Michael Hartz
\thanks{Fachrichtung Mathematik, Universit\"at des Saarlandes, 66123 Saarbr\"ucken, Germany. M.H. was partially supported by the Emmy Noether Program of the German Research Foundation (DFG Grant 466012782)},
Yi Wang
	\thanks{College of Mathematics and Statistics, Center of Mathematics, Chongqing University, 401331, Chongqing,  China, wang\_yi@cqu.edu.cn}
    }
\date{\today}

\setlist[enumerate,1]{label=(\arabic*)}
 \setlist[enumerate,2]{label=(\alph*)}
 \setlist[enumerate,3]{label=(\roman*)}

\begin{document}
\maketitle

\begin{abstract}
  We establish a new description of the Schur--Agler norm of a holomorphic function on the polydisc as the solution of a convex optimization
  problem. Consequences of this description are explored both from a theoretical and from a practical point of view.
  Firstly, we give unified proofs of the known facts that the Schur--Agler norm can be tested with diagonalizable or nilpotent matrix tuples, as well as a new proof of the existence of Agler decompositions. Secondly, we describe the predual of the Schur--Agler space as a space of analytic functions on the polydisc. Thirdly, we give a unified treatment of existing counterexamples of von Neumann's inequality in our framework, and exhibit several methods for constructing counterexamples. On the practical side, we explain how the Schur--Agler norm of a homogeneous polynomial
  can be numerically approximated using semidefinite programming.

~

\noindent{Keywords}: Schur--Agler norm, von Neumann's inequality, And\^{o}'s inequality, commuting contractions, Hankel operator, semidefinite programming

\noindent{MSC (2020)}: Primary 47A13; Secondary 47A20, 47B35, 90C22
	
	\end{abstract}

\section{Introduction}

In 1951, John von Neumann proved the following inequality in \cite{vN51}: suppose $\HHH$ is a Hilbert space and $T\in\BBB(\HHH)$ is a contraction, i.e., $\|T\|\leq1$, then for any analytic polynomial $p\in\poly$,
\[
\|p(T)\|\leq\|p\|_\infty.
\]
Here $\|p\|_\infty$ denotes the supremum norm of $p$ on the unit disk $\DD$. In 1963, Tsuyoshi And\^{o} proved the two variable analogue of von Neumann's inequality in \cite{Ando63}. That is, for a commuting pair $(T_1, T_2)$ of contractions on a Hilbert space and any analytic polynomial $p\in\CC[z_1, z_2]$,
\[
\|p(T_1, T_2)\|\leq\|p\|_\infty:=\|p\|_{\DD^2}.
\]
However, in 1973, N.\ Th.\ Varopoulos proved in \cite{Varo73} that the corresponding inequality for three or more commuting contractions fails. Explicit counterexamples were constructed, for example, in \cite{CrDa75}\cite{Dix76}\cite{Hol01}\cite{Varo74}. 
Efforts to generalize or to explain this phenomenon have provoked research in operator theory, operator algebras and function theory. 
The survey paper \cite{BaBe13} and the books \cite{AgMcPickInterp}\cite{AgMcYo20book}\cite{Paulsen_CBMaps}\cite{PisierBook01} contain a rich source of references on related topic. Also see the papers \cite{AgMc05}\cite{Hartz17}\cite{Hartz25}\cite{Knese16}\cite{Knese21}\cite{knese25} for some more recent developments.

A commuting tuple $\Tbf=(T_1,\cdots,T_d)$ of operators is said to consist of strict contractions if $\|T_i\|<1$ for each $i$.
For a holomorphic function $f$ on $\DD^d$, its Schur--Agler norm is 
\[
\|f\|_{\SArm}:=\sup\{\|f(\Tbf)\|~:~\Tbf=(T_1,\cdots,T_d) \text{ is a commuting tuple of strict contractions}\}.
\]
The function $f$ is said to be in the Schur--Agler space $\SA_d$ if $\|f\|_{\SArm}<\infty$.
It is elementary to show that $\|f\|_{\SArm}\geq\|f\|_\infty$. Thus $\SA_d\subseteq H^\infty(\DD^d)$. The classical von Neumann inequality and the And\^{o} inequality show that when $d=1, 2$, $\|f\|_{\SArm}=\|f\|_{\DD^d}$ and $\SA_d=H^\infty(\DD^d)$. For $d\geq3$, it is not known whether the von Neumann's inequality holds up to a constant. Equivalently, it is not known whether the two norms are equivalent, or whether the two spaces contain the same set of functions. Let
\[
C(d)=\sup\left\{\frac{\|p\|_{\SArm}}{\|p\|_\infty}~:~p\in\polyn, p\neq0\right\}=\sup\left\{\frac{\|f\|_{\SArm}}{\|f\|_\infty}~:~f\in\SA_d, f\neq0\right\}.
\]
(The equality can for instance be seen by using Fej\'er means, cf.\ Lemma \ref{lem:SA_approx} below.)
Thus, the question is whether $C(d)$ is finite for $d\geq3$. Let $P_{d,n}$ be the space of homogeneous polynomials in $d$ variables of degree $n$. When no confusion arises, we simply write $P_n$ for $P_{d,n}$. Define
\[
C(d,n)=\sup\left\{\frac{\|p\|_{\SArm}}{\|p\|_\infty}~:~p\in P_{d,n},~p\neq0\right\}.
\]
By adding a variable to make a polynomial homogeneous, one can show that
\[
C(d)\leq\sup_n C(d+1,n)\leq C(d+1),
\]
reducing many questions to the homogeneous case.
In \cite{Dix76}, P.\ G.\ Dixon gave upper and lower bounds for $C(d,n)$:
\[
d^{\frac{1}{2}[\frac{n-1}{2}]}\lesssim_n C(d,n)\lesssim_n d^{\frac{n}{2}-1}.
\]
Here, the notation $A \lesssim_n B$ means that there exists a constant $C$, depending only on $n$,
such that $A \le C B$.
In \cite{Hartz25}, the first author showed that for $d\geq3$,
\[
C(d,n)\lesssim_d\left(\log(n+1)\right)^{d-3}.
\]
In particular, $\sup_nC(3,n)<\infty$.

This paper is motivated by the following simple observations:
\begin{enumerate}
    \item to compute the Schur--Agler norm, it suffices to consider cyclic commuting tuples of strict contractions;
    \item a cyclic commuting tuple $\Tbf$, with a distinguished cyclic vector $\xi$ is, up to unitary equivalence, determined by the semi-inner product
    \[
    \la p, q\ra_{\Tbf,\xi}:=\la p(\Tbf)\xi, q(\Tbf)\xi\ra,\quad\forall p, q\in\polyn=\CC[z_1,\cdots,z_d].
    \]
    \item for strictly contractive cyclic commuting tuples $(\Tbf,\xi)$, the associated semi-inner product $\la\cdot,\cdot\ra_{\Tbf,\xi}$ uniquely determines a positive bounded operator $L_{\Tbf,\xi}$ on the Hardy space $H^2(\DD^d)$ via
    \[
    \la L_{\Tbf,\xi}p,q\ra_2=\la p, q\ra_{\Tbf,\xi},\quad\forall p, q\in\polyn.
    \]
    Let us define the following convex cone of positive operators. 
\[
\LLL_c=\left\{L\in\BBB(H^2(\DD^d))~:~L\geq0,~M_{z_i}^\ast LM_{z_i}\leq L,~i=1,\cdots,d\right\}.
\]
Then $L_{\Tbf,\xi}\in\LLL_c$. Conversely, every operator $L\in\LLL_c$ defines a commuting tuple $\mathbf{T}$ with cyclic vector $\xi$ such that
\[ 
    \langle p(\mathbf T) \xi, q(\mathbf T) \xi \rangle = \la L p ,q \ra_2.
\]
\end{enumerate}
We explain the above in more detail in the beginning of Section \ref{sec: conv descrip of SA norm}.
The observations above leads to the following description of the Schur--Agler norm as the solution
of a convex optimization problem.
\begin{lem}[Lemma \ref{lem: SA norm with Lc}]\label{lem: intro SA in terms of Lc}
For any $f\in H^2(\mathbb{D}^d)$,
\[
\|f\|_{\SArm}^2=\sup\left\{\frac{\la L f, f\ra_2}{\la L1,1\ra_2}~:~L\in\LLL_c, L \neq 0  \right\},
\]
where both sides of the equality are allowed to be infinite.
\end{lem}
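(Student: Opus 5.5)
We prove the two inequalities separately. The dictionary between cyclic commuting tuples and the cone $\LLL_c$ goes in both directions, and each direction gives one inequality. Throughout, $f\in H^2(\DD^d)$ is written as $f=\sum_\alpha \hat f(\alpha) z^\alpha$. We set $f_r(z)=f(rz)$ for $0<r<1$, and let $f_N$ denote the Taylor partial sums or Fej\'er means.

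\textbf{Inequality ``$\geq$''.} Let $L\in\LLL_c$ with $L\neq 0$. We may assume $\la L1,1\ra_2>0$; otherwise the convention is to be checked separately, see below.

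First we build the tuple. Complete $\polyn$ with respect to the semi-inner product $\la Lp,q\ra_2$, after quotienting by null vectors, to obtain a Hilbert space $\HHH_L$. The inequalities $M_{z_i}^*LM_{z_i}\le L$ say exactly that multiplication by $z_i$ is a well-defined contraction $T_i$ on $\HHH_L$. These $T_i$ commute, and $\xi=[1]$ is cyclic with $\la p(\Tbf)\xi,q(\Tbf)\xi\ra=\la Lp,q\ra_2$.

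The tuple is contractive but not strictly so, so we replace it by $r\Tbf$ with $r<1$. Then
\[
\|f_r\|_{\SArm}^{2}\,\la L1,1\ra_2 \;\ge\; \|f(r\Tbf)\xi\|^2 \;=\; \la Lf_r,f_r\ra_2 ,
\]
using that $f(r\Tbf)\xi$ is the limit in $\HHH_L$ of $p_N(r\Tbf)\xi$, where $p_N$ are partial sums of $f$. This limit exists because $\|z^\alpha\|_{\HHH_L}\le \|L\|^{1/2}$, so the series $\sum_\alpha r^{|\alpha|}\hat f(\alpha)[z^\alpha]$ converges absolutely.

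Finally we let $r\to1$. We have $\|f_r\|_{\SArm}\le\|f\|_{\SArm}$, since $r\Tbf$ is again a tuple of strict contractions. Also $f_r\to f$ in $H^2(\DD^d)$ and $L$ is bounded, so $\la Lf_r,f_r\ra_2\to\la Lf,f\ra_2$. This gives the inequality.

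For the degenerate case $\la L1,1\ra_2=0$: then $\xi=0$ in $\HHH_L$, so $\la Lp,p\ra_2=\|p(\Tbf)\xi\|^2=0$ for all polynomials $p$. Since polynomials are dense and $L$ is bounded, $L=0$. So this case cannot occur for $L\neq0$.

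\textbf{Inequality ``$\leq$''.} Let $\Tbf$ be a commuting tuple of strict contractions and $\eta$ a vector. We want to bound $\|f(\Tbf)\eta\|$. Restricting to the cyclic subspace generated by $\eta$, which is invariant under $f(\Tbf)$ because $f(\Tbf)$ is a norm limit of polynomials in $\Tbf$, reduces to the cyclic case $\xi=\eta$.

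Define the sesquilinear form $\la p,q\ra_{\Tbf,\xi}$. The main step is to show that it is bounded in the $H^2$ norm. Since $\|T_i\|\le\rho<1$, we get $\|\Tbf^\alpha\xi\|\le\rho^{|\alpha|}\|\xi\|$. Then by Cauchy--Schwarz,
\[
\|p(\Tbf)\xi\|\le\sum_\alpha|\hat p(\alpha)|\rho^{|\alpha|}\|\xi\|\le \|p\|_2\Big(\sum_\alpha\rho^{2|\alpha|}\Big)^{1/2}\|\xi\| .
\]
So there is a bounded $L=L_{\Tbf,\xi}\ge0$ with $\la Lp,q\ra_2=\la p(\Tbf)\xi,q(\Tbf)\xi\ra$.

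Next we check $L\in\LLL_c$:
\[
\la M_{z_i}^*LM_{z_i}p,p\ra_2=\|T_ip(\Tbf)\xi\|^2\le\|p(\Tbf)\xi\|^2=\la Lp,p\ra_2 .
\]

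The same absolute convergence shows $p_N(\Tbf)\xi\to f(\Tbf)\xi$ when $p_N\to f$ in $H^2(\DD^d)$. Hence
\[
\|f(\Tbf)\xi\|^2=\la Lf,f\ra_2\le S\,\la L1,1\ra_2=S\|\xi\|^2,
\]
where $S$ is the supremum on the right-hand side of the lemma. If $\xi=0$ there is nothing to prove. Taking the supremum over $\Tbf$ and $\xi$ gives $\|f\|_{\SArm}^2\le S$.

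Both sides may be infinite, and the argument handles this without change. The main obstacle is the analytic bookkeeping: identifying $f(\Tbf)\xi$ with the $H^2$-limit through $L$, and, in the first half, the passage $r\to1$, which needs the boundedness of $L$ on all of $H^2(\DD^d)$, not just on polynomials.
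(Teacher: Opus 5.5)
Your proof is correct and follows the paper's argument. In the ``$\geq$'' direction, scaling the tuple associated with $L$ to $r\Tbf$ is the same as the paper's passage to $L_r=D_rLD_r$ (indeed $L_r=L_{r\Tbf,\xi}$), and the ``$\leq$'' direction is the paper's reduction to cyclic strictly contractive tuples and their operators $L_{\Tbf,\xi}$, with you filling in the boundedness of $L_{\Tbf,\xi}$ and the identity $\|f(\Tbf)\xi\|^2=\la Lf,f\ra_2$ for non-polynomial $f$, which the paper leaves implicit.
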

For homogeneous $p\in P_{d,n}$, the description above reduces to the finite dimensional convex cone
\[
\LLL_c^{(n)}=\left\{L\in\LLL_c~:~L=\sum_{k=0}^nL_k,\text{ where }L_k\in\BBB(P_{d,k})\right\},
\]
see  Lemma \ref{lem: SA by graded}.

We remark that the operator tuples corresponding to $L\in\LLL_c^{(n)}$ are nilpotent of order $n+1$. For comparison,
we also give a parallel convex description of the supremum norm (Lemma \ref{lem:sup_norm_Lt}), where the cone $\LLL_c$ is replaced by the cone
\begin{align*}
  \LLL_t &=\left\{L\in\BBB(H^2(\DD^d))~:~L\geq0,~M_{z_i}^\ast LM_{z_i}=L,~i=1,\cdots,d\right\} \\
         &=\left\{\text{positive Toeplitz operators on }H^2(\DD^d)\right\}.
\end{align*}

Our convex descriptions are closely related to Agler's linear functional approach to cyclic operators and the Schur--Agler norm (cf. \cite{Agler82}\cite{AgMcYo20book}). They are also related to the approach in \cite{Wanglinearfunctional}.
To illustrate the usefulness of our framework,
we give new proofs of two existing results. First, we give a short and uniform proof of the fact
that when computing the Schur--Agler norm, it suffices to consider either nilpotent or diagonalizable matrices.
We write $\mathcal{C}_d$ for the set of cyclic commuting $d$-tuples of strictly contractive matrices (of arbitrary size).

\begin{prop}
  \label{prop:nilpotent-diagonal-intro}
For $f\in H^\infty(\mathbb{D}^d)$,
\begin{flalign*}
    \|f\|_{\SArm}=&\sup\left\{\|f(\Tbf)\|~:~\Tbf \in \mathcal{C}_d \text{ is jointly nilpotent}\right\}\\
    =&\sup\left\{\|f(\Tbf)\|~:~\Tbf \in \mathcal{C}_d \text{ is jointly diagonalizable}\right\}.
\end{flalign*}
\end{prop}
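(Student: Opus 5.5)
For both equalities the inequality ``$\geq$'' is trivial, since every tuple in $\mathcal{C}_d$ is a commuting tuple of strict contractions. So the content is ``$\leq$'': assuming $\|f\|_{\SArm}>c$, we must produce a nilpotent, and separately a diagonalizable, tuple in $\mathcal{C}_d$ with $\|f(\Tbf)\|>c$.

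\emph{Nilpotent case.} The plan is a tensoring trick that avoids Lemma \ref{lem: intro SA in terms of Lc}.
\begin{enumerate}
\item Start from a commuting tuple $\Tbf$ of strict contractions with $\|f(\Tbf)\|>c$. By Lemma \ref{lem:SA_approx} (Fej\'er means) we may replace $f$ by a polynomial $p$ with $\|p(\Tbf)\|>c$ and $\|p\|_{\SArm}\leq\|f\|_{\SArm}$. For the final comparison we need $\|p(\Tbf')\|$ close to $\|f(\Tbf')\|$ uniformly over strict contractions $\Tbf'$. I expect this to follow from the Fej\'er construction, since Fej\'er means of $f$ converge to $f$ in Schur--Agler norm whenever $f\in\SA_d$.
\item Let $S_N$ be the truncated shift on $\CC^N$, and put $T_i'=T_i\otimes S_N$. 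These commute, are strict contractions, and are jointly nilpotent of order $N$.
\item Write $p=\sum_k p_k$ in homogeneous parts. Then
\[
p(\Tbf')=\sum_{k<N}p_k(\Tbf)\otimes S_N^k .
\]
This is the compression to the first $N$ Fourier modes of the Toeplitz operator with operator-valued symbol $g(w)=\sum_k p_k(\Tbf)w^k=p(w\Tbf)$.
\item As $N\to\infty$ these compressions converge strongly to the full Toeplitz operator. Hence $\liminf_N\|p(\Tbf')\|\ge\|T_g\|=\sup_{|w|=1}\|p(w\Tbf)\|\ge\|p(\Tbf)\|>c$.
\item Choose a vector $x$ with $\|p(\Tbf')x\|>c\|x\|$. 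The invariant subspace generated by $x$ is spanned by monomials of degree $<N$ applied to $x$, so it is finite dimensional. Restricting $\Tbf'$ to it gives a cyclic, nilpotent, strictly contractive matrix tuple with $\|p(\cdot)\|>c$.
\item Invoke the uniform approximation from step 1 to return from $p$ to $f$.
\end{enumerate}

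\emph{Diagonalizable case.} Here I would use Lemma \ref{lem: intro SA in terms of Lc}. A jointly diagonalizable cyclic tuple with distinct joint eigenvalues $\lambda_1,\dots,\lambda_m\in\DD^d$ corresponds to an operator $L\in\LLL_c$ of the form
\[
\la Lq,r\ra_2=\sum_{j,k}A_{jk}\,q(\lambda_j)\overline{r(\lambda_k)},
\]
where $A\geq 0$ and each matrix $\big[(1-\lambda_{j,i}\bar\lambda_{k,i})A_{jk}\big]_{j,k}$ is positive semidefinite, for $i=1,\dots,d$. Let $\mathcal{K}$ be the convex cone generated by such $L$. The goal is to show that the functional $L\mapsto\la Lf,f\ra_2-c^2\la L1,1\ra_2$, which is positive somewhere on $\LLL_c$, is also positive somewhere on $\mathcal{K}$.
\begin{enumerate}
\item By the nilpotent case it suffices to treat $L\in\LLL_c^{(n)}$ (Lemma \ref{lem: SA by graded}). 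This is a finite-dimensional setting once we also pass from $f$ to a polynomial of degree $\le n$.
\item Approximate such $L$ by point-evaluation forms. The first attempt is to replace derivatives at the origin by divided differences at clusters of nearby points $\epsilon\omega$, with $\omega$ ranging over roots of unity. Alternatively, realize $L$ as an integral over the torus, or over a small polydisc, of rank-one point-evaluation forms, and discretize.
\item If direct approximation fails, argue by Hahn--Banach in the finite-dimensional space of forms on polynomials of degree $\le n$. Suppose $\la Lf,f\ra_2\le c^2\la L1,1\ra_2$ for all $L\in\mathcal{K}$. Then the kernel $c^2-f(z)\overline{f(w)}$ is nonnegative against every admissible $A$ on every finite set. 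Finite-set duality then gives, on each finite set, a decomposition
\[
c^2-f(z)\overline{f(w)}=\sum_{i=1}^d(1-z_i\bar w_i)\Gamma_i(z,w)
\]
with positive kernels $\Gamma_i$. Normal families pass this to a global decomposition, and applying it to any $L\in\LLL_c$ yields $\la Lf,f\ra_2\le c^2\la L1,1\ra_2$, a contradiction.
\end{enumerate}

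\emph{Main obstacle.} I expect the diagonalizable case to be the main obstacle. Nilpotent ideals need not be limits of radical ideals (non-smoothable points of Hilbert schemes when $d\ge3$), so naive perturbation of the nilpotent tuples from the first part cannot work. The argument must therefore go through the convex duality above, and the delicate point is the closedness and compactness needed to pass from finite sets of points to the whole polydisc. In the finite-set duality step, the $\Gamma_i$ come from a separation argument rather than from an assumed Agler decomposition, so there is no circularity.
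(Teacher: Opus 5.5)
\paragraph{Nilpotent case.} This half works. It is essentially the alternative argument mentioned in the paper's remark after Proposition~\ref{prop: nilpotent and diagonal}: tensor with the shift and truncate. However, one justification you give is false. Fej\'er means do \emph{not} converge in Schur--Agler norm in general. Already for $d=1$, where $\|\cdot\|_{\SArm}=\|\cdot\|_\infty$, uniform convergence of Fej\'er means would put $f$ in the disc algebra, and a singular inner function is not there.

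The detour through $p$ is unnecessary anyway. If $\max_i\|T_i\|\le r<1$, then $f(w\Tbf)=\sum_k f_k(\Tbf)w^k$ converges absolutely in norm for $|w|\le 1$. Also $f(\Tbf\otimes S_N)=\sum_{k<N}f_k(\Tbf)\otimes S_N^k$ is a finite sum. So steps 2--5 work verbatim with $f$ in place of $p$, and step 6 can be dropped.

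\paragraph{Diagonalizable case.} This half has a genuine gap. Steps 1--2 are not arguments, so everything rests on step 3, which is an outline of the hard direction of Agler's theorem.

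The finite-set duality is fine. The cone $\{\sum_i[(1-\lambda_{j,i}\bar\lambda_{k,i})\Gamma^i_{jk}]_{j,k}:\Gamma^i\ge 0\}$ is closed, because a vanishing sum forces all diagonal entries of the $\Gamma^i$ to vanish.

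What fails is ``normal families pass this to a global decomposition''. On a finite set $F$ the $\Gamma_i$ are merely positive matrices indexed by $F$, so there is nothing holomorphic for Montel's theorem to act on. A Tychonoff/Kurosh compactness argument does give positive kernels on $\mathbb{D}^d$ satisfying the identity pointwise. But to plug operators into them you need holomorphy. The missing idea that supplies it is the lurking isometry: the identity
$c^2+\sum_i z_i\bar w_i\Gamma_i(z,w)=f(z)\overline{f(w)}+\sum_i\Gamma_i(z,w)$
defines an isometry. Its contractive extension gives a transfer-function realization of $f/c$, and that realization yields $\|f(\Tbf)\|\le c$. With this filled in, your route is Agler's original proof, which is exactly the theorem the paper cites as the classical source of the diagonalizable statement.

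\paragraph{The paper's route.} The paper avoids duality entirely.
\begin{enumerate}
\item By Lemma~\ref{lem: SA norm with Lc}, one may replace $L\in\LLL_c$ by the invertible $L+\varepsilon I\in\LLL_c$.
\item The tuple associated with an invertible $L$ is $T_i=L^{1/2}M_{z_i}L^{-1/2}$, which is similar to $M_z$ (Proposition~\ref{prop:similar_to_M_z}).
\item Writing $\Tbf=S^{-1}M_zS$, the finite-dimensional spaces $S^*\mathrm{span}\{K_z:z\in F\}$ are $\Tbf^*$-invariant. The compressions of $\Tbf$ to them are cyclic and diagonalizable, and the projections onto them tend to $I$ strongly.
\item One finishes by scaling with $r<1$. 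Using $S^*\mathrm{span}\{z^\alpha:|\alpha|\le n\}$ instead gives the nilpotent case by the same argument.
\end{enumerate}
In particular, your worry about non-smoothable nilpotent ideals does not force a duality argument. The norm is approximated by passing through an infinite-dimensional tuple similar to $M_z$, not by perturbing the nilpotent tuple itself.
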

The fact that diagonalizable matrices suffice follows from a theorem of Agler \cite{Agler90decomposition};
something more general is proved in \cite{AgMcYo13}. The statement about nilpotent matrices, along with a discussion of the history, can be found in \cite{knese25}.

The key step in our proof of Proposition \ref{prop:nilpotent-diagonal-intro} is to show that for the computation of the Schur--Agler norm, it suffices to consider commuting contractive tuples that are similar to $M_z$ on the Hardy space (Proposition \ref{prop:similar_to_M_z}). The proof of this fact becomes very short in the $L$ operator language: in $\LLL_c$, one may approximate $L$ with $L+\epsilon I$. We also give a new proof for the existence of the Agler decomposition (Theorem \ref{thm: Agler decomp}). The key idea is to replace the pairing $\mathrm{Her}(\DD^d)\times\left(\mathrm{Her}(\DD^d)\right)^*$ (see Remark \ref{rem: two pairings}) with the pairing $\SSS^1(H^2(\DD^d))\times\BBB(H^2(\DD^d))$, and to recover a hereditary function as a kernel function of an operator in $\SSS^1(H^2(\DD^d))$.

The convex descriptions also allow us to realize the pre-dual of the Schur--Agler space as an analytic function space on $\DD^d$. For $g\in H^2(\DD^d)$, define
\[
\LLL_c(g)=\left\{L\in\LLL_c~:~L\geq g\otimes g\right\},\quad\text{and}\quad \|g\|_\ast=\inf\left\{\sqrt{\la L1,1\ra_2}~:~L\in\LLL_c(g)\right\}.
\]
We prove the following.
\begin{thm}[Theorem \ref{thm:dual_norm_general} (1)]
  \label{thm: intro dual_norm_general}
The dual Schur--Agler norm $\|\cdot\|_\ast$ is a norm, and for each  $f\in H^2(\mathbb{D}^d)$, we have
  \begin{equation*}
    \|f\|_{\SArm} = \sup \{ | \langle f,g \rangle_2|: g\in H^2(\mathbb{D}^d), \|g\|_* \leq 1 \}.
  \end{equation*}
  In fact, the map
  \begin{equation*}
    \SA_d \to (H^2(\mathbb{D}^d), \|\cdot\|_*)^*, \quad f \mapsto \langle \cdot,f \rangle_2,
  \end{equation*}
  is a conjugate linear isometric isomorphism.
\end{thm}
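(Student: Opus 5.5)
The plan is to derive everything from Lemma \ref{lem: intro SA in terms of Lc}, together with one Cauchy--Schwarz-type inequality for positive operators.

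\emph{Step 1: $\|\cdot\|_*$ is finite and a seminorm.} The identity $I$ lies in $\LLL_c$, since $M_{z_i}$ is an isometry. It satisfies $\la I1,1\ra_2=1$ and $\|g\|_2^2 I\geq g\otimes g$. Hence $\|g\|_I^2\,I\in\LLL_c(g)$ in the sense that $\|g\|_2^2 I\in\LLL_c(g)$, which gives $\|g\|_*\leq\|g\|_2<\infty$. Homogeneity follows from $\LLL_c(\lambda g)=|\lambda|^2\LLL_c(g)$, using that $\LLL_c$ is a cone.

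For the triangle inequality, take $L_1\in\LLL_c(g)$ and $L_2\in\LLL_c(h)$ with $\la L_11,1\ra_2=a^2$ and $\la L_21,1\ra_2=b^2$. For any $t>0$,
\[
(g+h)\otimes(g+h)\leq(1+t)\,g\otimes g+(1+t^{-1})\,h\otimes h .
\]
Therefore $L=(1+t)L_1+(1+t^{-1})L_2\in\LLL_c(g+h)$. Choosing $t=b/a$ (the cases $a=0$ or $b=0$ follow by letting $t\to 0$ or $t\to\infty$) gives $\la L1,1\ra_2=(a+b)^2$. Taking infima yields $\|g+h\|_*\leq\|g\|_*+\|h\|_*$.

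\emph{Step 2: the inequality $|\la f,g\ra_2|\leq\|f\|_{\SArm}\|g\|_*$.} Let $f\in H^2(\DD^d)$ and $L\in\LLL_c(g)$. Then
\[
|\la f,g\ra_2|^2=\la (g\otimes g)f,f\ra_2\leq\la Lf,f\ra_2\leq\|f\|_{\SArm}^2\la L1,1\ra_2 ,
\]
by Lemma \ref{lem: intro SA in terms of Lc}. Taking the infimum over $L$ proves the claim.

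This gives definiteness of $\|\cdot\|_*$. Apply the inequality with $f=z^\alpha$, a monomial, which has $\|z^\alpha\|_{\SArm}=1$. If $\|g\|_*=0$, then every Taylor coefficient of $g$ vanishes, so $g=0$. It also gives the inequality ``$\geq$'' in the sup formula.

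\emph{Step 3: the reverse inequality.} This is the only step requiring an idea. Suppose $s<\|f\|_{\SArm}$, allowing $\|f\|_{\SArm}=\infty$. By Lemma \ref{lem: intro SA in terms of Lc}, pick $L\in\LLL_c$ with $\la L1,1\ra_2=1$ and $\la Lf,f\ra_2>s^2$. Set
\[
g=\frac{Lf}{\sqrt{\la Lf,f\ra_2}}\in H^2(\DD^d).
\]
The Cauchy--Schwarz inequality for the positive semi-inner product $(u,v)\mapsto\la Lu,v\ra_2$ gives $|\la Lf,h\ra_2|^2\leq\la Lf,f\ra_2\la Lh,h\ra_2$ for every $h$. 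In operator form this reads $(Lf)\otimes(Lf)\leq\la Lf,f\ra_2\,L$, so $L\in\LLL_c(g)$ and hence $\|g\|_*\leq1$. Moreover
\[
\la f,g\ra_2=\sqrt{\la Lf,f\ra_2}>s .
\]
Letting $s\uparrow\|f\|_{\SArm}$ establishes the sup formula.

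\emph{Step 4: the duality map.} Since $\SA_d\subseteq H^\infty(\DD^d)\subseteq H^2(\DD^d)$, the map $f\mapsto\la\cdot,f\ra_2$ is well defined on $\SA_d$. By Steps 2 and 3 it is isometric into the dual of $(H^2(\DD^d),\|\cdot\|_*)$, and it is clearly conjugate linear.

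For surjectivity, let $\varphi$ be a $\|\cdot\|_*$-bounded functional. Because $\|\cdot\|_*\leq\|\cdot\|_2$, $\varphi$ is also bounded on the Hilbert space $H^2(\DD^d)$. By the Riesz representation theorem, $\varphi=\la\cdot,f\ra_2$ for some $f\in H^2(\DD^d)$. The sup formula from Step 3, which is valid for all $f\in H^2(\DD^d)$, then gives $\|f\|_{\SArm}=\|\varphi\|<\infty$, so $f\in\SA_d$.

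The main obstacle is Step 3, namely finding a norming vector $g$. The choice $g\propto Lf$ resolves it via the operator Cauchy--Schwarz inequality. The remaining steps are routine.
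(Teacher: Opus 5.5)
Your proposal is correct and follows essentially the paper's route. The paper packages your Steps 1--3 into the general cone Lemma~\ref{lem:cone_norm}, with the same $t$-weighted triangle-inequality trick, and its norming vector $g\otimes g=A^*PA$ (with $L=A^*A$ and $P$ the projection onto $\mathbb{C}Af$) is exactly your $g=Lf/\sqrt{\langle Lf,f\rangle_2}$; surjectivity is proved by the same Riesz argument. The only differences are cosmetic: you get definiteness from the duality inequality with $f=z^\alpha$ rather than directly from Lemma~\ref{lem: dual SA geq coefficients}, and in the degenerate cases of the triangle inequality the limit should be $t\to\infty$ when $a=0$ and $t\to 0$ when $b=0$ (as written, your pairing is reversed).
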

Correspondingly, the reduced convex description on homogeneous polynomials gives a reduced description of $\|q\|_\ast$ for $q\in P_{d,n}$. Also, $\|\cdot\|_{\SArm}$ and $\|\cdot\|_\ast$ are dual to each other when restricted to each $P_{d,n}$ (Theorem \ref{thm:dual_norm_general} (2)). The convex description of the supremum norm gives a realization of the predual of $H^\infty(\DD^d)$ (Proposition \ref{prop: dual sup norm}).
Let $(\SA_d)_*$ denote the completion of $(H^2(\mathbb{D}^d), \|\cdot\|_*)$. Then by Theorem \ref{thm: intro dual_norm_general}, this space is exactly the predual of $\SA_d$. In Proposition \ref{prop: dual SA basics}, we settle some basic properties for $(\SA_d)_*$. In particular, it is an analytic function space on $\DD^d$. We believe that the predual Schur--Agler space $(\SA_d)_*$ may open the door for new research.

With the $L$ operator framework, we are also able to recast the counterexamples constructed in \cite{CrDa75}\cite{Dix76}\cite{Hol01}\cite{Varo74}. In the Appendix, we give computations of their $L$ operators and give interpretations of their construction. One conclusion from these examples is that the Hankel operators play an important role in their construction. Let $g\in H^\infty(\DD^d)$ and let $\Gamma_g\in\BBB(H^2(\DD^d))$ be the small Hankel operator with symbol $g$. With the well-known identity $M_{z_i}^\ast\Gamma_g=\Gamma_gM_{z_i}$, it is easy to show that $\Gamma_g^\ast\Gamma_g\in\LLL_c$. This construction alone does not give us any counterexample of the von Neumann's inequality. However, for a homogeneous polynomial $q\in P_{d,n}$ and $0\leq k\leq n$, one may construct 
\[
L=\Gamma_q^\ast\Gamma_q\big|_{P_{d,\geq k}}+CP_{d,<k}.
\]
Here $C>0$ is an appropriately chosen constant, and we abuse notation by writing $P_{d,\geq k}$ (resp. $P_{d,<k}$) for both the space of polynomials of degree $\geq k$ (resp. $<k$) and the projection onto it. From this choice of $L$, we obtain two methods for constructing counterexamples (Method 1 and Method 2 in Section \ref{sec: methods of constructing L}). It turns out that the counterexamples in \cite{CrDa75}\cite{Dix76}\cite{Varo74} all match Method 2. The Holbrook example \cite{Hol01} is more delicate. But we recover the same lower bound with improved methods (Methods 3 and 4).

Our methods of constructing $L$ give upper bounds for the dual Schur--Agler norm $\|\cdot\|_\ast$, which results in lower bounds for the Schur--Agler norm. These lower bounds admit a fairly explicit description in terms of weak products.
Define
\[
\|p\|_{P_{d,k}\odot P_{d,l}}=\inf\left\{\sum_{i=1}^m\|f_i\|_2\cdot\|g_i\|_2~:~f_i\in P_{d,k},~g_i\in P_{d,l},\text{ and }p=\sum_{i=1}^m f_ig_i\right\},\quad\forall p\in P_{d,k+l},
\]
and
\[
  \|p\|_{Z_d\odot P_{d,k}\odot P_{d,l}}=\inf\left\{\sum_{i=1}^d\|f_i\|_{P_{d,k}\odot P_{d,l}}~:~f_i\in P_{d,k+l},\text{ and }p=\sum_{i=1}^d z_if_i\right\},\quad\forall p\in P_{d,k+l+1}.
\]
Again, we will omit the subscripts ``$d$'' when no confusion is caused.
For any $p\in P_{n}$, let us also define
\begin{equation*}
\vertiii{p}_1:=\max_{0\leq k\leq n}\|p\|_{P_k\odot P_{n-k}},\quad \vertiii{p}_2:=\max_{0\leq k\leq n-1}\|p\|_{Z\odot P_k\odot P_{n-k-1}}.
\end{equation*}
Then we prove the following.
\begin{thm}[Theorem \ref{thm: norm 1 and 2}]\label{thm: intro norm 1 and 2}
For any $p\in P_{d,n}$,
\begin{equation*}
\|p\|_{\SArm}\geq\vertiii{p}_2\geq\vertiii{p}_1.
\end{equation*}
\end{thm}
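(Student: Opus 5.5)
The plan is to prove the two inequalities separately. The second, $\vertiii{p}_2\geq\vertiii{p}_1$, is a direct manipulation of factorizations. The first, $\|p\|_{\SArm}\geq\vertiii{p}_2$, goes through the duality between $\|\cdot\|_{\SArm}$ and $\|\cdot\|_\ast$ on $P_{d,n}$ (Theorem \ref{thm:dual_norm_general} (2)), using the Hankel-type operators $L$ described in the introduction.

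\emph{Second inequality.} Fix $0\le k\le n-1$ and write $p=\sum_{i=1}^d z_if_i$ with $f_i=\sum_j g_{ij}h_{ij}$, where $g_{ij}\in P_k$ and $h_{ij}\in P_{n-k-1}$. Multiplication by $z_i$ is an isometry on $H^2(\DD^d)$, so $\|z_ig_{ij}\|_2=\|g_{ij}\|_2$. Hence $p=\sum_{i,j}(z_ig_{ij})h_{ij}$ is an admissible factorization in $P_{k+1}\odot P_{n-k-1}$. Likewise $p=\sum_{i,j}g_{ij}(z_ih_{ij})$ is admissible in $P_k\odot P_{n-k}$. Taking infima, and using the triangle inequality for the infimal norms, gives
\[
\|p\|_{P_{k+1}\odot P_{n-k-1}},\ \|p\|_{P_k\odot P_{n-k}}\le\|p\|_{Z\odot P_k\odot P_{n-k-1}}.
\]
As $k$ ranges over $0,\dots,n-1$, these cover every index $0,\dots,n$ appearing in $\vertiii{p}_1$ (assuming $n\ge1$).

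\emph{First inequality, duality step.} In finite dimensions, the unit ball of $\|\cdot\|_{P_k\odot P_l}$ is the convex hull of the compact set $\{fg:\|f\|_2\|g\|_2\le1\}$. So by Hahn--Banach,
\[
\|p\|_{Z\odot P_k\odot P_{l}}=\sup|\la p,q\ra_2|,
\]
where $l=n-k-1$ and the supremum runs over $q\in P_n$ with $|\la z_ifg,q\ra_2|\le\|f\|_2\|g\|_2$ for all $i$, all $f\in P_k$ and all $g\in P_l$. This condition says exactly that $\|\Gamma_qM_{z_i}|_{P_k}\|\le1$ for each $i$. By the duality of Theorem \ref{thm:dual_norm_general} (2), $\|p\|_{\SArm}\ge|\la p,q\ra_2|$ whenever $\|q\|_\ast\le1$. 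It therefore suffices to show that every such $q$ has $\|q\|_\ast\le1$.

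\emph{Construction of $L$.} Given such a $q$, set
\[
L=\Gamma_q^\ast\Gamma_q|_{P_{\ge k+1}}+P_{<k+1}.
\]
Since $\Gamma_q$ maps $P_j$ into $P_{n-j}$, the operator $\Gamma_q^\ast\Gamma_q$ preserves degree, so $L$ is block diagonal, bounded and positive, with $\la L1,1\ra_2=1$. To check $L\ge q\otimes q$, note that only the degree-$n$ part $f_n$ of $f$ pairs with $q$, and $n\ge k+1$. Then $|\la f,q\ra_2|^2=|\la f_n,q\ra_2|^2$ is the squared modulus of the constant term of $\Gamma_qf_n$, which is at most $\|\Gamma_qf_n\|^2\le\la Lf,f\ra_2$.

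For $L\in\LLL_c$ we check $M_{z_i}^\ast L_{j+1}M_{z_i}\le L_j$ degree by degree.
\begin{itemize}
\item For $j\ge k+1$, the identity $M_{z_i}^\ast\Gamma_q=\Gamma_qM_{z_i}$ gives $M_{z_i}^\ast\Gamma_q^\ast\Gamma_qM_{z_i}=\Gamma_q^\ast M_{z_i}M_{z_i}^\ast\Gamma_q\le\Gamma_q^\ast\Gamma_q$.
\item For $j<k$, both sides are the identity.
\item The critical degree is $j=k$. There the requirement is $\|\Gamma_qM_{z_i}|_{P_k}\|^2\le1$, which is precisely the hypothesis on $q$.
\end{itemize}
This matching of the hypothesis with the constant $C=1$ at the threshold degree is the main point of the argument. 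Consequently $L\in\LLL_c(q)$ and $\|q\|_\ast\le1$, completing the proof.
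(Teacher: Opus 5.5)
Your overall route is the paper's. The second inequality is \eqref{eqn:WP_easy_estimate} plus symmetry, and your two placements of $z_i$ do the same job. Your Hahn--Banach step re-derives Lemma~\ref{lem:Hankel_WP}~(2), and your $L$ is Method~2 with the constant normalized to $1$.

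\textbf{The gap.} Your verification of $L\ge q\otimes q$ is false as written. Write $f_n=\sum_\alpha c_\alpha z^\alpha$ and $q=\sum_\alpha q_\alpha z^\alpha$. The constant term of $\Gamma_q f_n$ is $\la \Gamma_q f_n,1\ra_2=\la q,\widehat{f_n}\ra_2=\la f_n,\widehat q\ra_2=\sum_\alpha c_\alpha q_\alpha$. It is not $\la f_n,q\ra_2=\sum_\alpha c_\alpha\overline{q_\alpha}$.

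Equivalently, by \eqref{eqn:Gamma_star_Gamma}, the degree-$n$ block of your $L$ is $\Gamma_q^*\Gamma_q|_{P_n}=\widehat q\otimes\widehat q$. So $L$ dominates $\widehat q\otimes\widehat q$, not $q\otimes q$. Since $L$ is block diagonal and $\|q\|_2=\|\widehat q\|_2$, the inequality $L\ge q\otimes q$ fails unless $q$ is a unimodular multiple of $\widehat q$.

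For a concrete failure, take $d=2$, $n=1$, $k=0$ and $q=(z_1+iz_2)/\sqrt2$. Your hypothesis holds, since $\|\Gamma_qM_{z_i}|_{P_0}\|=1/\sqrt2$ for $i=1,2$. Yet $\Gamma_q q=\la q,\widehat q\ra_2\cdot 1=0$, so $\la Lq,q\ra_2=0<1=\la (q\otimes q)q,q\ra_2$.

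\textbf{The fix.} This is exactly why the paper uses $\Gamma_{\widehat q}$ in Method~2. Take $L=\Gamma_{\widehat q}^*\Gamma_{\widehat q}|_{P_{\ge k+1}}+P_{\le k}$. Then $L|_{P_n}=q\otimes q$, and your degree-by-degree check goes through verbatim with $\widehat q$ in place of $q$. The critical degree now requires $\|\Gamma_{\widehat q}M_{z_i}|_{P_k}\|\le 1$.

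This is equivalent to your hypothesis. Indeed, $\la \Gamma_{\widehat q}M_{z_i}f,g\ra_2=\overline{\la\Gamma_qM_{z_i}\widehat f,\widehat g\ra_2}$, and $f\mapsto\widehat f$ is an isometric bijection of each $P_j$. Alternatively, keep your $L$: it shows $\|\widehat q\|_\ast\le1$ for every admissible $q$, and the admissible set is invariant under $q\mapsto\widehat q$.

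With this correction your proof is complete and coincides with the paper's.
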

Since our Method 2 recovers the construction of Dixon's lower bound, our lower bound $\vertiii{\cdot}_2$ is at least larger than the one given in \cite{Dix76}. The weak product expressions also match the general heuristic that polynomials with large Schur--Agler norms should be difficult to decompose. Nonetheless, it turns out such lower bounds are not going to show $C(d)=\infty$. In fact, we show that they are equivalent to the Hardy norm, which is smaller than the supremum norm.

\begin{thm}[Theorem \ref{thm:three_norm_two_norm}]
\label{thm:intro three_norm_two_norm}
    Let  $d \in \mathbb N$. There exist constants $C_d \le \binom{2d - 2}{d-1}$
    such that for all $p \in P_{d,n}$, we have
  \begin{equation*}
    \|p\|_2 \le \vertiii{p}_1 \le \sqrt{C_d} \|p\|_2.
  \end{equation*}
  Moreover,
  \begin{equation*}
    \vertiii{p}_1\leq\vertiii{p}_2\leq \sqrt{d}\vertiii{p}_1.
  \end{equation*}
\end{thm}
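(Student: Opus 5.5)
The plan has two parts. The comparison between $\vertiii{\cdot}_1$ and $\vertiii{\cdot}_2$ is elementary. The two-sided comparison with $\|\cdot\|_2$ needs a combinatorial decomposition, and that is the main obstacle. Throughout I use only that monomials form an orthonormal basis of $H^2(\DD^d)$ and that $\|z_i f\|_2=\|f\|_2$. I deliberately avoid the estimate $\|fg\|_2\le\|f\|_2\|g\|_2$, which fails on the polydisc: for $f=g=z_1+z_2$ we get $\|f^2\|_2^2=6>4=\|f\|_2^2\|g\|_2^2$.

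\emph{Lower bounds by $\|p\|_2$.} In $\|p\|_{P_0\odot P_n}$ the factors $f_i$ are constants. Hence $p=\sum_i f_ig_i$ is a single element of $P_n$, and the infimum equals $\|p\|_2$. This gives $\|p\|_2\le\vertiii{p}_1$.

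\emph{$\vertiii{p}_1\le\vertiii{p}_2$.} Take a decomposition $p=\sum_i z_if_i$ with $f_i=\sum_j a_{ij}b_{ij}$, where $a_{ij}\in P_k$ and $b_{ij}\in P_{n-k-1}$. Then $p=\sum_{i,j}(z_ia_{ij})b_{ij}$ is a decomposition in $P_{k+1}\odot P_{n-k-1}$ of cost $\sum\|a_{ij}\|_2\|b_{ij}\|_2$. Therefore
\[
\|p\|_{P_{k+1}\odot P_{n-k-1}}\le\|p\|_{Z\odot P_k\odot P_{n-k-1}},
\]
which covers the indices $1,\dots,n$ of $\vertiii{\cdot}_1$. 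For the index $0$, use the term $k=n-1$ of $\vertiii{\cdot}_2$. There the $b_{ij}$ are constants, so $\|f_i\|_{P_{n-1}\odot P_0}=\|f_i\|_2$. Consequently $\|p\|_2\le\sum_i\|z_if_i\|_2=\sum_i\|f_i\|_2$, and taking the infimum gives $\|p\|_{P_0\odot P_n}\le\|p\|_{Z\odot P_{n-1}\odot P_0}$.

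\emph{$\vertiii{p}_2\le\sqrt d\,\vertiii{p}_1$.} Fix $0\le k\le n-1$ and a decomposition $p=\sum_j a_jb_j$ with $a_j\in P_{k+1}$ and $b_j\in P_{n-k-1}$. Split each $a_j$ by assigning every monomial to the smallest index $i$ of a variable dividing it. This gives $a_j=\sum_i z_ia_{j,i}$ with orthogonal summands, so by Cauchy--Schwarz $\sum_i\|a_{j,i}\|_2\le\sqrt d\,\|a_j\|_2$. Setting $f_i=\sum_j a_{j,i}b_j$ yields $p=\sum_i z_if_i$ and
\[
\sum_i\|f_i\|_{P_k\odot P_{n-k-1}}\le\sqrt d\sum_j\|a_j\|_2\|b_j\|_2 .
\]

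\emph{Upper bound $\vertiii{p}_1\le\sqrt{C_d}\|p\|_2$ (main obstacle).} Fix $k$. The aim is a partition of the multi-indices $|\alpha|=n$ into at most $C_d$ classes $\mathcal A_1,\dots,\mathcal A_{C_d}$. Within each class $\mathcal A_r$, either every $z^\alpha$ is divisible by one fixed monomial $m_r\in P_k$, or every $z^\alpha$ is divisible by one fixed monomial $m_r'\in P_{n-k}$. Writing $p_r$ for the part of $p$ supported on $\mathcal A_r$, we get $p_r=m_rg_r$ (or $g_rm_r'$) with $\|g_r\|_2=\|p_r\|_2$. By orthogonality and Cauchy--Schwarz the total cost is then
\[
\sum_r\|p_r\|_2\le\sqrt{C_d}\Big(\sum_r\|p_r\|_2^2\Big)^{1/2}=\sqrt{C_d}\,\|p\|_2 .
\]
For $d=2$ this works with two classes, since $\alpha_1+\alpha_2=n$ forces $\alpha_1\ge k$ or $\alpha_2\ge n-k$. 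The classes are then "divisible by $z_1^k$" and the rest, which is divisible by $z_2^{n-k}$; this gives $C_2=2=\binom{2}{1}$. For general $d$ I would proceed by induction on $d$. If $\alpha_1\ge k$ or $\alpha_1\ge n-k$, peel off $z_1^k$ or $z_1^{n-k}$. Otherwise apply the $(d-1)$-variable scheme to $(\alpha_2,\dots,\alpha_d)$ with the adjusted degrees $k-\alpha_1$ and $n-k-\alpha_1$.

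The difficulty is that the adjusted degrees depend on $\alpha_1$, so the number of fixed monomials seems to grow with $n$. I would fix this by choosing the fixed factors as "staircase" monomials $z_1^{\alpha_1}\cdots z_{j-1}^{\alpha_{j-1}}z_j^{t}$ only up to the variable $j$ where the cut at position $k$ (or $n-k$) falls in the word $z_1^{\alpha_1}\cdots z_d^{\alpha_d}$. I would then count the resulting types of decompositions. This count is a lattice-path count which I expect to give $\binom{2d-2}{d-1}$ classes. The remaining delicate point is to show that within each type the factorization $p_r=\sum u\,v$ still costs at most $\|p_r\|_2$, using that distinct cut patterns contribute orthogonal pieces.
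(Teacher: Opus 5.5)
Your proofs of $\|p\|_2\le\vertiii{p}_1$ and of $\vertiii{p}_1\le\vertiii{p}_2\le\sqrt d\,\vertiii{p}_1$ are correct and essentially match the paper's. The only difference is that the paper splits the second factor where you split the first, and it obtains $\vertiii{p}_1\le\vertiii{p}_2$ from $\|p\|_{P_k\odot P_l}\le\|p\|_{Z\odot P_k\odot P_{l-1}}$ plus symmetry. Your reduction of the upper bound is also sound: a partition of the length-$n$ multi-indices into at most $C_d$ classes, each with a common monomial factor of degree $k$ or $n-k$, gives $\|p\|_{P_k\odot P_{n-k}}\le\sqrt{C_d}\|p\|_2$ by orthogonality and Cauchy--Schwarz.

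The gap is that you never construct such a partition for $d\ge3$, and that construction is the whole content of the upper bound. Your induction breaks down, as you note, and the staircase repair does not fix it. The monomials $z_1^{\alpha_1}\cdots z_{j-1}^{\alpha_{j-1}}z_j^t$ vary with $\alpha$, so each ``type'' contains unboundedly many different factors. The ``delicate point'' you defer is therefore the real difficulty, not a detail. Orthogonality of pieces with different factors only yields $\|\sum_i p_i\|_{P_k\odot P_{n-k}}\le\sum_i\|p_i\|_2$, an $\ell^1$-sum whose length grows with $n$. The only mechanism in your scheme that produces an $\ell^2$-bound is a single common factor per class. Even for $d=2$, prefix cuts give $k$ distinct factors $z_1^{\alpha_1}z_2^{k-\alpha_1}$; your two-class argument works only because you switch to the suffix $z_2^{n-k}$.

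The missing idea is to take common factors $z^\beta$ of degree $|\beta|\ge n/2$, independently of $k$. Since $|\beta|\ge\min(k,n-k)$, a fixed sub-monomial of $z^\beta$ of degree $k$ or $n-k$ divides the whole class; this is Lemma \ref{lem:factor_small_norm} in the paper. One then needs a set $G$ of at most $\binom{2d-2}{d-1}$ multi-indices with $|\beta|\ge n/2$ such that every $\alpha$ with $|\alpha|=n$ dominates some $\beta\in G$.

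The paper gets $G$ (Lemma \ref{lem:combinatorics_multi}) by covering the simplex $\{t\ge 0,\ \sum_j t_j=1\}$ with $\binom{2d-2}{d-1}$ translates of its half, citing \cite{XLZ21}, and then rounding $ns$ up. You can also do it directly for $d\ge2$:
\begin{itemize}
\item Put $N=2d-2$. The numbers $N\alpha_j/n$ sum to the integer $N$, so their integer parts sum to at least $N-d+1=d-1$.
\item Hence there is $a\in\mathbb{N}_0^d$ with $|a|=d-1$ and $a_j\le N\alpha_j/n$ for all $j$.
\item Then $\beta=(\lceil na_j/N\rceil)_j$ satisfies $\beta\le\alpha$ and $|\beta|\ge n/2$.
\item There are exactly $\binom{2d-2}{d-1}$ choices of $a$.
\end{itemize}
Assigning each $\alpha$ to one such $\beta$ gives exactly the partition your orthogonality/Cauchy--Schwarz argument needs, for every $k$ at once.
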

We list three more methods (Methods 3-5) that may potentially be useful. However, the resulting lower bounds do not have clean expressions. 

Our convex descriptions for the Schur--Agler norms and dual Schur--Agler norms also allow one to numerically approximate these norms using semidefinite programming. In fact, many of the results above are inspired by such numerical experiments. 

This paper is organized as follows. In Section \ref{sec: pre}, we review some basic definitions and results used in this paper. In Section \ref{sec: conv descrip of SA norm}, we explain in more detail the observations mentioned previously and give a convex description of the Schur--Agler norm, a reduced description for homogeneous polynomials, and a parallel description for the supremum norm. In Section \ref{sec: new proofs}, we give new proofs of two known results. Namely, the result of testing von Neumann's inequality on diagonalizable tuples or nilpotent tuples, and the existence of Agler decompositions. In Section \ref{sec: dual SA}, we prove Theorem \ref{thm: intro dual_norm_general} and the corresponding results for homogeneous polynomials and the supremum norm. We also prove some basic properties for the predual Schur--Agler space $(\SA_d)_*$. In Section \ref{sec: methods of constructing L}, we provide 5 methods of constructing $L\in\LLL_c$ using the Hankel operators. The first two methods give the weak product lower bounds for the Schur--Agler norm in Theorem \ref{thm: intro norm 1 and 2}. For Kaijser-Varopoulos-Holbrook type polynomials, i.e., the polynomials
\[
p_t(z)=\sum_{i=1}^d z_i^2+\frac{t}{2}\sum_{i\neq j}z_iz_j,\quad t\in\CC,
\]
we explicitly compute their Schur--Agler norms and dual Schur--Agler norms. In Section \ref{sec: weak product norm}, we prove Theorem \ref{thm:intro three_norm_two_norm}. In Section \ref{sec: SDP}, we explain the numerical methods of approximating $\|\cdot\|_{\SArm}$ and $\|\cdot\|_\ast$ using semidefinite programming. In the Appendix, we compute the $L$ operators for the counterexamples in \cite{CrDa75}\cite{Dix76}\cite{Hol01}\cite{Varo74} and explain how they work.

~

\noindent{\bf Acknowledgments:} The authors would like to thank Catalin Badea, Joseph Ball, Chunlan Jiang, Greg Knese, Dexie Lin, Orr Shalit and Yijun Yao for helpful discussions.

AI tools such as ChatGPT and DeepSeek were used 
to perform literature searches, write code for numerical experiments, 
and perform some preliminary calculations. The final article is entirely 
human generated.

\section{Preliminaries}\label{sec: pre}
In this section, we briefly review some basics about the Hardy space, Hankel operators, weak products, and the Schur--Agler norm.

\subsection{Hardy Space $H^2(\DD^d)$}

For $d\in\NN$, the Hardy space $H^2(\DD^d)$ is the space of holomorphic functions on $\DD^d$ with square summable Taylor coefficients. That is,
\[
H^2(\DD^d):=\left\{f(z)=\sum_{\alpha\in\NN_0^d}a_\alpha z^\alpha~:~\|f\|_2^2=\sum_{\alpha\in\NN_0^d}|a_\alpha|^2<\infty\right\}.
\]
Background on $H^2(\mathbb{D}^d)$ can be found in \cite{RudinPolydisc69}. It can be shown that for any $f\in H^2(\DD^d)$, the radial limit $f^\ast(z):=\lim_{r\to1^-}f(rz)$ converges for almost all $z\in\TT^d$, and the mapping $f\mapsto f^\ast$ gives an isometric embedding into $L^2(\TT^d)$. Therefore it is conventional to identify $f$ with $f^\ast$, and to identify $H^2(\DD^d)$ with a subspace of $L^2(\TT^d)$. Recall that the functions $\{z^\alpha\}_{\alpha\in\ZZ^d}$ form an orthonormal basis of $L^2(\TT^d)$. Then $H^2(\DD^d)$ is the closed linear span of $\{z^\alpha\}_{\alpha\in\NN_0^d}$. It is well known that $H^2(\DD^d)$ is a reproducing kernel Hilbert space on $\DD^d$ with reproducing kernels given by
    \[
    K_z(w)=\prod_{i=1}^d\frac{1}{1-w_i\bz_i}.
    \]
    This means $f(z)=\la f, K_z\ra_2$ for any $f\in H^2(\DD^d)$ and any $z\in\DD^d$. Then $\|K_z\|^2=\prod_{i=1}^d\frac{1}{1-|z_i|^2}$. Denote by $k_z=\frac{K_z}{\|K_z\|}$ the normalized reproducing kernel at $z$. Let $P$ be the Szeg\H{o} projection, i.e., the orthogonal projection from $L^2(\TT^d)$ onto $H^2(\DD^d)$.
    For $\phi\in L^\infty(\TT^d)$, the (small) Hankel operator $\Gamma_\phi$ is defined by
    \[
    \Gamma_\phi: H^2(\DD^d)\to H^2(\DD^d),\quad\Gamma_\phi(f)=P(\phi \cdot Uf),
    \]
    where $Uf(z)=f(\bz)$. Equivalently, $\la\Gamma_\phi f,g\ra_2=\la\phi,\widehat{f}g\ra_2$, where 
    \begin{equation}\label{eqn: f hat}
    \widehat{f}(z)=\overline{f(\bz)}.
    \end{equation}
 The following well-known identity is crucial to our proofs: for any $\phi\in L^\infty(\TT^d)$ and $i=1,\cdots, d$,
 \begin{equation}\label{eqn: Hankel Mz cross commute}
 M_{z_i}^\ast\Gamma_\phi=\Gamma_\phi M_{z_i}.
 \end{equation}

\subsection{Weak products}\label{subsec: weak products}
Recall that $P_{d,n}$ is the space of homogeneous analytic polynomials in $d$ variables of degree $n$. Let $Z_d=\{z_1,\cdots,z_d\}$. For $k, l\in\NN_0$, the following weak product norms will be used in Sections \ref{sec: methods of constructing L} and \ref{sec: weak product norm}.
\[
  \|p\|_{P_{d,k}\odot P_{d,l}}:=\inf\left\{\sum_{j=1}^m\|f_j\|_2\cdot\|g_j\|_2~:~f_j\in P_{d,k},~g_j\in P_{d,l},~\text{and}~p=\sum_{j=1}^m f_jg_j\right\},\quad\forall p\in P_{d,k+l};
\]
and
\[
  \|p\|_{Z_d\odot P_{d,k}\odot P_{d,l}}=\inf\left\{\sum_{i=1}^d\|f_i\|_{P_{d,k}\odot P_{d,l}}~:~f_i\in P_{d,k+l},\text{ and }p=\sum_{i=1}^d z_if_i\right\},\quad\forall p\in P_{d,k+l+1}.
\]
It is elementary to check that
\begin{equation} 
  \label{eqn:WP_easy_estimate}
  \|p\|_{P_{d,k} \odot P_{d,l}} \le \|p\|_{Z_d \odot P_{d,k} \odot P_{d,l-1}};
\end{equation}
this can also be seen from the duality below.
For $n\in\NN$, we define two other norms on $P_{d,n}$ by
\[
    \|q\|_{\Han_k} =\left\|\Gamma_q\big|_{P_{d,k}}\right\|,\quad 0\leq k\leq n,
\]
and
\[
\|q\|_{\Han_k}'=\max_{1\leq i\leq d}\left\|\Gamma_qM_{z_i}\big|_{P_{d,k}}\right\|,\quad 0\leq k\leq n-1.
\]
We will use the following standard duality between Hankel and weak product norm.

\begin{lem}
    \label{lem:Hankel_WP}
    \begin{enumerate}
        \item For $n\in\NN$ and $0\leq k\leq n$, the norms $\|\cdot\|_{\Han_k}$
    and $\|\cdot\|_{P_{d,k} \odot P_{d,n-k}}$ on $P_{d,n}$ are dual to each other with respect to the usual Cauchy pairing. That is,
    the maps
    \[
        (P_{d,n}, \|\cdot\|_{\Han_k}) \to (P_{d,n},\|\cdot\|_{P_{d,k} \odot P_{d,n-k}})^*,
        \quad q \mapsto \langle \cdot, q \rangle_2,
    \]
    and
    \[
        (P_{d,n}, \|\cdot\|_{P_{d,k} \odot P_{d,n-k}}) \to (P_{d,n},\|\cdot\|_{\Han_k})^*,
        \quad q \mapsto \langle \cdot, q \rangle_2,
      \]
      are conjugate linear isometric isomorphisms.
        \item For $n\in\NN$ and $0\leq k\leq n-1$, the norms $\|\cdot\|_{\Han_k}'$
    and $\|\cdot\|_{Z_d\odot P_{d,k} \odot P_{d,n-k-1}}$ on $P_{d,n}$ are dual to each other with respect to the usual Cauchy pairing.
    \end{enumerate}
    
\end{lem}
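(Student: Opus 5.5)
The plan is a finite-dimensional Banach space duality argument: identify each norm as a supremum over the unit ball of the other, via the bilinear structure $\langle \Gamma_q f, g\rangle_2 = \langle q, \widehat f g\rangle_2$.

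\textbf{Part (1), the pairing.} For $f\in P_{d,k}$ and $g\in P_{d,n-k}$, the operator $\Gamma_q$ maps $P_{d,k}$ into $P_{d,n-k}$, since $\Gamma_q$ lowers degree from $n$ by $k$. Moreover $f\mapsto \widehat f$ is a conjugate-linear isometry of $P_{d,k}$ onto itself. Hence
\[
\|q\|_{\Han_k}=\sup\{|\langle q,\widehat f g\rangle_2| : f\in P_{d,k},\ g\in P_{d,n-k},\ \|f\|_2=\|g\|_2=1\}
=\sup\{|\langle q, fg\rangle_2| : \|f\|_2=\|g\|_2=1\},
\]
with $f\in P_{d,k}$ and $g\in P_{d,n-k}$ in the last supremum.

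\textbf{Part (1), the two norms.} The weak product norm $\|\cdot\|_{P_{d,k}\odot P_{d,n-k}}$ is, by its definition, the Minkowski functional of the absolutely convex hull $B$ of the set $S=\{fg : \|f\|_2=\|g\|_2=1\}$. This hull is closed because $S$ is compact in finite dimensions. It is a genuine norm because products $z^\alpha z^\beta$ span $P_{d,n}$, so $B$ is absorbing. The dual norm of a Minkowski functional of $\mathrm{absconv}(S)$ is $\sup_{s\in S}|\langle s,q\rangle|$, and by the display above this equals $\|q\|_{\Han_k}$. This gives the first isometric isomorphism, since $q\mapsto\langle\cdot,q\rangle_2$ is a conjugate-linear bijection $P_{d,n}\to P_{d,n}^*$. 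The second follows from reflexivity, i.e.\ the bipolar theorem in finite dimensions: the dual of the dual norm is the original norm.

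\textbf{Part (2).} Apply the same argument to $\Gamma_q M_{z_i}|_{P_{d,k}}$. We have $\langle \Gamma_q M_{z_i} f, g\rangle_2=\langle q, z_i \widehat f g\rangle_2$, so $\max_i\|\Gamma_q M_{z_i}|_{P_{d,k}}\|=\sup\{|\langle q, z_i fg\rangle_2|\}$, the supremum taken over $i$ and over unit vectors $f\in P_{d,k}$, $g\in P_{d,n-k-1}$. The unit ball of $\|\cdot\|_{Z_d\odot P_{d,k}\odot P_{d,n-k-1}}$ should be the absolutely convex hull of $S'=\{z_i fg\}$. To confirm this, note that $\sum_i \|f_i\|_{P_k\odot P_{n-k-1}}\le 1$ with $p=\sum z_i f_i$ writes $p$ as a convex combination of elements of $z_i\cdot\mathrm{absconv}(S)\subseteq \mathrm{absconv}(S')$. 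Conversely, any absolutely convex combination of elements of $S'$ can be grouped by $i$, which bounds the norm by $1$.

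The only point needing care is this identification of unit balls, including that the infimum is attained or at least that closure is harmless. That is automatic in finite dimensions by compactness, so I expect no real obstacle. The dual norm computation is then identical to Part (1).
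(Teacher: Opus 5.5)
Your argument is correct and is essentially the paper's proof. Identifying $\|\cdot\|_{P_{d,k}\odot P_{d,n-k}}$ as the gauge of $\mathrm{absconv}\{fg\}$ and computing its dual norm as $\sup|\langle q,fg\rangle_2|=\|\Gamma_q|_{P_{d,k}}\|$ is the same two-sided estimate the paper writes out by hand, and both obtain the second map from finite-dimensional reflexivity. The only difference is in part (2): the paper reduces to part (1) via $\Gamma_q M_{z_i}=\Gamma_{M_{z_i}^*q}$, whereas you identify the unit ball directly as $\mathrm{absconv}\{z_i fg\}$, which works equally well.
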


\begin{proof}
  (1) 
  Since $P_{d,k}$ is finite dimensional, it suffices to show that
  the first map is an isometry. For ease of notation, we write $\langle \cdot,\cdot \rangle_2 = \langle \cdot,\cdot \rangle$.

  Let $q \in P_{d,n}$ and let $h = \sum_j f_j g_j \in P_{d,k} \odot P_{d,n-k}$ with $f_j \in P_{d,k}$ and $g_j \in P_{d,n-k}$.
  Then
  \begin{align*}
    |\langle h, q \rangle| = \Big| \sum_j \langle q, f_j g_j \rangle \Big|
    = \Big| \sum_j \langle \Gamma_q \widehat{f}_j,  g_j \rangle \Big|
    &\le \sum_j \|\Gamma_q \widehat{f}_j\|_2 \|g_j\|_2 \\
    &\le \|q\|_{\Han_k} \sum_j \|f_j\|_2 \|g_j\|_2.
  \end{align*}
  Taking the infimum over all such representations of $h$, we obtain
  \begin{equation*}
    |\langle h, q \rangle| \le \|q\|_{\Han_k} \|h\|_{P_{d,k} \odot P_{d,n-k}}.
  \end{equation*}
  Conversely, since $\Gamma_q$ maps $P_{d,k}$ into ${P_{d,n-k}}$, we have
  \begin{align*}
    \|q\|_{\Han_k} &= \sup \{ |\langle \Gamma_q \widehat{f}, g \rangle|: f \in P_{d,k}, g \in P_{d,n-k}, \|f\|_2, \|g\|_2 \le 1\} \\
                   &= \sup \{ |\langle q, fg \rangle|: f \in P_{d,k}, g \in P_{d,n-k}, \|f\|_2, \|g\|_2 \le 1\} \\
                   &\le \sup \{ |\langle h, q \rangle|: h \in P_{d,k} \odot P_{d,n-k}, \|h\|_{P_{d,k} \odot P_{d,n-k}} \le 1\}.
  \end{align*}
  This shows that
  \begin{equation*}
    \|q\|_{\Han_k} = \sup \{ |\langle h, q \rangle|: h \in P_{d,k} \odot P_{d,n-k}, \|h\|_{P_{d,k} \odot P_{d,n-k}} \le 1\}.
  \end{equation*}

  (2) Let $q \in P_{d,n}$ and let $h = \sum_i z_i f_i \in P_{d,n}$. Using the fact that $\Gamma_q M_{z_i} = \Gamma_{M_{z_i}^* q}$
  and (1), we find that
  \begin{align*}
    |\langle h,q \rangle| \le \sum_i | \langle z_i f_i,q \rangle|
    &= \sum_i | \langle f_i, M_{z_i}^* q \rangle|
    \le \sum_i \|f_i\|_{P_{d,k} \odot P_{d,n-k-1}} \|\Gamma_{M_{z_i}^* q} \big|_{P_{d,k}} \| \\
    &\le \|q\|_{\Han_k}' \sum_i \|f_i\|_{P_{d,k} \odot P_{d,n-k-1}}.
  \end{align*}
  Hence
  \begin{equation*}
    | \langle h,q \rangle| \le \|q\|_{\Han_k}' \|h\|_{Z_d \odot P_{d,k} \odot P_{d,n-k-1}}.
  \end{equation*}
  On the other hand,
  \begin{align*}
    \Big\| \Gamma_q M_{z_i} \Big|_{P_{d,k}} \Big\|
    &= \sup \{ | \langle f, M_{z_i}^* q \rangle|: f \in P_{d,k} \odot P_{d,n-k-1}, \|f\|_{P_{d,k} \odot P_{d,n-k-1}} \le 1 \} \\
    &= \sup \{ | \langle z_i f, q \rangle|: f \in P_{d,k} \odot P_{d,n-k-1}, \|f\|_{P_{d,k} \odot P_{d,n-k-1}} \le 1 \} \\
    &\le \sup \{ | \langle h, q \rangle|: h \in Z_d \odot  P_{d,k} \odot P_{d,n-k-1}, \|h\|_{Z_d \odot P_{d,k} \odot P_{d,n-k-1}} \le 1 \}.
  \end{align*}
  This shows that
  \begin{equation*}
    \|q\|_{\Han_k}' = \sup \{ | \langle h,q \rangle| : h \in Z_d \odot P_{d,k} \odot P_{d,n-k-1}, \|h\|_{Z_d \odot P_{d,k} \odot P_{d,n-k-1}} \le 1 \}. \qedhere
  \end{equation*}
\end{proof}

\begin{rem}
  It is known that weak product spaces can be regarded
  as quotients of the space of trace class operators; see for instance \cite[Section 2]{AHM+18}.
  In our case, this can be made very explicit, and this can be used
  to compute upper bounds of the weak product norm. Explicitly, let $p \in P_{d,n}$, say $p(z) =  \sum_{|\gamma| = n} c_\gamma z^\gamma$, and let $0 \le k \le n$.
  Let us order the monomials $z^\alpha$ of degree $k$ in some way and write $[z^\alpha]_{|\alpha| = k}$ for the resulting column
  vector. Then we can represent
  \begin{equation*}
    p(z) =
    \begin{bmatrix}
      z^\alpha
    \end{bmatrix}_{|\alpha|=k}^T A
    \begin{bmatrix}
      z^\beta
    \end{bmatrix}_{|\beta|=n-k},
  \end{equation*}
  where $A = [a_{\alpha,\beta}]$ is a scalar matrix of the appropriate size. Note that a scalar
  matrix $A$ represents $p$ in this way if and only if $c_\gamma = \sum_{\alpha + \beta = \gamma} a_{\alpha \beta}$
  for all $\gamma$.

  If $A = \sum_{j} v_j  w_j^T$ is a decomposition of $A$ into rank one matrices,
  then we obtain a weak factorization $p = \sum_{j} f_j g_j$, where
  \begin{equation*}
    f_j =
    \begin{bmatrix}
      z^\alpha
    \end{bmatrix}^T v_j
    \quad \text{ and } \quad
    g_j = w_j^T
    \begin{bmatrix}
      z^\beta
    \end{bmatrix}.
  \end{equation*}
  Since $\|f_j\|_2 = \|v_j\|$ and $\|g_j\|_2 = \|w_j\|$, this shows that
  \begin{equation*}
    \|p\|_{P_k \odot P_{n-k}} \le \|A\|_{\mathrm{nuc}}.
  \end{equation*}
  Conversely, any weak factorization of $p$ yields a nuclear decomposition of some representing matrix $A$ of $p$,
  so $\|p\|_{P_k \odot P_{n-k}}$ is the infimum of the nuclear norms of representing matrices of $p$.
  This also shows that in the definition of the weak product norm on $P_{k} \odot P_{n-k}$,
  it suffices to consider weak factorizations of length $m \le \min( \dim(P_k),\dim(P_{n-k}))$.
\end{rem}

\subsection{Schur--Agler space}
For a polynomial $p\in\polyn$, its Schur--Agler norm is given by
\[
\|p\|_{\SArm}:=\sup\left\{\|p(\Tbf)\|~:~\Tbf\text{ is a commuting }d\text{ tuple of contractions}\right\}.
\]
More generally, we define the Schur--Agler norm of a holomorphic function $f: \mathbb{D}^d \to \mathbb{C}$
by
\begin{equation*}
  \|f\|_{\SArm} = \sup \{ \|f(\Tbf)\|: \Tbf \text{ is a commuting $d$-tuple of strict contractions} \},
\end{equation*}
where $f(\Tbf)$ is defined (for instance) by plugging $\Tbf$ into the power series expansion of $f$ at $0$.
The Schur--Agler space $\SA_d$ consists of all holomorphic $f$ for which $\|f\|_{\SArm} < \infty$.
Clearly, $\SA_d \subset H^\infty(\mathbb{D}^d)$.

\begin{lem}
  \label{lem:SA_approx}
  Let $f \in \SA_d$ and let $p_n$ be the $n$-th Fej\'er mean of $f$. Then $\|p_n\|_{\SArm} \le \|f\|_{\SArm}$,
  for all $n \in \mathbb{N}$ and $p_n \to f$ in $H^2(\mathbb{D}^d)$ as $n \to \infty$.
\end{lem}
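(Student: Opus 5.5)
The plan is to write the Fejér mean as an average of rotated copies of $f$ and then use the fact that the Schur--Agler norm is rotation invariant and convex. The $n$-th Fejér mean in $d$ variables is
\[
p_n(z)=\sum_{\alpha\in\NN_0^d,\ |\alpha_i|\le n}\prod_{i=1}^d\Big(1-\frac{\alpha_i}{n+1}\Big)\hat f(\alpha)z^\alpha .
\]
It can be written as
\[
p_n(z)=\int_{\TT^d}F_n(\zeta)\,f(\zeta_1z_1,\dots,\zeta_dz_d)\,\intd m(\zeta),
\]
where $F_n(\zeta)=\prod_i F_n^{(1)}(\zeta_i)$ is the product Fejér kernel. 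This kernel is a nonnegative trigonometric polynomial with $\int F_n\,\intd m=1$. The identity is checked coefficientwise: integrating $\zeta^\alpha$ against $F_n$ gives exactly the Fejér multiplier $\prod_i(1-\alpha_i/(n+1))_+$.

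Next, fix a commuting tuple $\Tbf$ of strict contractions with $\max_i\|T_i\|\le r<1$. The power series of $f$ converges absolutely in operator norm at $\Tbf$, uniformly at all rotated tuples $\zeta\Tbf=(\zeta_1T_1,\dots,\zeta_dT_d)$. This holds because $f$ is bounded on $\DD^d$, so Cauchy estimates give $|\hat f(\alpha)|\le\|f\|_\infty$, and $\sum_\alpha r^{|\alpha|}<\infty$. We can therefore interchange the sum and the integral, which gives
\[
p_n(\Tbf)=\int_{\TT^d}F_n(\zeta)\,f(\zeta\Tbf)\,\intd m(\zeta),
\]
as a norm-convergent Bochner (or even Riemann) integral of a continuous operator-valued function. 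Since $\zeta\Tbf$ is again a commuting tuple of strict contractions, $\|f(\zeta\Tbf)\|\le\|f\|_{\SArm}$. Combined with $F_n\ge0$ and $\int F_n=1$, this gives $\|p_n(\Tbf)\|\le\|f\|_{\SArm}$. Taking the supremum over $\Tbf$ yields $\|p_n\|_{\SArm}\le\|f\|_{\SArm}$.

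For convergence, $f\in\SA_d\subset H^\infty(\DD^d)\subset H^2(\DD^d)$. In Taylor coefficients,
\[
\|f-p_n\|_2^2=\sum_\alpha\Big|1-\prod_i\Big(1-\frac{\alpha_i}{n+1}\Big)_+\Big|^2|\hat f(\alpha)|^2 .
\]
Each term tends to $0$ and is dominated by $|\hat f(\alpha)|^2$, which is summable. Dominated convergence therefore gives $p_n\to f$ in $H^2(\DD^d)$.

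The only step needing real care is justifying the operator integral identity, namely the interchange of summation and integration. Absolute convergence from the coefficient bound and the spectral radius (indeed norm) bound $r<1$ handles it. Everything else is routine.
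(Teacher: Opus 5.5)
Your proof is correct and follows the same idea as the paper. In both, $p_n$ is written as an average of rotations of $f$ against a nonnegative Fej\'er kernel and transferred to $\Tbf$ as a norm-convergent integral; the triangle inequality with $\|f(\zeta\Tbf)\|\le\|f\|_{\SArm}$ then gives the bound, and $H^2$ convergence is read off from the coefficient multipliers. The only difference is that the paper uses the one-variable kernel with the diagonal rotation $f(\bar\lambda z)$, so that $p_n=\sum_{k\le n}(1-\frac{k}{n+1})f_k$ is the Ces\`aro mean of the homogeneous expansion. You use the product kernel on $\TT^d$ instead, and your argument applies verbatim to the paper's version.
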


\begin{proof}
  This follows from standard properties of the Fej\'er kernel; see for instance
  \cite[Section 1.2]{Katznelson04}. Indeed,
  let $F_n(z) = \sum_{k=-n}^n (1 - \frac{|k|}{n+1}) z^k$ be the Fej\'er kernel in one variable. Thus,
  if $f \in \SA_d$ with homogeneous decomposition $f = \sum_{k=0}^\infty f_k$, then
  \[
    p_n(z) = \sum_{k=0}^n \Big( 1 - \frac{k}{n+1} \Big) f_k (z) = \int_{\mathbb T} f(z \bar \lambda) F_n(\lambda) d \sigma(\lambda). 
  \]
  The first representation of $p_n$ easily implies that $p_n \to f$ in $H^2(\mathbb D^d)$ as $n \to \infty$. The integral in the second representation converges uniformly on compact subsets of $\mathbb D^d$ in $z$, so if $\Tbf$ is a tuple of strict commuting contractions, then
  \[
    p_n(T) = \int_{\mathbb T} f( \bar \lambda \Tbf) F_n(\lambda) d \sigma(\lambda),
  \]
and the integral converges in operator norm.
Since $\|f(\bar \lambda \Tbf)\| \le \|f\|_{\SArm}$ for all $\lambda \in \mathbb T$,
we can use positivity of the Fej\'er kernel and the triangle inequality
to conclude that
\[
    \|p_n(T)\| \le \int_{\mathbb T} \|f\|_{\SArm} F_n(\lambda) d \sigma(\lambda) = \|f\|_{\SArm}. \qedhere
\]
\end{proof}

\section{A Convex Description of the Schur--Agler Norm}\label{sec: conv descrip of SA norm}

In this section, we give a description of the Schur--Agler norm using a cone of positive operators on $H^2(\DD^d)$. For homogeneous polynomials, we reduce the expression to a smaller cone. We also give a parallel description for the supremum norm. The results in this section form the foundation for subsequent sections.
\begin{construction}\label{defn: cyclic commuting tuple}
Let $\Tbf=(T_1,\cdots,T_d)$ be a commuting $d$-tuple of operators on a Hilbert space $\HHH$. Recall that a vector $\xi\in\HHH$ is called a cyclic vector for $\Tbf$ if $\{p(\Tbf)\xi~:~p\in\polyn\}$ is dense in $\HHH$. Let us call $(\Tbf,\xi)$ a cyclic commuting ($d$-)tuple. We say that two cyclic commuting tuples $(\Tbf,\xi)$ and $(\Tbf',\xi')$ are unitarily equivalent if there is a unitary operator $U$ such that $UT_i=T'_iU$ and $U\xi=\xi'$. 
For a cyclic commuting tuple $(\Tbf,\xi)$, define the semi-inner product $\la\cdot,\cdot\ra_{\Tbf,\xi}$ on $\polyn$ by
\[
\la p, q\ra_{\Tbf,\xi}=\la p(\Tbf)\xi, q(\Tbf)\xi\ra,\quad\forall p, q\in\polyn.
\]
Let $\III=\{p\in\polyn~:~\la p, p\ra_{\Tbf,\xi}=0\}=\{p\in\polyn~:~p(\Tbf)=0\}$, equip $\polyn/\III$ with the inner product induced by $\la\cdot,\cdot\ra_{\Tbf,\xi}$, and denote by $\HHH'$ its completion. Define
\[
T_i':\HHH'\to\HHH',\quad\text{determined by}\quad p+\III\mapsto z_ip+\III,\quad\forall p\in\polyn,~\forall i=1,\cdots,d.
\]
Take $\xi'=1+\III$. Then it is elementary to check that $(\Tbf,\xi)$ is unitarily equivalent to $(\Tbf',\xi')$. In other words, one can reconstruct the tuple $(\Tbf,\xi)$ using the semi-inner product $\la\cdot,\cdot\ra_{\Tbf,\xi}$. 
\end{construction}

In this paper, we take one step further in the construction above. The idea is to express the semi-inner product $\la\cdot,\cdot\ra_{\Tbf,\xi}$ as a positive operator on an analytic function space containing $\polyn$, and to take advantage of the operator and function theoretic tools on this space. 
\begin{construction}\label{defn: L operator}
Suppose $(\Tbf,\xi)$ is a cyclic commuting $d$-tuple and assume that there exists a constant $C \ge 0$
such that $\|p(T) \xi\| \le C \|p\|_2$ for all $p \in \mathbb{C}[\mathbf{z}]$.
Then the following equation determines a bounded linear operator $L_{\Tbf,\xi}$ on the Hardy space $H^2(\DD^d)$:
\[
\la L_{\Tbf,\xi}p, q\ra_2=\la p(\Tbf)\xi, q(\Tbf)\xi\ra_\HHH,\quad\forall p, q\in \polyn.
\]
For example, this holds when each $T_i$ is a strict contraction, in which case we actually have
\[
\la L_{\Tbf,\xi} f,g\ra_2=\la f(\Tbf)\xi, g(\Tbf)\xi\ra_\HHH,\quad\forall f, g\in H^2(\DD^d).
\]
Let
\[
\LLL_c=\left\{L\in\BBB(H^2(\DD^d))~:~L\geq0,~M_{z_i}^\ast LM_{z_i}\leq L,~i=1,\cdots,d\right\}.
\]
If $(\Tbf,\xi)$ is a cyclic commuting tuple of contractions (and if $L_{\Tbf,\xi}$ is bounded), then it is elementary to check that $L_{\Tbf,\xi}\in\LLL_c$.

Conversely, given $L\in\LLL_c,~L\neq0$, define a semi-inner product by 
\[
\la p, q\ra_L=\la Lp, q\ra,\quad\forall p, q\in\polyn.
\]
Then Construction \ref{defn: cyclic commuting tuple} yields a cyclic commuting tuple of contractions $(\Tbf,\xi)$ with $L=L_{\Tbf,\xi}$, so
\[
    \la L p,q \ra_2 = \la p(\Tbf) \xi, q(\Tbf) \xi \ra, \quad \forall p,q \in \polyn.
\]
Alternatively, write $L=A^*A$ and let
\[
T_i: \overline{\mathrm{Range}A}\to\overline{\mathrm{Range}A},\quad\text{determined by}\quad A(f)\mapsto A(z_if),\quad\forall f\in H^2(\DD^d).
\]
Then it is easy to verify that $\Tbf$ is a commuting tuple of contractions with cyclic vector $\xi=A1$, and $L=L_{\Tbf,\xi}$. Up to unitary equivalence, the tuple $(\Tbf,\xi)$ is unique. Let us call it the cyclic commuting tuple associated with $L$.
\end{construction}

The following key lemma will be used throughout this paper.
\begin{lem}\label{lem: SA norm with Lc}
For any $f\in H^2(\mathbb{D}^d)$,
\[
\|f\|_{\SArm}^2=\sup\left\{\frac{\la L f, f\ra_2}{\la L1,1\ra_2}~:~L\in\LLL_c, L \neq 0  \right\},
\]
where both sides of the equality are allowed to be infinite.
\end{lem}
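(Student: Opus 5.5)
We prove the two inequalities separately, writing $S$ for the supremum on the right-hand side.

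\emph{Step 1: $\|f\|_{\SArm}^2\le S$.} Let $\Tbf$ be a commuting tuple of strict contractions on $\HHH$. Since $f\in H^2(\DD^d)$ and $\|T_i\|<1$, the series $f(\Tbf)=\sum_\alpha a_\alpha \Tbf^\alpha$ converges in norm. Fix a unit vector $\xi\in\HHH$ and restrict $\Tbf$ to the closed invariant subspace generated by $\xi$. This gives a cyclic commuting tuple $(\Tbf',\xi)$ of strict contractions with $f(\Tbf)\xi=f(\Tbf')\xi$.

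By Construction~\ref{defn: L operator}, the operator $L=L_{\Tbf',\xi}$ is bounded, lies in $\LLL_c$, and satisfies $\la Lg,h\ra_2=\la g(\Tbf')\xi,h(\Tbf')\xi\ra$ for all $g,h\in H^2(\DD^d)$. Boundedness holds because
\[
\|p(\Tbf)\xi\|\le\sum_\alpha|a_\alpha|\,\|\Tbf^\alpha\|\le \|p\|_2\Big(\sum_\alpha r^{2|\alpha|}\Big)^{1/2},
\]
where $r=\max_i\|T_i\|$. Taking $g=h=1$ gives $\la L1,1\ra_2=\|\xi\|^2=1$, so $L\ne0$. Taking $g=h=f$ then gives
\[
\|f(\Tbf)\xi\|^2=\frac{\la Lf,f\ra_2}{\la L1,1\ra_2}\le S.
\]
Taking the supremum over $\xi$ and over $\Tbf$ yields $\|f\|_{\SArm}^2\le S$.

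\emph{Step 2: $S\le\|f\|_{\SArm}^2$.} Let $L\in\LLL_c$ with $L\neq0$. We may assume $\la L1,1\ra_2>0$, and we treat the degenerate case separately below.

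Let $(\Tbf,\xi)$ be the cyclic tuple of contractions associated with $L$. The difficulty is that the $T_i$ are contractions rather than strict contractions, and $f$ is not a polynomial. To get around this, we dilate by $0<r<1$: the tuple $r\Tbf$ consists of strict contractions, so $\|f(r\Tbf)\xi\|^2\le\|f\|_{\SArm}^2\|\xi\|^2$.

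We now express this in terms of $L$. Write $f_r(z)=f(rz)$, and use the form $L=A^*A$ with $T_i A=AM_{z_i}$. Then $p(\Tbf)\xi=Ap$ for every polynomial $p$. Since the Taylor partial sums of $f_r$ converge in $H^2$ and $A$ is bounded, while the same partial sums evaluated at $r\Tbf$ converge in norm to $f(r\Tbf)$, we obtain $f(r\Tbf)\xi=Af_r$. Hence
\[
\la Lf_r,f_r\ra_2\le \|f\|_{\SArm}^2\la L1,1\ra_2 .
\]
Letting $r\to1$, we have $f_r\to f$ in $H^2$, and $L$ is bounded, so $\la Lf,f\ra_2\le\|f\|_{\SArm}^2\la L1,1\ra_2$.

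\emph{Degenerate case.} If $\la L1,1\ra_2=0$, then $\xi=A1=0$. Since $\xi$ is cyclic for $\Tbf$, it follows that $A=0$, hence $L=0$, which is excluded. So the quotient is always well defined.

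\emph{Main obstacle.} The only delicate step is the identity $f(r\Tbf)\xi=Af_r$ and the limit $r\to1$, which require the boundedness of $L$ on all of $H^2$ rather than just on polynomials. Both are handled by the boundedness of $A$ together with $H^2$-convergence. All statements remain valid when either side is infinite.
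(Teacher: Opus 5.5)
Your proof is correct and is essentially the paper's argument. For ``$\le$'' you pass to cyclic tuples of strict contractions and use $L_{\Tbf,\xi}$. For ``$\ge$'' your rescaling to $r\Tbf$ with $\langle Lf_r,f_r\rangle_2\to\langle Lf,f\rangle_2$ is the paper's $L_r=D_rLD_r$ in disguise: $\langle L_rf,f\rangle_2=\langle Lf_r,f_r\rangle_2$, and the tuple associated with $L_r$ is $(r\Tbf,\xi)$ up to unitary equivalence. Your degenerate-case remark, that $\langle L1,1\rangle_2=0$ forces $L=0$, is also the same as the paper's.
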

\begin{proof}
First, note that if $L \in \mathcal{L}_c$ with $\langle L 1,1 \rangle_2 = 0$, then the commuting tuple associated with $L$ is zero, hence $L = 0$. So the right hand side is well-defined. Let $f \in H^2(\DD^d)$. Let $L \in \mathcal{L}_c$. For $0<r<1$, define
\[
D_r: H^2(\DD^d)\to H^2(\DD^d),\quad D_r(h)=h_r,\quad\text{where }h_r(z)=h(rz).
\]
Write $L_r=D_rLD_r$. Then  
\[
L_r\in\LLL_c,\quad M_{z_i}^\ast L_r M_{z_i} \leq r^2 L_r,
\]
and $L_r\to L$ in the strong operator topology as $r\to 1^-$.
Let $(\Tbf,\xi)$ be the cyclic commuting tuple associated with $L_r$. Then $\|T_i\|\leq r$, and
\[
\|f\|_{\SArm}^2\geq\|f(\Tbf)\|^2\geq\frac{\|f(\Tbf)\xi\|^2}{\|\xi\|^2}=\frac{\la L_rf,f\ra}{\la L_r1,1\ra}\to\frac{\la Lf,f\ra}{\la L1,1\ra},\quad r\to1^-.
\]
This proves the inequality ``$\geq$'' in the statement.

 Conversely, note that
  \begin{equation*}
    \|f\|_{\SArm} = \sup \Big\{ \frac{\|f(\Tbf)\xi\|}{\|\xi\|}: (\Tbf,\xi) \text{ cyclic commuting $d$-tuple of strict contractions} \Big\}.
  \end{equation*}
  Let $(\Tbf,\xi)$ be a cyclic commuting $d$-tuple of strict contractions and let $L_{\Tbf,\xi} \in \mathcal{L}_c$ be the associated
  operator. Then
  \begin{equation*}
    \frac{\|f(T) \xi\|^2}{\|\xi\|^2} = \frac{\langle Lf,f \rangle_2 }{\langle L1,1 \rangle_2}
  \end{equation*}
 This proves the inequality ``$\le$'' in the statement.
\end{proof}

Let $P_n = P_{d,n}$ be the space of homogeneous polynomials of degree $n$ in $d$ variables.
For homogeneous polynomials, Lemma \ref{lem: SA norm with Lc} can be improved in the sense that the cone $\mathcal{L}_c$
can be replaced by a finite dimensional cone.
We define
\begin{equation*}
  \mathcal{L}_c^{(n)} = \left\{ L \in \mathcal{L}_c: L = \sum_{k=0}^n L_k, \text{ where } L_k \in \mathcal{B}(P_k) \right\}.
\end{equation*}
In other words, $\LLL_c^{(n)}$ consists of those elements of $\LLL_c$ that reducing each $P_k$ and vanishing on $P_{\le n}^\perp$.
Note that an operator $L = \sum_{k=0}^n L_k$, where each $L_k \in \mathcal{B}(P_k)$ is positive,
belongs to $\LLL_c$ if and only if
\begin{equation*}
  M_{z_i}^* L_k M_{z_i} \le L_{k-1} \quad \text{ for all } k=1,\ldots,n, \quad i=1,\ldots,d.
\end{equation*}

\begin{lem}\label{lem: SA by graded}
For any $p\in P_n$,
\[
  \|p\|_{\SArm}^2=\sup\left\{\frac{\la Lp,p\ra_2}{\la L1,1\ra_2}~:~L\in\LLL_c^{(n)},~L \neq 0\right\}.
\]
\end{lem}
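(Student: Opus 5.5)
Since $\LLL_c^{(n)}\subseteq\LLL_c$, Lemma \ref{lem: SA norm with Lc} gives the inequality ``$\geq$'' at once. For ``$\le$'', I start from an arbitrary $L\in\LLL_c$, $L\neq 0$, and build $\tilde L\in\LLL_c^{(n)}$ with the same ratio,
\[
\frac{\la \tilde L p,p\ra_2}{\la \tilde L1,1\ra_2}=\frac{\la Lp,p\ra_2}{\la L1,1\ra_2}.
\]
Taking the supremum over $L$ and invoking Lemma \ref{lem: SA norm with Lc} then finishes the proof. The construction has two steps: truncate at degree $n$, then average over the circle action so the result becomes block diagonal with respect to the grading.

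\emph{Truncation.} Let $Q$ be the orthogonal projection onto $P_{\le n}=\bigoplus_{k\le n}P_k$, and put $L'=QLQ$. Then $L'\ge 0$, and $\la L'p,p\ra=\la Lp,p\ra$ and $\la L'1,1\ra=\la L1,1\ra$ because $p,1\in P_{\le n}$. To check $M_{z_i}^*L'M_{z_i}\le L'$, take $h\in H^2$ and write $h=h_{\le n}+h_{>n}$. Then
\[
QM_{z_i}h=QM_{z_i}h_{\le n}=M_{z_i}h_{\le n-1},
\]
where $h_{\le n-1}$ is the part of degree $\le n-1$. Hence
\[
\la L'M_{z_i}h,M_{z_i}h\ra=\la LM_{z_i}h_{\le n-1},M_{z_i}h_{\le n-1}\ra\le \la Lh_{\le n-1},h_{\le n-1}\ra .
\]
This last quantity should be compared with $\la L'h,h\ra=\la Lh_{\le n},h_{\le n}\ra$, and the comparison fails in general because of cross terms between degree $n$ and lower degrees. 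So truncation should come \emph{after} averaging.

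\emph{Averaging.} For $\lambda\in\TT$ let $U_\lambda h(z)=h(\lambda z)$; this is unitary on $H^2(\DD^d)$ and satisfies $U_\lambda^*M_{z_i}U_\lambda=\lambda M_{z_i}$. Define
\[
L_{\mathrm{av}}=\int_\TT U_\lambda^*LU_\lambda\,d\sigma(\lambda),
\]
a weak integral. Each $U_\lambda^*LU_\lambda$ lies in $\LLL_c$, since
\[
M_{z_i}^*U_\lambda^*LU_\lambda M_{z_i}=U_\lambda^*(\bar\lambda M_{z_i})^*L(\bar\lambda M_{z_i})U_\lambda\le U_\lambda^*LU_\lambda .
\]
The cone is convex and weakly closed, so $L_{\mathrm{av}}\in\LLL_c$. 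Moreover $L_{\mathrm{av}}$ commutes with every $U_\lambda$, which means it reduces each $P_k$: $L_{\mathrm{av}}=\bigoplus_k L_k$ with $L_k=P_kLP_k$. Because $p$ and $1$ are homogeneous, $U_\lambda p=\lambda^np$ and $U_\lambda 1=1$, so the ratio is unchanged. The degree-$k$ compression of the condition $M_{z_i}^*L_{\mathrm{av}}M_{z_i}\le L_{\mathrm{av}}$ reads $M_{z_i}^*L_kM_{z_i}\le L_{k-1}$ on $P_{k-1}$, exactly as recorded before the lemma.

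\emph{Truncation of the block-diagonal operator.} Set $\tilde L=\sum_{k=0}^nL_k$. The blockwise conditions for $k=1,\dots,n$ are inherited from $L_{\mathrm{av}}$, so $\tilde L\in\LLL_c^{(n)}$. The ratio is again unchanged, since $p\in P_n$ and $1\in P_0$. Finally $\tilde L\neq 0$, because $\la\tilde L1,1\ra=\la L1,1\ra>0$; this uses the first observation in the proof of Lemma \ref{lem: SA norm with Lc}.

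The only step needing care is the averaging. One must check that the weak integral is bounded and positive and that $\LLL_c$ is closed under it, both of which follow by pairing with vectors $h$. One must also confirm that commuting with all $U_\lambda$ forces block diagonality, which follows from Fourier coefficients in $\lambda$. Once the operator is block diagonal, the truncation is immediate.
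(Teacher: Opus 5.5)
Your proof is correct, and it arrives at exactly the operator the paper uses, $\tilde L=\sum_{k=0}^n P_kLP_k$. The only difference is how you certify that $\tilde L$ lies in $\LLL_c$. The paper does this in one line: from $P_kM_{z_i}=M_{z_i}P_{k-1}$ for $1\le k\le n$ and $P_0M_{z_i}=0$ one gets
\[
M_{z_i}^*\tilde LM_{z_i}=\sum_{k=1}^nP_{k-1}M_{z_i}^*LM_{z_i}P_{k-1}\le\sum_{k=1}^nP_{k-1}LP_{k-1}\le\tilde L .
\]
So pinching and truncation are handled at once, and the cross terms that defeated your first attempt with $QLQ$ never appear. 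You instead realize the full pinching $\sum_k P_kLP_k$ as the average of the conjugates $U_\lambda^*LU_\lambda$. You then use that $\LLL_c$ is convex, weakly closed and invariant under these conjugations, and only afterwards truncate the block-diagonal operator. This costs some extra machinery (the weak integral and the Fourier-coefficient argument for block diagonality). In return it explains conceptually why pinching preserves $\LLL_c$, and it would adapt to other symmetry groups compatible with $M_z$. One harmless slip: since $U_\lambda M_{z_i}=\lambda M_{z_i}U_\lambda$, one has $U_\lambda^*M_{z_i}U_\lambda=\bar\lambda M_{z_i}$ rather than $\lambda M_{z_i}$; the unimodular factor cancels either way.
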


\begin{proof}
We abuse notation and write $P_k$ for the space of homogeneous polynomials
of degree $k$ and for the orthogonal projection onto it.
Let $L \in \LLL_c$.
In light of Lemma \ref{lem: SA norm with Lc}, it suffices to show
that there exists $L' \in \LLL_c^{(n)}$ with 
$\la L p, p \ra_2 = \la L' p, p \ra_2$ and $\la L 1,1 \ra_2 = \la L' 1,1 \ra_2$.
Let $L' = \sum_{k=0}^n P_k L P_k$. To show that $L' \in \LLL_c$, we have
to check that $M_{z_i}^* L' M_{z_i} \le L'$ for each $i$.

Since $P_k M_{z_i} = M_{z_i} P_{k-1}$ for $k=1,\ldots,n$ and $P_0 M_{z_i} = 0$, we have
\begin{align*}
    M_{z_i}^* L' M_{z_i} = \sum_{k=0}^n M_{z_i}^* P_k L P_k M_{z_i}
    = \sum_{k=1}^n P_{k-1} M_{z_i}^* L M_{z_i} P_{k-1}
    \le \sum_{k=1}^n P_{k-1} L P_{k-1} \le L'
\end{align*}
as desired.
\end{proof}

For comparison, we establish a related expression for the supremum norm. Let
\begin{equation*}
  \mathcal{L}_t = \{ L \in B(H^2(\mathbb{D}^d)): L \ge 0, M_{z_i}^* L M_{z_i} = L, i=1,\cdots,d \}.
\end{equation*}
Clearly, $\mathcal{L}_t \subset \mathcal{L}_c$. By the analogue of the Brown--Halmos criterion on the polydisc \cite[Theorem 3.1]{MSS18},
$\mathcal{L}_t$ precisely consists of all positive Toeplitz operators on $H^2(\mathbb{D}^d)$.

\begin{lem}
  \label{lem:sup_norm_Lt}
  For any $f \in H^2(\mathbb{D}^d)$,
  \[    
  \|f\|_{\infty}^2=\sup\left\{\frac{\la L f, f\ra_2}{\la L1,1\ra_2}~:~L\in\LLL_t, L \neq 0  \right\},
  \]
  where both sides of the equality are allowed to be infinite.
\end{lem}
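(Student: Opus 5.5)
We prove the two inequalities separately, using that $\LLL_t$ is exactly the cone of positive Toeplitz operators on $H^2(\DD^d)$, by the Brown--Halmos criterion \cite[Theorem 3.1]{MSS18} quoted above.

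For ``$\le$'', we test with normalized reproducing kernels. Fix $w\in\DD^d$ and let $\mu_w$ be the Poisson measure $P_w\,\intd\sigma$ on $\TT^d$, so that $\mu_w$ has positive density $|k_w|^2$. Let $L_w=T_{|k_w|^2}$ be the Toeplitz operator with this symbol. Then $L_w\ge 0$, and since $M_{z_i}$ is an isometry commuting with multiplication by a unimodular function on $L^2(\TT^d)$, we get $M_{z_i}^*L_wM_{z_i}=L_w$, so $L_w\in\LLL_t$. For polynomials $p$,
\[
\la L_w p,p\ra_2=\int_{\TT^d}|p|^2|k_w|^2\,\intd\sigma=\|pk_w\|_2^2.
\]
This identity extends to $f\in H^2(\DD^d)$, because $k_w$ is bounded on $\TT^d$ for fixed $w$. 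In particular $\la L_w1,1\ra_2=\|k_w\|_2^2=1$. Since $\la f k_w, k_w\ra_2 = f(w)\,k_w(w)/\|K_w\| = f(w)$, Cauchy--Schwarz gives
\[
\la L_wf,f\ra_2=\|fk_w\|_2^2\ge |\la fk_w,k_w\ra_2|^2=|f(w)|^2 .
\]
Taking the supremum over $w$ yields $\|f\|_\infty^2\le \sup_{L\in\LLL_t}\la Lf,f\ra_2/\la L1,1\ra_2$. If $f\notin H^\infty$, the same computation shows that the supremum is infinite.

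For ``$\ge$'', we may assume $\|f\|_\infty<\infty$. Take $L\in\LLL_t$ with $L\neq0$, and write $L=T_\mu$ for a positive measure $\mu$ on $\TT^d$: the Toeplitz condition gives a positive definite function on $\ZZ^d$ via $\alpha-\beta\mapsto\la Lz^\alpha,z^\beta\ra_2$, and Herglotz--Bochner produces $\mu$. For a polynomial $p$ we then have $\la Lp,p\ra_2=\int|p|^2\,\intd\mu$ and $\la L1,1\ra_2=\mu(\TT^d)$.

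Using the dilation $D_r$ as in Lemma \ref{lem: SA norm with Lc}, we get
\[
\la Lf_r,f_r\ra_2=\int|f_r|^2\,\intd\mu\le \|f\|_\infty^2\,\mu(\TT^d),
\]
since $f_r$ extends continuously to $\overline{\DD}^d$ and is the uniform limit there of its Taylor polynomials. Because $L$ is bounded and $f_r\to f$ in $H^2(\DD^d)$ as $r\to1^-$, we conclude $\la Lf,f\ra_2\le\|f\|_\infty^2\la L1,1\ra_2$.

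The main obstacle is justifying the passage from Toeplitz operators to measures, since $L$ is a priori only an operator satisfying the Toeplitz relations. One could instead argue purely operator-theoretically: $\la L f_r,f_r\ra_2=\la f_r(\Tbf)\xi,f_r(\Tbf)\xi\ra$ with $\Tbf$ the tuple associated with $L$, and $\Tbf$ consists of \emph{isometries}, because $M_{z_i}^*LM_{z_i}=L$. Commuting isometries extend to commuting unitaries, and for commuting unitaries the spectral theorem gives $\|f_r(\Tbf)\|\le\|f_r\|_\infty\le\|f\|_\infty$. I would use this second route, since it avoids the measure representation entirely.
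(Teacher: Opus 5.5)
Your proof is correct, but it takes a different route from the paper's.

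\textbf{The paper's route.} The paper invokes the polydisc Brown--Halmos criterion to write every $L\in\mathcal{L}_t$ as $T_w$ with $0\le w\in L^\infty(\TT^d)$. Then $\langle Lf,f\rangle_2/\langle L1,1\rangle_2=\int|f|^2w\,d\sigma\big/\int w\,d\sigma$, and both inequalities follow at once: the supremum of such weighted averages is $\operatorname{ess\,sup}_{\TT^d}|f|^2=\|f\|_\infty^2$. This is very short. However, it leans on the boundary-value theory of $H^2(\DD^d)$, including the fact that $f\notin H^\infty$ forces unbounded boundary values.

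\textbf{Your route.} You work entirely inside the polydisc.
\begin{itemize}
\item For one inequality, the Poisson-kernel symbols $|k_w|^2$ turn point evaluations into test operators in $\mathcal{L}_t$.
\item For the other, the dilations $f_r$ avoid boundary values.
\end{itemize}
With your preferred isometric route, you never actually use the Brown--Halmos criterion you cite at the start. You only need the easy fact that $T_{|k_w|^2}\in\mathcal{L}_t$, which you verify directly. The price is It\^o's theorem on unitary extensions of commuting isometries, together with the routine norm-convergent extension of $\langle Lp,p\rangle_2=\|p(\Tbf)\xi\|^2$ from polynomials to $f_r$.

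\textbf{What your route buys.} It exhibits the lemma as von Neumann's inequality for commuting isometries. This makes explicit the parallel with Lemma~\ref{lem: SA norm with Lc}: $\mathcal{L}_t$ corresponds to cyclic isometric tuples just as $\mathcal{L}_c$ corresponds to contractive ones. That is in the spirit of the dilation remark following the lemma.

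\textbf{Your Bochner--Herglotz route.} This is also complete as written. Shifting indices into $\mathbb{N}_0^d$ gives positive definiteness on $\ZZ^d$, so the ``main obstacle'' you flag is already resolved. That route is essentially a measure-valued version of the Brown--Halmos representation, which suffices here because you never need $\mu$ to have a bounded density.
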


\begin{proof}
  As mentioned above, $\mathcal{L}_t$ precisely consists of all positive Toeplitz operators on $H^2(\mathbb{D}^d)$. Moreover,
  if $T_w \in \mathcal{L}_t$, then
  \begin{equation*}
    \langle T_w f,f \rangle_2 = \int_{\mathbb{T}^d} |f|^2 w \, d \sigma.
  \end{equation*}
  Thus,
  \begin{equation*}
  \sup\left\{\frac{\la L f, f\ra_2}{\la L1,1\ra_2}~:~L\in\LLL_t, L \neq 0  \right\}
  = \sup \left\{ \frac{\int_{\mathbb{T}^d} |f|^2 w \, d \sigma}{\int_{\mathbb{T}^d} w  \, d \sigma}: w \in L^\infty(\mathbb{T}^d), w \ge 0, w \neq 0 \right\}.
  \end{equation*}
  The supremum on the right is easily seen to equal the essential supremum of $|f|^2$ on $\mathbb{T}^d$, which equals $\|f\|_\infty^2$.
\end{proof}

\begin{rem}
  If $d \le 2$, then the supremum norm and the Schur--Agler norm  agree, hence so do the expressions
  in Lemma \ref{lem: SA norm with Lc} and Lemma \ref{lem:sup_norm_Lt}.
  We now explain how the dilation theorem of Sz.-Nagy is reflected in our picture, thus relating
  $\mathcal{L}_t$ and $\mathcal{L}_c$ more directly in case $d=1$ without using the statements of the two lemmas.

  Since $\mathcal{L}_t \subset \mathcal{L}_c$, it is clear that for every $p \in \mathbb C[z]$,
  \begin{equation}
    \label{eqn:dilation_0}
  \sup\left\{\frac{\la L p, p\ra_2}{\la L1,1\ra_2}~:~L\in\LLL_t, L \neq 0  \right\}
  \le
  \sup\left\{\frac{\la L p, p\ra_2}{\la L1,1\ra_2}~:~L\in\LLL_c, L \neq 0  \right\}.
  \end{equation}

  Conversely, let $L \in \mathcal{L}_c$ and assume that the associated cyclic operator $(T,h)$ is a strict contraction.
  (It suffices to take the supremum over such $L \in \mathcal{L}_c$.)
  Let $U$ be the minimal unitary dilation of $T$ and let $\mu$ be spectral measure of $U$ associated with the vector $h$, i.e.\
  \begin{equation*}
    \langle q(U)^* p(U) h,h \rangle = \int_{\mathbb{T}} \overline{q} p \, d \mu \quad \text{ for all } p,q \in \mathbb{C}[z].
  \end{equation*}
  Since $T$ is a strict contraction, $\mu$ is absolutely continuous (see \cite[Theorem II.1.2]{SFB+10}), say $d \mu = w d \sigma$,
  where $w \in L^1(\mathbb{T})$ is non-negative. Assume, for the moment, that $w \in L^\infty(\mathbb{T})$
  and let $\widehat{L} = T_w$ be the Toeplitz operator with symbol $w$. Then $\widehat{L} \in \mathcal{L}_t$,
  \begin{equation}
    \label{eqn:dilation}
    \begin{split}
      \langle \widehat{L} 1,1 \rangle_2 &= \int_{\mathbb{T}} w \, d \sigma = \int_{\mathbb{T}} 1 \, d \mu = \|h\|^2 = \langle L1,1 \rangle_2 \quad \text{ and } \\
    \langle \widehat{L} p, p \rangle_2 &= \int_{\mathbb{T}} |p|^2 w \, d \sigma = \|p(U) h \|^2 \ge \|p(T) h \|^2
    = \langle L p,p \rangle_2
    \end{split}
  \end{equation}
  for all $p \in \mathbb{C}[z]$, and so $L \le \widehat{L}$.
  Thus, we obtain the reverse inequality of \eqref{eqn:dilation_0} whenever $w \in L^\infty(\mathbb{T})$. If $w$ merely belongs to $L^1(\mathbb{T})$,
  we let $\widehat{L}_n$ be the Toeplitz operator with symbol $\min(w,n)$ for $n \in \mathbb{N}$.
  Then \eqref{eqn:dilation} is true in the limit $n \to \infty$, which establishes the reverse inequality in general.
\end{rem}

A similar argument as in the proof of Lemma \ref{lem: SA by graded} shows that
when computing the supremum norm of a homogeneous polynomial with the help of Lemma \ref{lem:sup_norm_Lt}, it suffices
to consider graded operators $L = \sum_{k=0}^\infty L_k \in \mathcal{L}_t$,
where each $L_k \in \mathcal{B}(P_k)$.
However, we can no longer consider finite sums $L = \sum_{k=0}^n L_k$,
where each $L_k \in \mathcal{B}(P_k)$ is positive, satisfying
\begin{equation}
  \label{eqn:TTO}
  M_{z_i}^* L_k M_{z_i} = L_{k-1} \quad \text{ for all } k=1,\ldots,n, \quad i=1,\ldots,d.
\end{equation}
The issue is that such operators $L$ may fail to extend to positive Toeplitz operators.
The following is a concrete example where the analogue of Lemma \ref{lem: SA by graded} for the supremum norm fails.
The example was found through numerical experiments using semi-definite programming (see also Section \ref{sec: SDP}), but it can be verified symbolically,
and in principle even by hand.

\begin{exam}
  \label{exa:TTO}
  Let
  \begin{equation*}
    p(z_1,z_2,z_3) = z_1^2 + z_2^2 + z_3^2 - 2 (z_1 z_2 + z_1 z_3 + z_2 z_3)
  \end{equation*}
  be the Kaijser--Varopoulos polynomial. It is known that $\|p\|_\infty = 5$; see for instance \cite[Proposition 2]{Hol01}
  for a proof.

  We equip $P_0$ with the basis $\{1\}$ and $P_1$ with the basis $\{z_1,z_2,z_3\}$ and define operators
  \begin{equation*}
    \renewcommand{\arraystretch}{1.3}
    L_0  = 1 \quad \text{ and } \quad
    L_1  =
    \begin{bmatrix}
      1 & -\frac{1}{5} & -\frac{1}{5} \\
      -\frac{1}{5} & 1 & -\frac{1}{5} \\
      -\frac{1}{5} & -\frac{1}{5} & 1
    \end{bmatrix}.
  \end{equation*}
  Moreover, equip $P_2$ with the basis $\{z_1^2,z_2^2,z_3^2,z_1 z_2,z_1 z_3,z_2 z_3 \}$
  and define
  \begin{equation*}
    \renewcommand{\arraystretch}{1.3}
    L_2 = \begin{bmatrix}
1 & \frac{7}{10} & \frac{7}{10} & -\frac{1}{5} & -\frac{1}{5} & -\frac{4}{5} 
\\
 \frac{7}{10} & 1 & \frac{7}{10} & -\frac{1}{5} & -\frac{4}{5} & -\frac{1}{5} 
\\
 \frac{7}{10} & \frac{7}{10} & 1 & -\frac{4}{5} & -\frac{1}{5} & -\frac{1}{5} 
\\
 -\frac{1}{5} & -\frac{1}{5} & -\frac{4}{5} & 1 & -\frac{1}{5} & -\frac{1}{5} 
\\
 -\frac{1}{5} & -\frac{4}{5} & -\frac{1}{5} & -\frac{1}{5} & 1 & -\frac{1}{5} 
\\
 -\frac{4}{5} & -\frac{1}{5} & -\frac{1}{5} & -\frac{1}{5} & -\frac{1}{5} & 1 
    \end{bmatrix}.
  \end{equation*}
  One checks that these operators satisfy \eqref{eqn:TTO} (e.g.\ $M_{z_1}^* L_2 M_{z_1}$ is the submatrix of $L_2$ corresponding
  to rows and columns with indices $1,4,5$, which equals $L_1$).
  Moreover, $L_0,L_1,L_2$ are positive ($L_2$ has eigenvalues $0,0,0,\frac{3}{2},\frac{3}{2},3$).
  We have $\langle L1,1 \rangle_2  =1$. But $\langle L p, p \rangle_2 = \frac{144}{5} = 28.8 > 25 = \|p\|^2_\infty$.
\end{exam}

\begin{rem}\label{rem: two pairings}
The approach of the Schur--Agler norm using convex arguments is not new. In \cite{Agler82} (also see \cite[Section 2.8]{AgMcYo20book}), Agler defined the space $\mathrm{Her}(\DD^d)$ of hereditary functions and realized a contractive cyclic commuting tuple $(\Tbf,\xi)$ as a bounded linear functional $\Lambda_{\Tbf,\xi}\in\left(\mathrm{Her}(\DD^d)\right)^*$. The linear functional $\Lambda_{\Tbf,\xi}$ is determined by
\[
\Lambda_{\Tbf,\xi}(z^\alpha\bw^\beta)=\la\Tbf^\alpha\xi, \Tbf^\beta\xi\ra,\quad\forall\alpha, \beta\in\NN_0^d.
\]
By linear extension, this determines the value of $\Lambda_{\Tbf,\xi}$ on $\CC[\zbf,\overline{\mathbf{w}}]$, which is dense in $\mathrm{Her}(\DD^d)$. Essentially, the cone of the linear functionals $\{\Lambda_{\Tbf,\xi}~:~(\Tbf,\xi)\text{ is a contractive commuting cyclic tuple}\}$ plays a similar role as $\LLL_c$ in our case. In Subsection \ref{subsec: Agler decomp}, we give a proof of the existence of Agler decomposition using the cone $\LLL_c$.
\end{rem}

\section{First Applications: New Proofs}\label{sec: new proofs}
In this section, we give two examples of how the extra structure coming from the $L$ operators prove convenient when studying contractive commuting operator tuples. We will give two new proofs of existing results. 
\subsection{Testing the Schur--Agler Norm on Diagonal or Nilpotent Tuples}
The goal of this subsection is to give a new proof of Proposition \ref{prop:nilpotent-diagonal-intro}.
Let $\Tbf$ be a tuple of operators similar to $M_z$ on $H^2(\mathbb{D}^d)$, i.e.\ there exists
an invertible operator $S$
such that $T_j = S^{-1} M_{z_j} S$ for $j=1,\ldots,d$. If $f \in H^\infty(\mathbb{D}^d)$, then we define
$f(T) = S^{-1} f(M_z) S = S^{-1} M_f S$.
\begin{prop}
  \label{prop:similar_to_M_z}
  For $f \in H^\infty(\mathbb{D}^d)$,
  \begin{equation*}
    \|f\|_{\SArm} = \sup \{ \|f(\Tbf)\|: \Tbf \text{ is a tuple of contractions similar to $M_z$ on $H^2(\mathbb{D}^d)$}\}.
  \end{equation*}
\end{prop}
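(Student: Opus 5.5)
The plan is to prove the two inequalities separately. The inequality ``$\ge$'' needs an approximation argument. The inequality ``$\le$'' will come from Lemma \ref{lem: SA norm with Lc}, by perturbing $L$ to $L+\epsilon I$, whose associated tuple is similar to $M_z$.

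\textbf{The inequality ``$\ge$''.} Let $\Tbf=S^{-1}M_zS$ be a tuple of contractions with $f(\Tbf)=S^{-1}M_fS$. For $0<r<1$ the tuple $r\Tbf$ consists of commuting strict contractions, so $\|f(r\Tbf)\|\le\|f\|_{\SArm}$. Moreover, $f(r\Tbf)=S^{-1}f(rM_z)S=S^{-1}M_{f_r}S$ with $f_r(z)=f(rz)$. As $r\to1^-$, $M_{f_r}\to M_f$ in the strong operator topology: the norms are bounded by $\|f\|_\infty$, and $f_rg\to fg$ in $H^2$ for polynomials $g$. Hence $S^{-1}M_{f_r}S\to S^{-1}M_fS$ strongly, and lower semicontinuity of the norm under strong convergence gives $\|f(\Tbf)\|\le\liminf_r\|f(r\Tbf)\|\le\|f\|_{\SArm}$.

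\textbf{The inequality ``$\le$''.} We may assume $f\in H^\infty\subset H^2$. By Lemma \ref{lem: SA norm with Lc} it suffices to show that for each nonzero $L\in\LLL_c$ we have
\[
\frac{\la Lf,f\ra_2}{\la L1,1\ra_2}\le \sup_{\Tbf}\|f(\Tbf)\|^2,
\]
where the supremum runs over tuples of contractions similar to $M_z$. Fix $\epsilon>0$ and set $L_\epsilon=L+\epsilon I$. Since $M_{z_i}^\ast M_{z_i}=I$, we get $M_{z_i}^\ast L_\epsilon M_{z_i}\le L+\epsilon I=L_\epsilon$, so $L_\epsilon\in\LLL_c$. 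Moreover $L_\epsilon\ge\epsilon I$ is invertible. Write $L_\epsilon=A^\ast A$ with $A=L_\epsilon^{1/2}$, which is invertible. By Construction \ref{defn: L operator}, the associated tuple acts on $\overline{\mathrm{Range}A}=H^2(\DD^d)$ by $T_iAg=Az_ig$, that is, $T_i=AM_{z_i}A^{-1}$. This is a tuple of contractions similar to $M_z$, with $\xi=A1$. Then
\[
f(\Tbf)\xi=AM_fA^{-1}A1=Af,\qquad \|f(\Tbf)\|^2\ge\frac{\|Af\|^2}{\|A1\|^2}=\frac{\la L_\epsilon f,f\ra_2}{\la L_\epsilon1,1\ra_2}.
\]
Letting $\epsilon\to0$ gives the ratio for $L$, where $\la L1,1\ra_2>0$ because $L\neq0$.

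\textbf{Main obstacle.} The only delicate points are checking that the functional calculus $f(\Tbf)=S^{-1}M_fS$ agrees with the one implicit in the construction, and the limit in the ``$\ge$'' direction. Both are handled by the strong-convergence argument above. The rest is routine.
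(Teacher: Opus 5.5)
Your proof is correct and follows the paper's argument. The ``$\le$'' direction uses Lemma \ref{lem: SA norm with Lc} and the perturbation $L_\varepsilon=L+\varepsilon I$, whose associated tuple $T_i=L_\varepsilon^{1/2}M_{z_i}L_\varepsilon^{-1/2}$ is similar to $M_z$; the ``$\ge$'' direction uses an approximation argument. The differences are cosmetic: you approximate with radial dilations $r\Tbf$ instead of the Fej\'er means of Lemma \ref{lem:SA_approx}, and you compute $f(\Tbf)\xi=AM_fA^{-1}A1=Af$ directly from the definition $f(\Tbf)=S^{-1}M_fS$, which lets you skip the paper's extension from polynomials to $H^\infty(\mathbb{D}^d)$.
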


\begin{proof}
  The inequality ``$\ge$'' is obvious when $f \in \polyn$,
  and an approximation argument (for instance using Lemma \ref{lem:SA_approx}) shows that it holds for all $f \in H^\infty(\mathbb D^d)$.
  
  Conversely,
  by Lemma \ref{lem: SA norm with Lc}, we have
  \begin{equation*}
    \|f\|_{\SArm}^2 = \sup \Big\{ \frac{\langle Lf,f \rangle_2}{\langle L1,1 \rangle_2}: L \in \mathcal{L}_c \Big\}.
  \end{equation*}
  Let $L \in \mathcal{L}_c$, $\varepsilon > 0$ and set $L_\varepsilon = L + \varepsilon I$.
  Then $L_\varepsilon \in \mathcal{L}_c$,
  $L_\varepsilon$ is invertible,
  and clearly $L_\varepsilon \to L$ in norm as $\varepsilon \to 0$.
  Thus,
  \begin{equation*}
    \|f\|_{\SArm}^2 \le \sup \Big\{ \frac{\langle Lf,f \rangle_2}{\langle L 1,1 \rangle_2} : L \in \mathcal{L}_c, L \text{ invertible} \Big\}.
  \end{equation*}
  Now, let $L \in \mathcal{L}_c$ be invertible
  and let $(\Tbf,\xi)$ be the cyclic commuting tuple
  of contractions induced by $L$. By Construction \ref{defn: L operator}, $T_i=L^{1/2}M_{z_i}L^{-1/2}, i=1, \cdots, d$;
  thus $\Tbf$ is similar to $M_z$.
  Moreover, for all $f \in \polyn$,
  \begin{equation*}
    \frac{\langle L f,f \rangle_2}{\langle L1,1 \rangle_2} = \frac{\|f(\Tbf) \xi\|^2}{\|\xi\|^2} \le \|f(\Tbf)\|^2,
  \end{equation*}
  and again an approximation argument
  shows that the equality on the left, and hence the inequality, continues to hold for all $f \in H^\infty(\mathbb{D}^d)$.
  This proves the remaining inequality.
\end{proof}

Recall that we write $\mathcal{C}_d$ for the set of cyclic commuting $d$-tuples of strictly contractive matrices (of arbitrary size).

\begin{prop}\label{prop: nilpotent and diagonal}
For $f\in H^\infty(\mathbb{D}^d)$,
\begin{flalign*}
    \|f\|_{\SArm}=&\sup\left\{\|f(\Tbf)\|~:~\Tbf \in \mathcal{C}_d \text{ is jointly nilpotent}\right\}\\
    =&\sup\left\{\|f(\Tbf)\|~:~\Tbf \in \mathcal{C}_d \text{ is jointly diagonalizable}\right\}.
\end{flalign*}
\end{prop}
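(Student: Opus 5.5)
The inequalities ``$\ge$'' are immediate, since jointly nilpotent or jointly diagonalizable tuples in $\mathcal{C}_d$ are commuting tuples of strict contractions. So the whole task is to show that $\|f\|_{\SArm}$ is bounded above by each of the two suprema. My plan is to start from Proposition \ref{prop:similar_to_M_z} and compress a tuple similar to $M_z$ onto suitable finite-dimensional co-invariant subspaces of $H^2(\mathbb{D}^d)$. More precisely, take $L \in \mathcal{L}_c$ invertible, so that $T_i = L^{1/2}M_{z_i}L^{-1/2}$. It then suffices to approximate $\langle Lf,f\rangle_2/\langle L1,1\rangle_2$ by the analogous quantity for cyclic matrix tuples of the required type.

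For the \textbf{nilpotent} case I use the projections $Q_n = P_{\le n}$ onto polynomials of degree at most $n$, and set $L^{(n)} = Q_n L Q_n$. I have to check that $L^{(n)} \in \mathcal{L}_c$. Since $Q_n M_{z_i} = Q_n M_{z_i} Q_{n-1}$ and $Q_{n-1}\le Q_n$, we get
\[
M_{z_i}^* Q_n L Q_n M_{z_i} = Q_{n-1}M_{z_i}^* Q_n L Q_n M_{z_i} Q_{n-1}.
\]
This is not obviously $\le Q_{n-1} M_{z_i}^* L M_{z_i} Q_{n-1}$, so instead I will argue at the operator level. Consider $\mathcal{K}_n = \overline{L^{1/2}}$-image structure. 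Equivalently, the tuple on the quotient $\mathbb{C}[\mathbf{z}]/\langle \text{monomials of degree } n+1\rangle$ with inner product $\langle Lp,q\rangle$ is the compression of $\Tbf$ to the orthogonal complement of the invariant subspace $\overline{\operatorname{span}}\{\Tbf^\alpha\xi : |\alpha|>n\}$. Compressions to co-invariant subspaces of commuting contractions are commuting contractions. This compression is jointly nilpotent, cyclic with cyclic vector the projection of $\xi$, and acts on a finite-dimensional space. Moreover, $\|f(\Tbf_n)\xi_n\|$ equals the norm of the projection of $f(\Tbf)\xi$, while $\|\xi_n\|\le\|\xi\|$. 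I expect that this projection tends to $f(\Tbf)\xi$ as $n\to\infty$, because the invariant subspaces decrease to $\{0\}$. For that I use that $\Tbf$ is similar to $M_z$, where $\bigcap_n z^{\ge n}H^2 = 0$; alternatively I can use the strict-contraction reduction with $L_r$ from Lemma \ref{lem: SA norm with Lc}, which gives $\|\Tbf^\alpha\|\le r^{|\alpha|}$. Strictness of the contractions is obtained by scaling with $r<1$.

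For the \textbf{diagonalizable} case I instead compress to co-invariant subspaces spanned by kernels. Choose finite sets $F\subset\mathbb{D}^d$ of distinct points, and let $\mathcal{K}_F = S^{*}\operatorname{span}\{K_w : w\in F\}$, where $T_i = S^{-1}M_{z_i}S$ with $S = L^{1/2}$. Since $M_{z_i}^*K_w = \bar w_i K_w$, the space $\mathcal{K}_F$ is invariant under $T_i^*$, and $T_i^*$ is diagonalizable there with eigenvalues $\bar w_i$. Hence the compression $P_F \Tbf|_{\mathcal{K}_F}$ is a commuting tuple of contractions that is jointly diagonalizable, with $f$ of it equal to the compression of $f(\Tbf)$. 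Cyclicity holds because a nonzero vector with nonzero components in every eigenvector direction is cyclic when the joint eigenvalues are distinct. Taking an increasing sequence $F_m$ whose union is a uniqueness set, for instance a dense subset, the spaces $\mathcal{K}_{F_m}$ increase to all of $H^2$. Therefore $\|P_{F_m} f(\Tbf)P_{F_m}\|\to\|f(\Tbf)\|$ by strong convergence. Finally I rescale by $r<1$ to make the contractions strict, using that $\|f(r\Tbf_F)\|\to\|f(\Tbf_F)\|$ in finite dimensions.

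The main obstacle is the limiting argument for nilpotent compressions. I need the co-invariant finite-dimensional spaces to exhaust $\mathcal{H}$, and I need $f(\Tbf)$ for $f\in H^\infty$, defined via the similarity, to be compatible with compression. I will handle the first point by first reducing to $L_r$ with $r<1$, so that $\sum_\alpha\|\Tbf^\alpha\|$ converges and the tail subspaces shrink to zero. I will handle the second point via Fejér means, using Lemma \ref{lem:SA_approx}.
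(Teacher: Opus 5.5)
Your overall route is the paper's: reduce via Proposition \ref{prop:similar_to_M_z} to tuples $\Tbf=S^{-1}M_zS$, then compress to finite-dimensional $\Tbf^*$-invariant subspaces.

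In the diagonalizable case your argument is the paper's, with two small corrections. First, $T_i=L^{1/2}M_{z_i}L^{-1/2}$ forces $S=L^{-1/2}$, not $S=L^{1/2}$. Second, cyclicity is easiest to see as in the paper: compressing a cyclic tuple to a co-invariant subspace keeps it cyclic, with cyclic vector $P_F\xi$.

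In the nilpotent case your subspace is also the paper's. Since $\xi=L^{1/2}1=S^{-1}1$, you have $\Tbf^\alpha\xi=S^{-1}z^\alpha$. Hence $\mathcal{N}_n=S^{-1}(P_{\le n}^\perp)$ and $\mathcal{N}_n^\perp=S^*P_{\le n}$, which is exactly the paper's $S^*Q_n$. This description also makes $\mathcal{N}_n$ visibly invariant under $f(\Tbf)=S^{-1}M_fS$. So compatibility of $f(\Tbf)$ with compression is immediate, and you do not need Fej\'er means for it.

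The flaw is the justification you finally commit to for the exhaustion step. Geometric decay $\|\Tbf^\alpha\|\le r^{|\alpha|}$, or summability of $\sum_\alpha\|\Tbf^\alpha\|$, does not make the closed subspaces $\mathcal{N}_n=\overline{\operatorname{span}}\{\Tbf^\alpha\xi:|\alpha|>n\}$ decrease to $\{0\}$. Spanning vectors of small norm can still span a fixed subspace.

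Here is a counterexample. Take the scalar tuple $\Tbf=\lambda\in\DD^d\setminus\{0\}$ on $\CC$ with $\xi=1$. Its $L$-operator $K_\lambda\otimes K_\lambda$ lies in $\LLL_c$, and its powers decay geometrically. Yet $\mathcal{N}_n=\CC$ for every $n$, so all your compression spaces $\mathcal{N}_n^\perp$ are $\{0\}$.

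Moreover, passing from $L$ to $L_r=D_rLD_r$ just replaces $(\Tbf,\xi)$ by $(r\Tbf,\xi)$. This leaves every $\mathcal{N}_n$ unchanged, so that reduction contributes nothing to the exhaustion.

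What actually gives $\bigcap_n\mathcal{N}_n=\{0\}$ is the similarity: $\mathcal{N}_n=S^{-1}(P_{\le n}^\perp)$ together with $\bigcap_n P_{\le n}^\perp=\{0\}$. This is the first justification you mention. Drop the $L_r$ argument and use this one; with it the proof goes through and coincides with the paper's.
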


\begin{proof}
  Let $\Tbf$ be a tuple of contractions similar to $M_z$ on $H^2(\mathbb{D}^d)$, i.e.\ $\Tbf = S^{-1} M_z S$.
  For each finite set $F \subset \mathbb{D}^d$, let $Q_F = \mathrm{span} \{K_{z}: z \in F \}$.
  Then $Q_F$ is invariant under the tuple $M_z^*$, hence $M_F := S^* Q_F$ is invariant under $\Tbf^*$.
  Moreover, $M_z^* \big|_{Q_F}$ is a diagonalizable tuple on a finite dimensional space, hence so is $\Tbf^* \big|_{M_F}$
  and thus also $P_{M_F} \Tbf \big|_{M_F}$. Note also that $P_{M_F} \Tbf \big|_{M_F}$ is cyclic as $\Tbf$ is cyclic
  and $M_F$ is co-invariant.
  The net $(P_{M_F})$ tends to the identity in SOT, so
  \begin{equation*}
    \|f(\Tbf)\| \le \sup_F \|P_{M_F} f(\Tbf) \big|_{M_F}\| = \sup_F \| f( P_{M_F} \Tbf \big|_{M_F})\|,
  \end{equation*}
  the last inequality by co-invariance of $M_F$.
  In light of Proposition \ref{prop:similar_to_M_z}, it follows that
  \begin{equation*}
    \|f\|_{\SArm}
    \le \sup\left\{\|f(\Tbf)\|~:~\Tbf\text{ is a commuting tuple of cyclic diagonalizable contractive matrices}\right\};
  \end{equation*}
  by approximating $\Tbf$ with $r \Tbf$ for $r < 1$, we see that
  \begin{equation*}
    \|f\|_{\SArm} \le \sup \{ \|f(\Tbf)\|: \Tbf \in \mathcal{C}_d \text{ is diagonalizable} \}.
  \end{equation*}
  The reverse inequality is trivial.

  The proof in the nilpotent case is similar; replace $Q_F$ with
  \begin{equation*}
    Q_n = \mathrm{span} \{ z^\alpha: |\alpha| \le n \}. \qedhere
  \end{equation*}
\end{proof}

\begin{rem}
  The $L$-operator framework and Lemma \ref{lem: SA norm with Lc} make it possible to give a uniform proof
  of both expressions in Proposition \ref{prop: nilpotent and diagonal}. Working with the operators directly,
  the reduction to nilpotent matrices is arguably easier than the reduction to diagonalizable matrices.
  For instance, one can tensor a commuting tuple $\mathbf T$ with the unilateral shift $M_z \in B( H^2(\mathbb{D}))$
  and then approximate $M_z$ by $P_{1,\le n} M_z \big|_{P_{1, \le n}}$,
  which produces nilpotent tuples. (This is the proof of the first author mentioned in \cite{knese25}.)
  But such a procedure will not produce diagonalizable tuples in general, for instance if $\mathbf{T}$ is nilpotent to begin with,
  since the tensor product remains nilpotent.
  In the $L$-operator framework, we have the very simple approximation of $L$ by $L + \varepsilon I$, which avoids this problem.
\end{rem}

\subsection{The Agler Decomposition}\label{subsec: Agler decomp}
In \cite{Agler82}, Agler proved the following result. The decomposition on the right hand side is now widely known as the Agler decomposition. 
\begin{thm}[Agler \cite{Agler82}]\label{thm: Agler decomp}
A bounded holomorphic function $f$ on $\DD^d$ satisfies $\|f\|_{\SArm}\leq1$ if and only if there exist positive definite kernels $k_1, \cdots, k_d$ such that
\[
1-f(z)\overline{f(w)}=\sum_{i=1}^d(1-z_i\bw_i)k_i(z,w),\quad\forall z, w\in\DD^d.
\]
\end{thm}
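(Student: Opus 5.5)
The plan is to recast both sides of Theorem \ref{thm: Agler decomp} in terms of trace class operators on $H^2(\DD^d)$, and to use the duality $\SSS^1(H^2(\DD^d))^*=\BBB(H^2(\DD^d))$ together with Lemma \ref{lem: SA norm with Lc}.

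\textbf{Dictionary between kernels and operators.} To an operator $X$ we attach the hereditary kernel $x(z,w)=\la XK_w,K_z\ra_2$. Under this correspondence:
\begin{itemize}
\item the rank one operator $g\otimes g$ has kernel $g(z)\overline{g(w)}$;
\item $1\otimes 1-f\otimes f$ has kernel $1-f(z)\overline{f(w)}$;
\item $X-M_{z_i}XM_{z_i}^*$ has kernel $(1-z_i\bw_i)x(z,w)$;
\item $X\ge0$ if and only if $x$ is a positive definite kernel.
\end{itemize}
For $L\in\BBB(H^2(\DD^d))$ we also have $\Tr(L(g\otimes g))=\la Lg,g\ra_2$ and $\Tr(L(X-M_{z_i}XM_{z_i}^*))=\Tr((L-M_{z_i}^*LM_{z_i})X)$.

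\textbf{Sufficiency.} Suppose the decomposition holds. First treat the case where each $k_i$ is the kernel of a positive trace class $X_i$, so that $1\otimes1-f\otimes f=\sum_i(X_i-M_{z_i}X_iM_{z_i}^*)$. Pairing with any $L\in\LLL_c$ gives
\[
\la L1,1\ra_2-\la Lf,f\ra_2=\sum_i\Tr\big((L-M_{z_i}^*LM_{z_i})X_i\big)\ge0 .
\]
Lemma \ref{lem: SA norm with Lc} then yields $\|f\|_{\SArm}\le1$. For general kernels $k_i$, first replace $f$ by $f_r(z)=f(rz)$ with $r<1$. The dilated kernels $k_i(rz,rw)$ correspond to operators of the form $D_rX_iD_r$, where $X_i$ may be unbounded, and these are trace class. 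Then let $r\to1$, using that $\|f_r\|_{\SArm}\to\|f\|_{\SArm}$.

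\textbf{Necessity.} Assume $\|f\|_{\SArm}\le1$; then $f\in H^\infty\subset H^2$. Consider the convex cone
\[
\CCC=\Big\{\sum_{i=1}^d(X_i-M_{z_i}X_iM_{z_i}^*)~:~X_i\in\SSS^1,\ X_i\ge0\Big\}
\]
in the real Banach space of self-adjoint trace class operators.

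The cone $\CCC$ contains all of $\SSS^1_+$ in its closure. Indeed, for $X\ge0$ in $\SSS^1$ the telescoping identity
\[
X=\sum_{n<N}M_{z_1}^n\big(Y-M_{z_1}YM_{z_1}^*\big)M_{z_1}^{*n}\ \text{(rearranged)}+M_{z_1}^NXM_{z_1}^{*N},
\]
more precisely $X-M_{z_1}^NXM_{z_1}^{*N}=Y_N-M_{z_1}Y_NM_{z_1}^*$ with $Y_N=\sum_{n<N}M_{z_1}^nXM_{z_1}^{*n}\ge0$. Since $M_{z_1}^{*N}\to0$ strongly, the error term $M_{z_1}^NXM_{z_1}^{*N}$ tends to $0$ in $\SSS^1$ for trace class $X$.

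Now suppose $1\otimes1-f\otimes f\notin\overline{\CCC}$ (closure in trace norm). The Hahn--Banach theorem provides a bounded self-adjoint $L$ with $\Tr(LY)\ge0$ for all $Y\in\CCC$ and $\la L1,1\ra_2-\la Lf,f\ra_2<0$. Nonnegativity on $\CCC$ means exactly $M_{z_i}^*LM_{z_i}\le L$. Nonnegativity on $\overline{\CCC}\supset\SSS^1_+$ gives $L\ge0$. Hence $L\in\LLL_c$, and $L\neq0$ because $\la Lf,f\ra_2>0$. This contradicts Lemma \ref{lem: SA norm with Lc}. Therefore there are $X_i^{(n)}\ge0$ with
\[
\sum_i\big(X_i^{(n)}-M_{z_i}X_i^{(n)}M_{z_i}^*\big)\to1\otimes1-f\otimes f \quad\text{in }\SSS^1,
\]
and hence the corresponding kernels converge pointwise on $\DD^d\times\DD^d$.

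\textbf{Main obstacle: passing from the closure to actual kernels.} Write $k_i^{(n)}$ for the kernel of $X_i^{(n)}$. Evaluating on the diagonal gives
\[
\sum_i(1-|z_i|^2)k_i^{(n)}(z,z)\longrightarrow 1-|f(z)|^2 .
\]
The convergence is locally uniform, since $|y(z,z)|\le\|Y\|_{\SSS^1}\|K_z\|^2$. Every summand is nonnegative, so the quantities $k_i^{(n)}(z,z)$ are locally uniformly bounded in $n$. By Cauchy--Schwarz for positive kernels, $|k_i^{(n)}(z,w)|^2\le k_i^{(n)}(z,z)k_i^{(n)}(w,w)$, so the $k_i^{(n)}$ are locally uniformly bounded on $\DD^d\times\DD^d$. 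Each $k_i^{(n)}$ is holomorphic in $z$ and antiholomorphic in $w$. Montel's theorem (applied to $(z,\bw)\mapsto k_i^{(n)}(z,w)$) gives a subsequence converging locally uniformly to hereditary kernels $k_i$. Positive definiteness is preserved under pointwise limits. Passing to the limit in the kernel identity yields
\[
1-f(z)\overline{f(w)}=\sum_i(1-z_i\bw_i)k_i(z,w).
\]
This compactness step, ensuring that no mass escapes, is where I expect most of the care to be needed. The diagonal bound above is what I would rely on to control it.
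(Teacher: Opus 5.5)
Your necessity argument breaks at the claim $\SSS^1_+\subset\overline{\CCC}$. The telescoping identity is right, but the remainder does not go to $0$ in trace norm. Since $M_{z_1}$ is an isometry, for $X\ge0$ you have $\|M_{z_1}^NXM_{z_1}^{*N}\|_{\SSS^1}=\Tr(XM_{z_1}^{*N}M_{z_1}^{N})=\Tr X$ for every $N$. Strong convergence $M_{z_1}^{*N}\to0$ only gives convergence in a weaker topology.

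In fact the claim is false. $\Tr(X_i-M_{z_i}X_iM_{z_i}^*)=\Tr X_i-\Tr(M_{z_i}^*M_{z_i}X_i)=0$, so $\CCC$ and its trace-norm closure lie in the hyperplane $\{\Tr=0\}$, which contains no nonzero positive operator. Dually, $L=-I$ satisfies $M_{z_i}^*LM_{z_i}\le L$. So the functional produced by Hahn--Banach need not be positive, and there is no contradiction with Lemma \ref{lem: SA norm with Lc}.

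The conclusion of that step is itself false: $\Tr(1\otimes1-f\otimes f)=1-\|f\|_2^2\neq0$ whenever $\|f\|_2<1$. For $f=0$, $-I$ is exactly a separating functional. The underlying reason is that Agler kernels are generally not kernels of trace class operators. For $d=1$, $f=0$, the only decomposition is $k=\frac{1}{1-z\bw}$, the kernel of $I$. So mass escapes already at the separation step, before your compactness argument begins.

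The remedy is the paper's: enlarge the cone to $\CCC=\{A+\sum_i(B_i-M_{z_i}B_iM_{z_i}^*):A,B_i\in\SSS^1,\ A,B_i\ge0\}$. The extra summand $A$ makes the dual cone exactly $\LLL_c$. Then Hahn--Banach plus Lemma \ref{lem: SA norm with Lc} gives $1\otimes1-f\otimes f\in\overline{\CCC}$; your direct use of the lemma is fine in place of the paper's weak-$*$ density argument.

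At the kernel level, $A$ is absorbed by writing $a(z,w)=\la AK_w,K_z\ra=(1-z_1\bw_1)\cdot\frac{a(z,w)}{1-z_1\bw_1}$. The second factor is positive by the Schur product theorem. So each element of $\CCC$ still yields an Agler decomposition, with $k_1=\frac{a}{1-z_1\bw_1}+b_1$ and $k_i=b_i$ for $i\ge2$.

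Your diagonal estimate then works unchanged. The quantity $\la C_nK_z,K_z\ra=a_n(z,z)+\sum_i(1-|z_i|^2)b_{n,i}(z,z)$ has nonnegative summands and converges locally uniformly. This gives $k_{n,i}(z,z)\le\la C_nK_z,K_z\ra/(1-|z_i|^2)$, and Cauchy--Schwarz plus Montel finish as you describe.

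Finally, your sufficiency sketch (which the paper does not give) tacitly assumes the $k_i$ are holomorphic in $z$ and antiholomorphic in $w$. Otherwise they are not kernels of operators on $H^2(\DD^d)$, and for general positive kernels you need an extra step such as a lurking isometry argument. The factor $1-r^2z_i\bw_i$ in the dilated identity is harmless, since $L-r^2M_{z_i}^*LM_{z_i}\ge0$ for $L\in\LLL_c$.
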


For a hereditary function $h(z,w)=\sum_{j=1}^mf_j(z)\overline{g_j(w)}$, where $f_j, g_j\in H^2(\DD^d)$, define the operator
\[
R_h=\sum_jf_j\otimes g_j\quad\text{on}\quad H^2(\DD^d).
\]
Note that $R_h$ does not depend on the decomposition of $h$, since
\begin{equation}\label{eqn: Rh and h via kernels}
\la R_h K_w, K_z\ra=h(z,w),\quad\forall z, w\in\DD^d.
\end{equation}
We give a proof of the ``only if'' part of Theorem \ref{thm: Agler decomp}. (The ``only if'' direction is generally considered the more difficult direction.)

\begin{proof}[Proof of Theorem \ref{thm: Agler decomp}, ``$\Rightarrow$'']
Suppose $f\in H^\infty(\DD^d)$ and $\|f\|_{\SArm}\leq1$. Let $h(z,w)=1-f(z)\overline{f(w)}$ and let $R_h=1\otimes 1-f\otimes f$. Let 
\[
\CCC=\left\{A+\sum_{i=1}^d\left(B_i-M_{z_i}B_iM_{z_i}^\ast\right)~:~A, B_i\in\SSS^1(H^2(\DD^d)),~A, B_i\geq0\right\} \subset \SSS^1(H^2(\DD^d)).
\]
Then it is easy to verify that
\[
\LLL_c=\left\{L\in\BBB(H^2(\DD^d))~:~\Tr(LC)\geq0,~\forall C\in\CCC\right\}.
\]
Meanwhile, $\|f\|_{\SArm}\leq1$ implies
\[
\Tr(L_{\Tbf,\xi}R_h)= \la L_{\Tbf,\xi} 1,1 \ra - \la L_{\Tbf,\xi} f,f \ra = \|\xi\|^2-\|f(\Tbf)\xi\|^2\geq0,
\]
for all  cyclic commuting tuples $(\Tbf,\xi)$ of strict contractions.
As in the proof of Lemma \ref{lem: SA norm with Lc}, one checks that $\{L_{\Tbf,\xi}~:~(\Tbf,\xi)\text{ is a cyclic commuting tuple of strict contractions}\}$ is weak-$*$ dense in $\LLL_c$.
Therefore
\[
\Tr(LR_h)\geq0,\quad\forall L\in\LLL_c.
\]
By the Hahn-Banach cone separation theorem, $R_h$ is in the closure of $\CCC$. Note that for $C=A+\sum_{i=1}^d\left(B_i-M_{z_i}B_iM_{z_i}^\ast\right)\in\CCC$,
\begin{flalign*}
\la CK_w, K_z\ra=&\la AK_w, K_z\ra+\sum_{i=1}^d(1-z_i\bw_i)\la B_iK_w,K_z\ra\\
=&(1-z_1\bw_1)\left(\frac{\la AK_w, K_z\ra}{1-z_1\bw_1}+\la B_1K_w,K_z\ra\right)+\sum_{i=2}^d(1-z_i\bw_i)\la B_iK_w,K_z\ra.
\end{flalign*}
The last expression is an Agler decomposition. Suppose $\{C_n\}\subset\CCC$ and $C_n$ converges to $R_h$. Let
\[
h_n(z,w)=\sum_{i=1}^d(1-z_i\bw_i)k_{n,i}(z,w)
\]
be the Agler decomposition corresponding to $C_n$. Then $h(z,w)=\la R_hK_w,K_z\ra=\lim_{n\to\infty}h_n(z,w)$. Meanwhile, notice that by positivity, 
\[
|k_{n,i}(z,w)|^2\leq|k_{n,i}(z,z)||k_{n,i}(w,w)|\leq\frac{1}{(1-|z_i|^2)(1-|w_i|^2)}|\la C_nK_z,K_z\ra\la C_nK_w,K_w\ra|.
\]
Thus the functions $\{k_{n,i}(z,w)\}$ are locally uniformly bounded and holomorphic in $z$ and conjugate holomorphic in $w$.
The conclusion then follows from a normal family argument. This completes the proof.
\end{proof}

One way to explain the difference between our approach and Agler's is the following: we replace the duality pairing $\mathrm{Her}(\DD^d)\times\left(\mathrm{Her}(\DD^d)\right)^*$ (see Remark \ref{rem: two pairings}) with the more familiar pairing $\SSS^1(H^2(\DD^d))\times\BBB(H^2(\DD^d))$, where $\SSS^1(H^2(\DD^d))$ denotes the space of trace class operators, and the duality is given by the trace. More explicitly, assume $(\Tbf,\xi)$ is a cyclic commuting tuple of strict contractions, then
\[
(h,~\Lambda_{\Tbf,\xi})=\la h(\Tbf)\xi,\xi\ra=\sum_j\la p_j(\Tbf)\xi, q_j(\Tbf)\xi\ra=\sum_j\la L_{\Tbf,\xi}p_j,q_j\ra_2=\Tr(L_{\Tbf,\xi}R_h).
\]
Here the notation $(\cdot,\cdot)$ on the left denotes the pairing $\mathrm{Her}(\DD^d)\times\left(\mathrm{Her}(\DD^d)\right)^*$, and on the right we have the pairing $\SSS^1(H^2(\DD^d))\times\BBB(H^2(\DD^d))$. The cost is that not all hereditary functions define $R_h$, and not all cyclic commuting tuples define $L_{\Tbf,\xi}$. But the gain is that the pair $\SSS^1(H^2(\DD^d))\times\BBB(H^2(\DD^d))$ is equipped with richer structure. For many problems, the functions and operator tuples we lose are not essential for the argument. 

Taking the above one step further, the proof above of Theorem \ref{thm: Agler decomp} carries over when one replaces $\SSS^1(H^2(\DD^d))\times\BBB(H^2(\DD^d))$ with $\SSS^2(H^2(\DD^d))$ paired with itself. As was shown by techniques such as the lurking isometry argument, the Hilbert space structure can be important in solving sum-of-squares type problems. What the Hilbert structure of $\SSS^2(H^2(\DD^d))$ brings to this problem is yet to be discovered.

\section{The Dual Schur--Agler Norm}\label{sec: dual SA}
In Section \ref{sec: conv descrip of SA norm}, we gave convex descriptions of the Schur--Agler norm and the supremum norm. In this section, we show how these convex descriptions give descriptions of the pre-duals of these spaces. 
The following more general principle will be useful.

\begin{lem}
  \label{lem:cone_norm}
  Let $\mathcal{H}$ be a Hilbert space and let $\mathcal{L} \subset B(\mathcal{H})$ be a convex cone of positive operators
  containing the identity. Let $\xi \in \mathcal{H}$. For $g \in \mathcal{H}$, define
  \begin{equation*}
    \|g\|_* = \inf \{ \langle L \xi,\xi \rangle^{1/2} : L \in \mathcal{L}, L \ge g \otimes g \}.
  \end{equation*}
  Then:
  \begin{enumerate}[label=\normalfont{(\alph*)}]
    \item 
  $\|\cdot\|_*$ is a semi-norm on $\mathcal{H}$.
\item For all $f \in \mathcal{H}$,
  \begin{equation*}
    \sup \{ |\langle f,g \rangle|: g \in \mathcal{H}, \|g\|_* \le 1 \}
    = \sup \{ \langle L f ,f \rangle^{1/2}: L \in \mathcal{L}, \langle L \xi,\xi \rangle \le 1 \}.
  \end{equation*}
  
  \end{enumerate}
\end{lem}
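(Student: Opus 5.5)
For (a), we first check that the infimum is over a nonempty set and is finite. Since $I \in \mathcal{L}$ and $\mathcal{L}$ is a cone, $\|g\|^2 I \in \mathcal{L}$ and $\|g\|^2 I \ge g\otimes g$. Hence $\|g\|_* \le \|g\|\,\|\xi\|$.

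Homogeneity follows because $(\lambda g)\otimes(\lambda g) = |\lambda|^2\, g\otimes g$. Thus $L \ge g\otimes g$ if and only if $|\lambda|^2 L \ge (\lambda g)\otimes(\lambda g)$, and this gives $\|\lambda g\|_* = |\lambda|\,\|g\|_*$. The case $\lambda = 0$ is handled by $0 \in \mathcal{L}$, or by $\varepsilon I$ with $\varepsilon \to 0$.

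For the triangle inequality, let $L_1 \ge g_1\otimes g_1$ and $L_2 \ge g_2\otimes g_2$. For $t > 0$ we use the elementary operator inequality
\[
(g_1+g_2)\otimes(g_1+g_2) \le (1+t)\, g_1\otimes g_1 + (1+t^{-1})\, g_2\otimes g_2 .
\]
This follows from $|a+b|^2 \le (1+t)|a|^2 + (1+t^{-1})|b|^2$ applied with $a = \langle h, g_1\rangle$ and $b = \langle h, g_2\rangle$. Consequently $L = (1+t)L_1 + (1+t^{-1})L_2 \in \mathcal{L}$ dominates $(g_1+g_2)\otimes(g_1+g_2)$. Write $a_j = \langle L_j\xi,\xi\rangle$. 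Choosing $t = \sqrt{a_2/a_1}$ gives $\langle L\xi,\xi\rangle = (\sqrt{a_1}+\sqrt{a_2})^2$; if some $a_j = 0$, we let $t \to 0$ or $t \to \infty$. Taking infima yields $\|g_1+g_2\|_* \le \|g_1\|_* + \|g_2\|_*$.

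For (b), call the left side $A$ and the right side $B$. We prove each inequality directly, with no Hahn--Banach argument needed.

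For $A \le B$, let $\|g\|_* < 1$ and choose $L \in \mathcal{L}$ with $L \ge g\otimes g$ and $\langle L\xi,\xi\rangle \le 1$. Then
\[
|\langle f,g\rangle|^2 = \langle (g\otimes g) f, f\rangle \le \langle Lf,f\rangle \le B^2 .
\]
Vectors with $\|g\|_* \le 1$ are handled by scaling by $1-\varepsilon$ and using homogeneity.

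For $B \le A$, fix $L \in \mathcal{L}$ with $\langle L\xi,\xi\rangle \le 1$ and $\langle Lf,f\rangle > 0$. Set $g = Lf/\langle Lf,f\rangle^{1/2}$. By Cauchy--Schwarz for the positive form $\langle L\cdot,\cdot\rangle$,
\[
|\langle h, Lf\rangle|^2 \le \langle Lh,h\rangle\,\langle Lf,f\rangle ,
\]
so $g\otimes g \le L$. Hence $\|g\|_* \le \langle L\xi,\xi\rangle^{1/2} \le 1$, while $\langle f,g\rangle = \langle Lf,f\rangle^{1/2}$. This gives $A \ge \langle Lf,f\rangle^{1/2}$, and taking the supremum over $L$ gives $A \ge B$.

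The main step requiring care is the triangle inequality in (a), specifically the $t$-weighted domination of the rank-one operator and the degenerate case $a_j = 0$. The key trick in (b) is the choice $g = Lf/\langle Lf,f\rangle^{1/2}$. Equality holds even when both sides are $+\infty$, since both arguments are pointwise comparisons.
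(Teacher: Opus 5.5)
Your proof is correct and follows the paper's argument essentially step by step. In (a) you use the same weighted domination $(g_1+g_2)\otimes(g_1+g_2)\le(1+t)\,g_1\otimes g_1+(1+t^{-1})\,g_2\otimes g_2$ for the triangle inequality (the paper writes $t^2$ where you write $t$). In (b) your vector $g=Lf/\langle Lf,f\rangle^{1/2}$ is exactly the paper's $g$ defined by $A^*PA=g\otimes g$, where $L=A^*A$ and $P$ is the projection onto $\mathbb{C}Af$, and your Cauchy--Schwarz check is the same as the inequality $A^*PA\le A^*A$.
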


\begin{proof}
  (a)
  Since $\mathcal{L}$ contains the identity, $\|g\|_* < \infty$ for all $g \in \mathcal{H}$. Absolute homogeneity of $\|\cdot\|_*$
  is a direct consequence of the assumption that $\mathcal{L}$ is a cone. We have to prove the triangle inequality.

  Let $g_1, g_2 \in \mathcal{H}$ and let $L_1, L_2 \in \mathcal{L}$ with $L_i \ge g_i \otimes g_i, i=1, 2$.
  Using the basic inequality $2 a b \le (t a)^2 + (b/t)^2$ for $a,b \ge 0$ and $t > 0$, we find that
  \begin{equation*}
    (g_1 + g_2) \otimes (g_1 + g_2) \le (1+t^2) g_1 \otimes g_1 + (1 + 1/t^2) g_2 \otimes g_2 \quad \text{ for all } t > 0.
  \end{equation*}
  Thus, defining $L = (1+t^2) L_1 + (1 + 1/t^2) L_2$, we have $L\geq (g_1 + g_2) \otimes (g_1 + g_2)$.
  Moreover, $L \in \mathcal{L}$ since $\mathcal{L}$ is a convex cone.
  It follows that
  \begin{equation*}
    \|g_1 + g_2\|_*^2 \le \langle L \xi, \xi \rangle = (1+t^2) \langle L_1 \xi, \xi \rangle + (1 + 1/t^2) \langle L_2 \xi, \xi \rangle
    \quad \text{ for all } t > 0.
  \end{equation*}
  Taking the infimum over $L_1$ and $L_2$ gives
  \begin{equation*}
    \|g_1 + g_2\|_*^2 \le (1+t^2) \|g_1\|_*^2 + (1 + 1/t^2) \|g_2\|_*^2
    \quad \text{ for all } t > 0.
  \end{equation*}
  If $\|g_1\|_*, \|g_2\|_* \neq 0$, then the right-hand side is minimized by taking $t = \sqrt{ \|g_2\|_*/\|g_1\|_* }$, in which case we obtain
  \begin{equation*}
    \|g_1 + g_2\|_*^2 \le (\|g_1\|_* + \|g_2\|_*)^2.
  \end{equation*}
  If $\|g_1\|_* = 0$ or $\|g_2\|_* = 0$, then we obtain the same inequality by taking the limit $t \to \infty$ (respectively $t \to 0)$.

  (b) Let $f \in \mathcal{H}$ and set $M = \sup \{ \langle L f, f \rangle^{1/2}: L \in \mathcal{L}, \langle L \xi,\xi \rangle \le 1 \}$
  and assume that $M < \infty$.
  Let $g \in \mathcal{H}$ and let $L \in \mathcal{L}$ with $L \ge g \otimes g$. Then
  \begin{equation*}
    | \langle f,g \rangle|^2 = \langle (g \otimes g) f, f \rangle \le \langle L f ,f \rangle \le M^2 \langle L \xi,\xi \rangle.
  \end{equation*}
  Taking the infimum over $L$ gives $|\langle f,g \rangle| \le M \|g\|_*$, which proves the inequality ``$\le$''.

  Conversely, given $f \in \mathcal{H}$ and $L \in \mathcal{L}$ with $\langle L \xi, \xi \rangle \le 1$,
  write $L = A^* A$ and let $P$ be the orthogonal projection onto the one dimensional
  space $\mathbb{C} A p$. Then $A^* P A$ is a positive operator of rank at most $1$, so it is of the form
  $A^* P A = g \otimes g$ for some $g \in \mathcal{H}$. Then
  $g \otimes g \le L$ by construction, so $\|g\|_* \le 1$. Moreover,
  \begin{equation*}
    \langle L f,f \rangle = \|A f\|^2 = \langle A^* P A f, f \rangle = \langle (g \otimes g) f,f \rangle = | \langle f,g \rangle |^2.
  \end{equation*}
  This establishes the remaining inequality.
\end{proof}

In light of Lemmas \ref{lem: SA norm with Lc}, \ref{lem: SA by graded}, \ref{lem:sup_norm_Lt}, and Lemma \ref{lem:cone_norm} above, we make the following definitions.
\begin{defn}
\begin{enumerate}
    \item For $g\in H^2(\DD^d)$, define
    \[
    \LLL_c(g)=\left\{L\in\LLL_c~:~L\geq g\otimes g\right\},
    \]
    and define its \emph{dual Schur--Agler norm} to be
    \[
    \|g\|_\ast=\inf\left\{\sqrt{\la L1,1\ra_2}~:~L\in\LLL_c(g)\right\}.
    \]
    \item For any nonnegative integer $n$ and any homogeneous polynomial $q\in P_n$, define
    \[
    \LLL_c^{(n)}(q)=\left\{L=\sum_{k=0}^n L_k\in\LLL_c(q)\cap\LLL_c^{(n)}~:~L_n=q\otimes q\right\}.
    \]
    \item For $g\in H^2(\DD^d)$, define
    \[
\LLL_t(g)=\left\{L\in\LLL_t~:~L\geq g\otimes g\right\},\quad\text{and}\quad\|g\|_{H_*^\infty}=\inf\left\{\sqrt{\la L1,1\ra_2}~:~L\in\LLL_t(g)\right\}.
\]
\end{enumerate}
\end{defn}

\begin{lem}\label{lem: dual SA geq coefficients}
For $g(z)=\sum_\alpha a_\alpha z^\alpha\in H^2(\DD^d)$, we have
\[
\|g\|_\ast\geq\sup_\alpha|a_\alpha|.
\]
\end{lem}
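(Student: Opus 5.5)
We fix a multi-index $\alpha$ and show $\la L1,1\ra_2\geq|a_\alpha|^2$ for every $L\in\LLL_c(g)$; taking the infimum over $L$ and then the supremum over $\alpha$ gives the claim. The idea is to push the inequality $L\geq g\otimes g$ down to the constant function using the contractive property of the cone: $M_{z_i}^\ast LM_{z_i}\leq L$.

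First, iterating $M_{z_i}^\ast L M_{z_i}\leq L$ over the factors of $z^\alpha=z_1^{\alpha_1}\cdots z_d^{\alpha_d}$ gives
\[
M_{z^\alpha}^\ast L M_{z^\alpha}\leq L .
\]
This holds because conjugation by a fixed operator preserves the operator order, so each step of the induction remains valid. The $M_{z_i}$ commute, so the order of the factors does not matter.

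Next, we evaluate at the constant function. We have
\[
\la L1,1\ra_2\geq\la M_{z^\alpha}^\ast LM_{z^\alpha}1,1\ra_2=\la Lz^\alpha,z^\alpha\ra_2 .
\]
Since $L\geq g\otimes g$, the right-hand side is at least
\[
\la (g\otimes g)z^\alpha,z^\alpha\ra_2=|\la z^\alpha,g\ra_2|^2=|a_\alpha|^2,
\]
because the monomials $z^\alpha$ form an orthonormal basis of $H^2(\DD^d)$. Taking the infimum over $L\in\LLL_c(g)$ gives $\|g\|_\ast^2\geq|a_\alpha|^2$ for each $\alpha$, which is the statement.

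There is no real obstacle here. The only points to check are that $M_{z^\alpha}=M_{z_1}^{\alpha_1}\cdots M_{z_d}^{\alpha_d}$, and that the inequality $A\leq B$ implies $X^\ast AX\leq X^\ast BX$; both are standard. If $\LLL_c(g)$ were empty the infimum would be $+\infty$ and the claim trivial, but it is nonempty: $\|g\|_2^2 I$ lies in it.
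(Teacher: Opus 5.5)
Your proof is correct and is essentially the paper's argument. You iterate $M_{z_i}^\ast L M_{z_i}\le L$ to obtain $M_{z^\alpha}^\ast L M_{z^\alpha}\le L$, combine this with $L\ge g\otimes g$, and test against the constant function $1$ to get $|a_\alpha|^2\le \la L1,1\ra_2$. The only difference is cosmetic: the paper conjugates $g\otimes g\le L$ by $M_{z^\alpha}$ before chaining, whereas you evaluate $L\ge g\otimes g$ at $z^\alpha$ directly, which amounts to the same inequality.
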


\begin{proof}
Let $L \in \mathcal{L}$ with $L \ge g \otimes g$.
  By definition of $L$, we have
  \begin{equation*}
    M_{z^\alpha}^* (g \otimes g) M_{z^\alpha} \le M_{z^\alpha}^* L M_{z^\alpha} \le L,
  \end{equation*}
  hence
  \begin{equation*}
    |a_\alpha|^2 = | \langle g, z^\alpha \rangle|^2 = | \langle M_{z^\alpha}^* (g \otimes g) M_{z^\alpha} 1,1 \rangle|^2
    \le \langle L 1, 1 \rangle_2.
  \end{equation*}
  Taking the infimum over all admissible $L$ gives the desired inequality.
\end{proof}

The following theorem justifies the name of $\|\cdot\|_*$.
\begin{thm}
  \label{thm:dual_norm_general}
  \begin{enumerate}
      \item The dual Schur--Agler norm $\|\cdot\|_\ast$ is a norm, and for each  $f\in H^2(\mathbb{D}^d)$, we have
  \begin{equation}
    \label{eqn:SA_dual}
    \|f\|_{\SArm} = \sup \{ | \langle f,g \rangle_2|: g\in H^2(\mathbb{D}^d), \|g\|_* \leq 1 \}.
  \end{equation}
  In fact, the map
  \begin{equation*}
    \SA_d \to (H^2(\mathbb{D}^d), \|\cdot\|_*)^*, \quad f \mapsto \langle \cdot,f \rangle_2,
  \end{equation*}
  is a conjugate linear isometric isomorphism.
      \item If $q \in P_n$, then
  \begin{equation}\label{eqn: SA dual homogeneous}
    \|q\|_* = \inf \left\{ \sqrt{\langle L1,1 \rangle_2}: L \in \mathcal{L}_c^{(n)}, L \ge q \otimes q \right\}= \inf \left\{ \sqrt{\langle L 1,1 \rangle_2}: L \in \mathcal{L}_c^{(n)}(q) \right\}.
  \end{equation}
  For $p\in P_n$,
  \begin{equation}\label{eqn: SA norm by dual homogeneous}
    \|p\|_{\SArm}=\sup\left\{|\la p, q\ra_2|~:~q\in P_n,~\|q\|_\ast\leq1\right\}.
  \end{equation}
  Consequently, the map
  \[
  (P_n,~\|\cdot\|_{\SArm})\to(P_n, \|\cdot\|_*)^*,\quad p\mapsto\la \cdot, p\ra_2
  \]
  is also a conjugate linear isometric isomorphism.
  \end{enumerate}
\end{thm}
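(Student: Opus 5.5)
For part (1), I will apply Lemma \ref{lem:cone_norm} with $\mathcal H = H^2(\DD^d)$, $\mathcal L = \LLL_c$ (a convex cone of positive operators containing $I$, since $M_{z_i}^*M_{z_i}=I$) and $\xi = 1$. Part (a) of that lemma shows that $\|\cdot\|_*$ is a seminorm. Definiteness follows from Lemma \ref{lem: dual SA geq coefficients}: if $\|g\|_*=0$, then every Taylor coefficient of $g$ vanishes, so $g=0$.

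Part (b) of Lemma \ref{lem:cone_norm} gives
\[
\sup\{|\la f,g\ra_2| : \|g\|_*\le 1\}=\sup\{\la Lf,f\ra_2^{1/2} : L\in\LLL_c,\ \la L1,1\ra_2\le 1\}.
\]
By Lemma \ref{lem: SA norm with Lc} and homogeneity, the right-hand side equals $\|f\|_{\SArm}$. This proves \eqref{eqn:SA_dual}. It also shows that $f\mapsto \la\cdot,f\ra_2$ maps $\SA_d$ isometrically into the dual of $(H^2,\|\cdot\|_*)$.

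For surjectivity, let $\phi$ be a bounded functional on $(H^2,\|\cdot\|_*)$. Since $\LLL_c$ contains $I$, we have $\|g\|_*\le \|g\|_2$, so $\phi$ is $\|\cdot\|_2$-bounded. By Riesz, $\phi=\la\cdot,f\ra_2$ for some $f\in H^2$. Then \eqref{eqn:SA_dual} gives $\|f\|_{\SArm}=\|\phi\|<\infty$, so $f\in\SA_d$. Conjugate linearity is clear.

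For part (2), let $q\in P_n$ and $L\in\LLL_c$ with $L\ge q\otimes q$. I will set $L'=\sum_{k=0}^n P_kLP_k$. The computation in the proof of Lemma \ref{lem: SA by graded} shows $L'\in\LLL_c^{(n)}$, and clearly $\la L'1,1\ra=\la L1,1\ra$. Since $q\otimes q=P_n(q\otimes q)P_n$, we get $P_nLP_n\ge q\otimes q$, and hence $L'\ge q\otimes q$. This gives the first equality in \eqref{eqn: SA dual homogeneous}; the reverse inequality is trivial.

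For the second equality, I want to replace $L_n$ by exactly $q\otimes q$. Replacing $L_n$ by the smaller positive operator $q\otimes q$ keeps $L\ge q\otimes q$. It also keeps the constraints $M_{z_i}^*L_nM_{z_i}\le L_{n-1}$, because decreasing $L_n$ only decreases the left-hand side. Positivity of $L_k$ for $k<n$ and $\la L1,1\ra$ are unchanged, at least when $n\ge 1$. If $n=0$, the claim is immediate: $q$ is a constant and $L_0=|q|^2$ is optimal. Thus the two infima agree.

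For \eqref{eqn: SA norm by dual homogeneous}, the inequality ``$\ge$'' follows from \eqref{eqn:SA_dual}. For ``$\le$'', I will apply Lemma \ref{lem:cone_norm} on the finite-dimensional space $\mathcal H=P_{\le n}$. There the cone is the compressions of $\LLL_c^{(n)}$, which contains the identity $\sum_{k\le n}P_k$, since $M_{z_i}^*P_kM_{z_i}=P_{k-1}$. Combined with Lemma \ref{lem: SA by graded}, this gives
\[
\|p\|_{\SArm}=\sup\{|\la p,g\ra| : g\in P_{\le n},\ \|g\|_{*,n}\le 1\},
\]
where $\|\cdot\|_{*,n}$ denotes the dual norm computed with the restricted cone.

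The main obstacle is reducing from $g\in P_{\le n}$ to $g\in P_n$ and identifying $\|\cdot\|_{*,n}$ with $\|\cdot\|_*$. For this I will go back to the construction in the proof of Lemma \ref{lem:cone_norm}(b): the extremal $g$ satisfies $g\otimes g=A^*PA\le L$ with $\la g,p\ra$ attained. I will then replace $g$ by $P_ng$. This does not change $\la p,g\ra$, and we have $P_ng\otimes P_ng=P_n(g\otimes g)P_n\le P_nLP_n\le L$ because $L$ is graded. So $\|P_ng\|_*\le\|P_ng\|_{*,n}\le 1$ by the first equality of (2).

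Finally, the isometric isomorphism statement follows by finite-dimensional duality: $\|\cdot\|_*$ restricted to $P_n$ is a norm, \eqref{eqn: SA norm by dual homogeneous} says that $\|\cdot\|_{\SArm}$ is its dual norm, and the map is injective between spaces of equal dimension.
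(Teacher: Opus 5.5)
Your proof is correct. Part (1) coincides with the paper's argument, but part (2) is organized differently.

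The paper never proves the first equality in (2) directly. It introduces the graded infimum $\|q\|_*'$ and uses Lemma~\ref{lem: SA by graded} and Lemma~\ref{lem:cone_norm}(b) on $P_{\le n}$ to see that $\|\cdot\|_{\SArm}$ is the dual of $\|\cdot\|_*'$ on $P_n$. It then sandwiches this against part (1), using the trivial bound $\|q\|_*\le\|q\|_*'$, to obtain the homogeneous duality formula. Only then does it deduce $\|q\|_*=\|q\|_*'$ by finite-dimensional duality.

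You instead argue on the primal side. Compressing an arbitrary $L\in\LLL_c(q)$ to $\sum_{k\le n}P_kLP_k$ keeps it in $\LLL_c$ and leaves $\la L1,1\ra_2$ unchanged. It also preserves $L\ge q\otimes q$, because $P_nLP_n\ge q\otimes q$. This is more elementary, avoids any bipolar-type argument, and produces an explicit graded competitor from any feasible $L$.

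You also spell out the passage from $g\in P_{\le n}$ to $P_ng\in P_n$. The paper leaves this implicit when it records the output of Lemma~\ref{lem:cone_norm}(b) as a supremum over $q\in P_n$.

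In fact your compression gives more than you used. The same computation with $g$ in place of $q$ shows $\|P_ng\|_*\le\|g\|_*$ for every $g\in H^2(\DD^d)$. Since $\la p,g\ra_2=\la p,P_ng\ra_2$ for $p\in P_n$, the homogeneous duality formula then follows immediately from \eqref{eqn:SA_dual}. There is no need to rerun Lemma~\ref{lem:cone_norm} on $P_{\le n}$.
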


\begin{proof}
 For ease of notation, we will write $\langle \cdot,\cdot \rangle = \langle \cdot,\cdot \rangle_2$ throughout.
  It follows from Lemma \ref{lem:cone_norm} (a) and Lemma \ref{lem: dual SA geq coefficients} that $\|\cdot\|_*$ is a norm.
 To show \eqref{eqn:SA_dual}, recall from Lemma \ref{lem: SA norm with Lc} that
  \begin{equation*}
    \|f\|_{\SArm}^2 = \sup \{ \langle L f, f \rangle: L \in \mathcal{L}_c, \langle L 1, 1 \rangle \le 1 \}
  \end{equation*}
  for all $f \in H^2(\mathbb{D}^d)$.
  Thus, \eqref{eqn:SA_dual} follows from part (b) of Lemma \ref{lem:cone_norm}.

 To complete the proof of statement (1), it remains to show surjectivity the map in statement (1).
Note that $(H^2(\mathbb{D}^d),\|\cdot\|_2)$
  is contractively contained in $(H^2(\mathbb{D}^d), \|\cdot\|_*)$ (simply choose $L$ to be a scalar multiple of the identity
  in the definition of $\|\cdot\|_*$). So if $\varphi \in (H^2(\mathbb{D}^d),\|\cdot\|_*)^*$, then
  there exists $f \in H^2(\mathbb{D}^d)$ such that $\varphi(g) = \langle g,f \rangle$ for all $g \in H^2(\mathbb{D}^d)$.
  By \eqref{eqn:SA_dual}, we must have $f \in \SA_d$. This proves statement (1).

Let us temporarily define
  \begin{equation*}
    \|q\|_*' = 
    \inf \left\{ \sqrt{\langle L1,1 \rangle_2}: L \in \mathcal{L}_c^{(n)}, L \ge q \otimes q \right\}.
  \end{equation*}
  Then $\|q\|_* \le \|q\|_*'$, so with the help of Lemma \ref{lem:cone_norm} (a),
  we find that $\|\cdot\|_*'$ is a norm. Moreover, Lemma \ref{lem: SA by graded} and  Lemma \ref{lem:cone_norm} (b),
  applied to the space of polynomials of degree at most $n$ proves
  that
  \[ 
    \|p\|_{\SArm} = \sup \{ | \la p,q \ra_2|: q \in P_n, \|q\|_\ast' \le 1 \}.
  \]
  On the other hand, by statement (1), we have
  \begin{equation*}
    \|p\|_{\SArm} \ge \sup \{ |\langle p,q \rangle_2|: q \in P_n, \|q\|_* \le 1 \}.
  \end{equation*}
  Since $\|q\|_* \le \|q\|_*'$, we must have equality throughout.
  This proves \eqref{eqn: SA norm by dual homogeneous} and by duality
  also $\|q\|_* = \|q\|'_*$, i.e.\ the first equality of \eqref{eqn: SA dual homogeneous}.

  To see the second equality, note that if $n \ge 1$ and
  $L = \sum_{k=0}^n L_k$ with $L_k \in \mathcal{B}(P_k)$, then $L \in \mathcal{L}_c$ if and only if
  \begin{equation*}
    M_{z_i}^* L_k M_{z_i} \le L_{k-1} \quad \text{ for all } 1 \le k \le n, 1 \le i \le d.
  \end{equation*}
  So if $L \in \mathcal{L}_c^{(n)}$ with $L \ge q \otimes q$,
  then $L_n \ge q \otimes q$ and so
  \[
    M_{z_i}^* (q \otimes q) M_{z_i} \le M_{z_i}^* L_n M_{z_i} \le L_{n-1}.
  \]
  This shows that $L' := q \otimes q + \sum_{k=0}^{n-1} L_k  \in \mathcal{L}_c$ as well,
  and $\langle L 1,1 \rangle_2 = \langle L_0 1,1 \rangle_2 = \langle L' 1,1 \rangle_2$.
  This shows the second equality, which is trivial if $n=0$.
  This completes the proof.
\end{proof}

The normed space $(H^2(\mathbb D^d),\|\cdot\|_*)$ is not complete, since $\|\cdot\|_* \le \|\cdot\|_{2}$, but the two norms are not equivalent. For instance, since $\|\cdot\|_{\infty} \le \|\cdot\|_{\SArm}$, we see that for each reproducing kernel $K_z$,
\[
    \|K_z\|_* = \sup \{ | \la f,K_z \ra_2|: \|f\|_{\SArm} \le 1\} = 1.
\]

We let $(\SA_d)_*$ be the completion of $(H^2(\mathbb D^d),\|\cdot\|_*)$ and continue
to denote the norm with $\|\cdot\|_*$. From Theorem \ref{thm:dual_norm_general}, we see that
the Cauchy pairing extends to a pairing between functions $f \in (\SA_d)_*$ and $g \in \SA_d$, and that this pairing gives a conjugate linear isometric isomorphism between $\SA_d$ and $((\SA_d)_*)^*$.
In particular, we can equip $\SA_d$ with the weak-$*$ topology given by this duality.
We will now record a few elementary properties of the Banach space $(\SA_d)_*$.

\begin{prop}\label{prop: dual SA basics}
  \begin{enumerate}[label=\normalfont{(\alph*)}]
    \item $(\SA_d)_*$ is a Banach space of analytic functions on $\mathbb D^d$ with continuous point evaluations.
    \item The space of polynomials $\mathbb C[\mathbf{z}]$ and $\mathrm{span} \{K_z: z \in \mathbb D^d \}$ are dense in $(\SA_d)_*$.
    \item The weak-$*$ topology on $\SA_d$ given by the duality with $(\SA_d)_*$ is the unique weak-$*$ topology on $\SA_d$ in which evaluation at every point in $\mathbb D^d$ is weak-$*$ continuous.
    On bounded subsets of $\SA_d$, the weak-$*$ topology agrees with the topology of pointwise convergence on $\mathbb D^d$.
    \item The space of polynomials $\mathbb C[\mathbf{z}]$ is weak-$*$ dense in $\SA_d$.
\end{enumerate}
\end{prop}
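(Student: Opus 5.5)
The plan is to work throughout with the contractive inclusion $\|\cdot\|_*\le\|\cdot\|_2$ on $H^2(\DD^d)$ and with the duality of Theorem \ref{thm:dual_norm_general}. The recurring observation is that each functional on $H^2(\DD^d)$ of the form $g\mapsto\la g,f\ra_2$ with $f\in\SA_d$ is $\|\cdot\|_*$-bounded with norm $\|f\|_{\SArm}$. In particular, $K_z\in\SA_d$ and $|\la g,K_z\ra_2|\le\|g\|_*\|K_z\|_{\SArm}$. The elementary bound $\|K_z\|_{\SArm}\le\prod_i(1-|z_i|)^{-1}$ holds because $K_z(\Tbf)=\prod_i(I-\bz_iT_i)^{-1}$ for strict contractions.

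\textbf{(a).} For $g\in H^2$ we have $|g(z)|=|\la g,K_z\ra_2|\le\|K_z\|_{\SArm}\|g\|_*$, a bound that is locally uniform in $z$. So a $\|\cdot\|_*$-Cauchy sequence in $H^2$ converges locally uniformly on $\DD^d$. Each element of the completion therefore defines a holomorphic function, and point evaluations are continuous. The main obstacle is injectivity: a $\|\cdot\|_*$-Cauchy sequence $g_n$ converging to $0$ pointwise must have $\|g_n\|_*\to0$. To prove this, take the limit functional $\Phi$ in $((\SA_d)_*)$. We need $\Phi=0$ as an element of the completion, i.e.\ that $\Phi$ is annihilated by the dual $\SA_d$. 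For $f\in\SA_d$, $\la g_n,f\ra_2$ converges, and we want the limit to be $0$. First treat $f=K_w$, where this is pointwise convergence. Then pass to all $f$: by Lemma \ref{lem:SA_approx}, Fejér means $p_m$ of $f$ satisfy $\|p_m\|_{\SArm}\le\|f\|_{\SArm}$, and $\la g_n,p_m\ra$ involves only finitely many Taylor coefficients. Those coefficients tend to $0$, since by Lemma \ref{lem: dual SA geq coefficients} they converge and they are Cauchy integrals of the locally uniformly convergent $g_n$. A $3\varepsilon$ argument then needs $\la g_n-g_k,f-p_m\ra\to0$ uniformly, which is not automatic. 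Instead I would argue that the span of $K_w$ is norm-dense in the set of functionals, which is circular. So the cleanest route is to show $\la \Phi, p_m\ra\to\la\Phi,f\ra$, using that Fejér means act on $(\SA_d)_*$ by the adjoint integral operator, with $\Phi\mapsto \Phi*F_m$ contractive and converging in norm on the dense set $H^2$.

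\textbf{(b).} Polynomials are $\|\cdot\|_2$-dense in $H^2$, hence $\|\cdot\|_*$-dense, hence dense in the completion. For kernels, apply Hahn--Banach: a functional annihilating all $K_z$ is given by some $f\in\SA_d$ with $\overline{f(z)}=\la K_z,f\ra=0$ for all $z$, so $f=0$.

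\textbf{(c),(d).} On bounded sets of $\SA_d$, weak-$*$ convergence implies pointwise convergence by pairing with $K_z$. Conversely, a pointwise convergent bounded net converges weak-$*$ on the dense set spanned by the $K_z$, hence everywhere by boundedness. For uniqueness, any weak-$*$ topology (induced by some predual) in which point evaluations are continuous has the $K_z$ in its predual, and they span a dense subspace by (b). Both preduals then coincide as closed spans, by Krein--Smulian, since the topologies agree on bounded sets. For (d), Fejér means $p_m\to f$ boundedly and pointwise by Lemma \ref{lem:SA_approx}, hence weak-$*$ by (c).
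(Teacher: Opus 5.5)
Your proposal is correct, but it is organized differently from the paper's proof.

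\textbf{Comparison of routes.} The paper proves (b), then (c), then (d), and obtains (a) last. Lemma \ref{lem: dual SA geq coefficients} gives a contractive map from $(\SA_d)_*$ into the coefficient space $c_0$. This map is injective because its adjoint $\ell^1\to\SA_d$ has range containing all polynomials, which are weak-$*$ dense by (d).

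You instead prove (a) first and by hand. The bound $|g(z)|\le\|K_z\|_{\SArm}\|g\|_*$ turns $\|\cdot\|_*$-Cauchy sequences into locally uniformly convergent ones, and injectivity comes from a Fej\'er-mean argument.

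For density of the kernels in (b), you use Hahn--Banach together with the surjectivity in Theorem \ref{thm:dual_norm_general}. The paper only needs that kernels are dense in $H^2(\DD^d)$, which is more elementary.

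For uniqueness in (c), the natural reading of your argument is that every predual in which evaluations are continuous is the closed span of the evaluations in $\SA_d^*$. That avoids the compactness and Krein--Smulian argument the paper uses. The paper's ordering is shorter; yours makes (a) independent of the weak-$*$ statements.

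\textbf{Steps to tighten.} In (a), the $3\varepsilon$ argument you set aside does work, and it is not circular; it is the same estimate you later use in (c). Since $\|f-p_m\|_{\SArm}\le 2\|f\|_{\SArm}$,
\[
|\la \Phi, f-p_m\ra| \le 2\|f\|_{\SArm}\,\|\Phi-g_n\|_*+|\la g_n,f-p_m\ra_2| .
\]
So fix $n$ large and let $m\to\infty$, using $p_m\to f$ in $H^2(\DD^d)$. If you keep the Fej\'er-operator route instead, the asserted contractivity needs a line. With $\sigma_m$ the Fej\'er-mean operator,
\[
\|\sigma_m g\|_*=\sup\{|\la g,\sigma_m f\ra_2|:\|f\|_{\SArm}\le 1\}\le\|g\|_* ,
\]
by Theorem \ref{thm:dual_norm_general} and Lemma \ref{lem:SA_approx}.

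In (c), part (b) only gives density of the evaluations in $(\SA_d)_*$, not in an arbitrary predual $Y$. You can close this in either of two ways.
\begin{enumerate}
\item Rerun your Hahn--Banach argument in $Y$: a functional on $Y$ is some $f\in\SA_d$, and annihilating all evaluations forces $f=0$. Then $Y$ and $(\SA_d)_*$ are both the closed span of the evaluations, and Krein--Smulian is not needed.
\item Knowing only $(\SA_d)_*\subseteq Y$, justify that the two topologies agree on bounded sets via the compact-to-Hausdorff argument on the unit ball, as the paper does, and then invoke Krein--Smulian.
\end{enumerate}
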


\begin{proof}
    We prove the statements in different order.
    
    (b) This follows since both spaces are dense in $(H^2(\mathbb D^d),\|\cdot\|_2)$
    and  $(H^2(\mathbb D^d),\|\cdot\|_2)$ is densely and contractively contained in $(\SA_d)_*$.

    (c) Since $K_z \in (\SA_d)_*$ for all $z \in \mathbb D^d$, point evaluations
    are weak-$*$ continuous. Moreover, by the density of the linear span
    of kernel functions in $(\SA_d)_*$, a bounded net converges weak-$*$ if and only if it converges pointwise on $\mathbb D^d$.
    
    The uniqueness statement is a general principle about weak-$*$ topologies
    on function spaces: If $\tau_1$ is the weak-$*$ topology given by $(\SA_d)_*$ and $\tau_2$
    is another weak-$*$ topology with continuous point evaluations, then the identity mapping
    $(\SA_d, \tau_2) \to (\SA_d,\tau_1)$ is continuous on the unit ball.
    Since the unit ball is $\tau_2$-compact and $\tau_1$ is Hausdorff, the identity mapping
    is a homeomorphism between the unit balls.
    Then the Krein--Smulian theorem implies that the identity mapping
    is a homeomorphism on all of $\SA_d$, i.e.\ $\tau_2 = \tau_1$.

    (d) Let $f \in \SA_d$ and let $p_n$ be the sequence of Fej\'er means of $f$. By Lemma \ref{lem:SA_approx}, we have $\|p_n\|_{\SArm} \le \|f\|_{\SArm}$ for all $n \in \mathbb N$ and $(p_n)$
    converges to $f$ pointwise on $\mathbb D^d$. By (c), this implies that $(p_n)$ converges to $f$ weak-$*$.

    (a)
    Let $c_0$ be the space of analytic functions on $\mathbb D^d$ whose Taylor coefficients
    converge to $0$, equipped with the supremum norm of the coefficients.
    By Lemma \ref{lem: dual SA geq coefficients}, $(H^2(\mathbb D^d),\|\cdot\|_*)$ is contractively contained in $c_0$, hence we obtain a contractive map
    $\Phi: (\SA_d)_* \to c_0$ that extends the inclusion. We claim that $\Phi$ is injective.

    By duality, it suffices to show that the adjoint $\Phi^*: \ell^1 \to \SA_d$ has weak-$*$ dense range.
    But this follows from weak-$*$ density of the polynomials in $\SA_d$, i.e.\ part (d). Thus, $\Phi$ is injective,
    so $(\SA_d)_*$ is a space of analytic functions contractively contained in $c_0$, which also
    implies continuity of point evaluations.
\end{proof}

Classically,  $H^\infty(\mathbb{D}^d)$ is the dual space of $L^1(\mathbb{T}^d) / (H^\infty(\mathbb{D}^d)_\bot$,
where the duality is given by the Cauchy pairing.
For comparison with the previous result, we record a different expression for the (pre-)dual $H^\infty$ norm.
\begin{prop}\label{prop: dual sup norm}
The function $\|\cdot\|_{H^\infty_*}$ is a norm, and for each $f \in H^2(\mathbb{D}^d)$, we have
  \begin{equation*}
    \|f\|_\infty = \sup \{ |\langle f,g \rangle_2|: g \in H^2(\mathbb{D}^d), \|g\|_{H^\infty_*} \leq 1 \}.
  \end{equation*}
\end{prop}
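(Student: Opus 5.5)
The plan is to apply Lemma \ref{lem:cone_norm} with $\mathcal{H} = H^2(\DD^d)$, $\mathcal{L} = \LLL_t$ and $\xi = 1$, and then to invoke Lemma \ref{lem:sup_norm_Lt}.

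First, $\LLL_t$ is a convex cone of positive operators. If $L, L' \in \LLL_t$ and $s,t \ge 0$, then $sL + tL' \ge 0$, and the relation $M_{z_i}^* (sL+tL') M_{z_i} = sL + tL'$ is linear. The cone also contains the identity, since each $M_{z_i}$ is an isometry on $H^2(\DD^d)$, so $M_{z_i}^* I M_{z_i} = I$. Lemma \ref{lem:cone_norm}(a) therefore shows that $\|\cdot\|_{H^\infty_*}$ is a seminorm.

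Second, we need definiteness. Since $\LLL_t \subset \LLL_c$, every admissible $L$ in the definition of $\|g\|_{H^\infty_*}$ is also admissible in the definition of $\|g\|_*$. Hence $\|g\|_{H^\infty_*} \ge \|g\|_*$. By Lemma \ref{lem: dual SA geq coefficients}, $\|g\|_* \ge \sup_\alpha |a_\alpha|$. So $\|g\|_{H^\infty_*} = 0$ forces every Taylor coefficient of $g$ to vanish, that is, $g = 0$. Alternatively, one can repeat the short proof of Lemma \ref{lem: dual SA geq coefficients} directly, which only uses $M_{z^\alpha}^* L M_{z^\alpha} \le L$.

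Third, Lemma \ref{lem:cone_norm}(b) gives, for every $f \in H^2(\DD^d)$,
\[
\sup\{|\la f,g\ra_2| : \|g\|_{H^\infty_*}\le 1\} = \sup\{\la Lf,f\ra_2^{1/2} : L\in\LLL_t,\ \la L1,1\ra_2\le 1\}.
\]
By homogeneity of the cone, the right-hand side equals the square root of $\sup\{\la Lf,f\ra_2/\la L1,1\ra_2 : L \in \LLL_t,\ L \ne 0\}$. Here we use that $L \ne 0$ in $\LLL_t$ implies $\la L1,1\ra_2 > 0$. This holds because $L \in \LLL_c$, as noted at the start of the proof of Lemma \ref{lem: SA norm with Lc}. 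One could also argue directly: $L = T_w$ with $w \ge 0$, so $\la L1,1\ra_2 = \int w\,d\sigma > 0$. By Lemma \ref{lem:sup_norm_Lt}, this supremum is $\|f\|_\infty^2$, where both sides may be infinite.

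No step is a genuine obstacle. The only point needing care is the zero-denominator and rescaling issue when passing between the normalized supremum and the ratio form, and this is handled as just described.
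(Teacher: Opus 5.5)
Your proposal is correct and follows the paper's proof. The paper gets the seminorm from Lemma \ref{lem:cone_norm}(a), definiteness from $\LLL_t\subset\LLL_c$ (so $\|\cdot\|_{H^\infty_*}\ge\|\cdot\|_*$), and the duality from Lemma \ref{lem:cone_norm}(b) together with Lemma \ref{lem:sup_norm_Lt}; your added checks that $I\in\LLL_t$ and that the normalized supremum equals the ratio form (because $\la L1,1\ra_2>0$ for $0\neq L\in\LLL_t$) are details the paper leaves implicit.
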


\begin{proof}
  The fact that $\|\cdot\|_{H^\infty_*}$ is a semi-norm follows from Lemma \ref{lem:cone_norm} (a).
  Since $\mathcal{L}_t \subset \mathcal{L}_c$, we have $\|\cdot\|_{H^\infty_*} \ge \|\cdot\|_*$, so it is a norm.
  The duality relation follows from Lemma \ref{lem:cone_norm} (b) and Lemma \ref{lem:sup_norm_Lt}.
\end{proof}

\begin{exam}
  Essentially the same operator $L$ as in Example \ref{exa:TTO} shows that the analogue of Theorem \ref{thm:dual_norm_general} (2) for $\|\cdot\|_{H^\infty_*}$ fails, i.e.\ if $q \in P_n$, then
  \begin{equation}
    \label{eqn:dual_TTO}
    \inf \left\{ \langle L1,1 \rangle^{1/2}: L = \sum_{k=1}^n L_k : L_k \text{ satisfy } \eqref{eqn:TTO}, L \ge q \otimes q \right\}
  \end{equation}
  can be strictly smaller than $\|q\|_{H^\infty_*}$. Indeed, let
  \begin{equation*}
    q(z) = z_1^2 + z_2^2 + z_3^2 -\frac{1}{2} (z_1 z_2 + z_2 z_3 + z_1 z_3)
  \end{equation*}
  be the polynomial of the Holbrook example. If $p$ is the Kaijser--Varopoulos polynomial, then $\|p\|_\infty = 5$
  and $\langle p,q \rangle_2 = 6$, so $\|q\|_{H^\infty_*} \ge \frac{6}{5}$.
  But if $L$ is the operator of Example \ref{exa:TTO} and $L' = \frac{5}{4} L$, then one checks that $L' \ge q \otimes q$ (the difference
  $L' - q \otimes q$ is a positive scalar multiple of a rank $2$ projection). Thus, the infimum in \eqref{eqn:dual_TTO}
  is at most $\sqrt{\frac{5}{4}} < \frac{6}{5}$.
\end{exam}

\section{Methods of Constructing $L$}\label{sec: methods of constructing L}
For a homogeneous polynomial $q\in P_n$, we provide several methods of constructing operators $L\in\LLL_c^{(n)}(q)$. This leads to upper bounds of the dual Schur--Agler norm $\|q\|_\ast$, which, by Theorem \ref{thm:dual_norm_general}, results in lower bounds of the Schur--Agler norm. We show how our methods recover  the counterexamples constructed in \cite{CrDa75}\cite{Dix76}\cite{Hol01}\cite{Varo74}; see the Appendix for a detailed explanation in terms of their $L$ operators. Finally, we give explicit computations of the Schur--Agler norms and dual Schur--Agler norms of Kaijser-Varopoulos-Holbrook type polynomials.
In the next section, we will focus on two of the resulting lower bounds, which are in the form of weak products. 

The Hankel operators play an important role in our construction. Recall the following important identity:
\begin{equation}
M_{z_i}^\ast\Gamma_q=\Gamma_qM_{z_i},\quad\forall i=1,\cdots,d.
\end{equation}

For $q\in P_n$ and $0\leq k\leq n-1$, let us define
\[
A_k(q)=\left\|\Gamma_q\big|_{P_k}\right\|;\quad B_k(q)=\max_{1\leq i\leq d}\left\|M_{z_i}^\ast\Gamma_q\big|_{P_k}\right\|.
\]
\begin{description}
    \item[Method 1:] for $0\leq k\leq n-1$, define
    \[
    L_1=\Gamma_{\widehat q}^\ast\Gamma_{\widehat q}\big|_{P_{\geq k+1}}+A_k(q)^2\cdot P_{\leq k};
    \]
    \item[Method 2:] for $0\leq k\leq n-1$, define
    \[
    L_2=\Gamma_{\widehat q}^\ast\Gamma_{\widehat q}\big|_{P_{\geq k+1}}+B_k(q)^2\cdot P_{\leq k};
    \]
\end{description}

\begin{lem}\label{lem: method 1 2}
Let $L_1, L_2$ be as above. Then $L_1, L_2\in\LLL_c^{(n)}(q)$. As a consequence, 
\[
  \|q\|_\ast\leq \min_{0 \le k \le n-1} B_k(q)\leq \min_{0 \le k \le n-1} A_k(q).
\]
\end{lem}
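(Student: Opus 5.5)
The plan is to check the graded criterion for $\LLL_c^{(n)}$ directly, using the cross-commutation identity $M_{z_i}^\ast\Gamma=\Gamma M_{z_i}$. I read $\Gamma_{\widehat q}^\ast\Gamma_{\widehat q}\big|_{P_{\geq k+1}}$ as the compression $P_{\geq k+1}\Gamma_{\widehat q}^\ast\Gamma_{\widehat q}P_{\geq k+1}$.

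\emph{Grading.} Since $q\in P_n$, the Hankel operator $\Gamma_{\widehat q}$ maps $P_j$ into $P_{n-j}$ for $j\le n$ and vanishes on $P_j$ for $j>n$. This follows from $\la\Gamma_\phi f,g\ra_2=\la\phi,\widehat f g\ra_2$ and homogeneity. Hence $\Gamma_{\widehat q}^\ast\Gamma_{\widehat q}=\sum_{j=0}^n G_j$ with $G_j\in\BBB(P_j)$ positive. So $L_1$ and $L_2$ have the form $\sum_{j=0}^n L_j$ with $L_j=G_j$ for $j\ge k+1$ and $L_j=c\,P_j$ for $j\le k$, where $c=A_k(q)^2$ or $B_k(q)^2$.

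\emph{Top piece.} I compute $G_n$. For $f\in P_n$, $\Gamma_{\widehat q}f$ is the constant $\la \widehat q,\widehat f\ra_2=\overline{\la q,f\ra_2}$, up to the harmless conjugation. Therefore $G_n f=\la f,q\ra_2\, q$, that is, $L_n=q\otimes q$.

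\emph{Cone inequalities.} It remains to verify $M_{z_i}^\ast L_jM_{z_i}\le L_{j-1}$ for $1\le j\le n$, in three cases.
\begin{itemize}
\item For $j\ge k+2$, cross-commutation gives $M_{z_i}^\ast\Gamma^\ast\Gamma M_{z_i}=(\Gamma M_{z_i})^\ast(\Gamma M_{z_i})=\Gamma^\ast M_{z_i}M_{z_i}^\ast\Gamma\le\Gamma^\ast\Gamma$. Compressing both sides to $P_{j-1}$ (which $M_{z_i}$ maps into $P_j$) gives $M_{z_i}^\ast G_jM_{z_i}\le G_{j-1}$.
\item For $j\le k$, we have $M_{z_i}^\ast P_jM_{z_i}=P_{j-1}$ on $P_{j-1}$, since $M_{z_i}$ is an isometry. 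So the inequality holds with equality.
\item The transition $j=k+1$ is the key case. On $P_k$ we need $(\Gamma_{\widehat q}M_{z_i})^\ast(\Gamma_{\widehat q}M_{z_i})\big|_{P_k}\le c\,P_k$, i.e. $\|M_{z_i}^\ast\Gamma_{\widehat q}|_{P_k}\|^2\le c$. For Method 2 this is exactly the definition of $B_k$, once $\widehat q$ is traded for $q$. For Method 1 it follows from $\|M_{z_i}^\ast\|\le1$ and the definition of $A_k$.
\end{itemize}

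\emph{Trading $\widehat q$ for $q$.} Let $Cf=\widehat f$, an antiunitary on $H^2(\DD^d)$ that preserves each $P_j$ and commutes with $M_{z_i}$ and $M_{z_i}^\ast$ (real coefficients of $z_i$). From the pairing formula one checks $\Gamma_{\widehat q}=C\Gamma_qC$. Hence $\|\Gamma_{\widehat q}|_{P_k}\|=\|\Gamma_q|_{P_k}\|$ and $\|M_{z_i}^\ast\Gamma_{\widehat q}|_{P_k}\|=\|M_{z_i}^\ast\Gamma_q|_{P_k}\|$. This conjugation bookkeeping is the step I expect to need the most care; everything else is formal.

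\emph{Conclusion.} Positivity of each $L_j$ is clear, and $L\ge q\otimes q$ holds because $L_n=q\otimes q$ and the other blocks are positive. So $L_1,L_2\in\LLL_c^{(n)}(q)$. Since $k\ge0$, the block $L_0$ equals $c\,P_0$, so $\la L1,1\ra_2=c$. By the definition of $\|\cdot\|_\ast$ (or Theorem \ref{thm:dual_norm_general}(2)), this gives $\|q\|_\ast\le B_k(q)$ for every $0\le k\le n-1$. Finally, $B_k(q)\le A_k(q)$ because $\|M_{z_i}^\ast\|\le1$; taking minima over $k$ yields the stated chain of inequalities.
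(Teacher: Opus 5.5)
Your proof is correct and follows essentially the same route as the paper. Both arguments use the cross-commutation $M_{z_i}^*\Gamma_{\widehat q}=\Gamma_{\widehat q}M_{z_i}$ to get $M_{z_i}^* L M_{z_i}\le L$, and in both the only nontrivial step is the transition block $P_k$, where $\|M_{z_i}^*\Gamma_{\widehat q}|_{P_k}\|^2\le B_k(q)^2\le A_k(q)^2$. The differences are presentational: you check the inequality degree by degree through the graded criterion instead of as one operator inequality, and you justify, via the antiunitary $f\mapsto\widehat f$, the identification of the norms for $\widehat q$ and $q$ that the paper only asserts.
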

\begin{proof}
It is clear that each $L_i$ is positive and reduces $P_l$ for any $l$. An elementary computation (cf.\ \eqref{eqn:Gamma_star_Gamma} in the appendix) shows that $L_i|_{P_n}=q\otimes q$ and $L_i|_{P_{\le n}^\perp}=0$. Thus it remains to show the inequality $M_{z_j}^\ast L_i M_{z_j}\leq L_i$.
\begin{flalign*}
  M_{z_j}^\ast L_1 M_{z_j}=&M_{z_j}^\ast\left(\Gamma_{\widehat q}^\ast\Gamma_{\widehat q}\big|_{P_{\geq k+1}}+A_k(q)^2\cdot P_{\leq k}\right)M_{z_j}\\
  =&M_{z_j}^\ast\Gamma_{\widehat q}^\ast\Gamma_{\widehat q} M_{z_j}\big|_{P_{\geq k}}+A_k(q)^2\cdot P_{\leq k-1}\\
  =&\Gamma_{\widehat q}^\ast M_{z_j}M_{z_j}^\ast\Gamma_{\widehat q}\big|_{P_{\geq k}}+A_k(q)^2\cdot P_{\leq k-1}\\
  =&\Gamma_{\widehat q}^\ast M_{z_j}M_{z_j}^\ast\Gamma_{\widehat q}\big|_{P_{\geq k+1}}+\Gamma_{\widehat q}^\ast M_{z_j}M_{z_j}^\ast\Gamma_{\widehat q}\big|_{P_k}+A_k(q)^2\cdot P_{\leq k-1}\\
  \leq&\Gamma_{\widehat q}^\ast\Gamma_{\widehat q}\big|_{P_{\geq k+1}}+\left\|\Gamma_{\widehat{q}}\big|_{P_k}\right\|^2\cdot P_k+A_k(q)^2\cdot P_{\leq k-1}\\
=& L_1.
\end{flalign*}
Here, we used that $\| \Gamma_{\widehat{q}} \big|_{P_k}\| = \| \Gamma_q \big|_{P_k}\|=A_k(q)$.
A similar argument shows that $M_{z_j}^\ast L_2 M_{z_j}\leq L_2$. This completes the proof.
\end{proof}

We recall two definitions from Section \ref{sec: pre}.

\begin{defn}\label{defn: weak product and two norms}
Define
\[
\|p\|_{P_k\odot P_l}=\inf\left\{\sum_{i=1}^m\|f_i\|_2\cdot\|g_i\|_2~:~f_i\in P_k,~g_i\in P_l,\text{ and }p=\sum_{i=1}^m f_ig_i\right\},\quad\forall p\in P_{k+l},
\]
and
\[
\|p\|_{Z\odot P_k\odot P_l}=\inf\left\{\sum_{i=1}^d\|f_i\|_{P_k\odot P_l}~:~f_i\in P_{k+l},\text{ and }p=\sum_iz_if_i\right\},\quad\forall p\in P_{k+l+1}.
\]
For any $p\in P_n$, let us also define
\begin{equation*}
\vertiii{p}_1:=\max_{0\leq k\leq n}\|p\|_{P_k\odot P_{n-k}},
\end{equation*}
and
\begin{equation*}
\vertiii{p}_2:=\max_{0\leq k\leq n-1}\|p\|_{Z\odot P_k\odot P_{n-k-1}}.
\end{equation*}
\end{defn}

\begin{thm}\label{thm: norm 1 and 2}
For any $p\in P_n$,
\begin{equation}\label{eqn: SA lower bound}
\|p\|_{\SArm}\geq\vertiii{p}_2\geq\vertiii{p}_1.
\end{equation}
\end{thm}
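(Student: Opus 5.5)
The plan is to combine the duality of Theorem \ref{thm:dual_norm_general} (2) with Lemma \ref{lem: method 1 2} and the Hankel/weak product duality of Lemma \ref{lem:Hankel_WP}. By \eqref{eqn: SA norm by dual homogeneous}, for $p\in P_n$ we have $\|p\|_{\SArm}=\sup\{|\la p,q\ra_2|: q\in P_n,\ \|q\|_*\le 1\}$. Lemma \ref{lem: method 1 2} gives $\|q\|_*\le B_k(q)$ for every $0\le k\le n-1$. Hence every $q\in P_n$ with $B_k(q)\le 1$ is admissible in the supremum, and so
\[
\|p\|_{\SArm}\ge \sup\{|\la p,q\ra_2|: q\in P_n,\ B_k(q)\le 1\}.
\]
By Lemma \ref{lem:Hankel_WP} (2), the right-hand side equals $\|p\|_{Z\odot P_k\odot P_{n-k-1}}$, provided $B_k(q)$ coincides with $\|q\|'_{\Han_k}=\max_i\|\Gamma_q M_{z_i}|_{P_k}\|$. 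Taking the maximum over $0\le k\le n-1$ then yields $\|p\|_{\SArm}\ge\vertiii{p}_2$.

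To identify $B_k(q)$ with $\|q\|'_{\Han_k}$, use \eqref{eqn: Hankel Mz cross commute}: $M_{z_i}^*\Gamma_q=\Gamma_qM_{z_i}$, so $\|M_{z_i}^*\Gamma_q|_{P_k}\|=\|\Gamma_qM_{z_i}|_{P_k}\|$ and therefore $B_k(q)=\|q\|'_{\Han_k}$. The only delicate point is bookkeeping about whether Lemma \ref{lem: method 1 2} is phrased with $\Gamma_q$ or $\Gamma_{\widehat q}$. Since $\|\Gamma_{\widehat q}|_{P_k}\|=\|\Gamma_q|_{P_k}\|$, and the analogous identity holds after composing with $M_{z_i}$ (conjugation by $f\mapsto \widehat f$ intertwines both operators and preserves norms and each $P_k$), this causes no trouble.

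For the second inequality $\vertiii{p}_2\ge\vertiii{p}_1$, I would use \eqref{eqn:WP_easy_estimate}: $\|p\|_{P_k\odot P_{l}}\le\|p\|_{Z\odot P_k\odot P_{l-1}}$ for $l\ge1$. This gives $\|p\|_{P_k\odot P_{n-k}}\le\vertiii{p}_2$ for $0\le k\le n-1$.

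The endpoint $k=n$, where $\|p\|_{P_n\odot P_0}=\|p\|_2$, needs separate care. By the symmetry $fg=gf$ it equals $\|p\|_{P_0\odot P_n}$, which is already covered by the case $k=0$. Alternatively, one can argue directly: writing $p=\sum_i z_if_i$ with $\|f_i\|_{P_0\odot P_{n-1}}=\|f_i\|_2$, we get $\|p\|_2\le\sum_i\|z_if_i\|_2=\sum_i\|f_i\|_2$. Either way $\vertiii{p}_1\le\vertiii{p}_2$. The main obstacle is just confirming the inequality \eqref{eqn:WP_easy_estimate}, which follows since a factorization $p=\sum_i z_i\sum_j f_{ij}g_{ij}$ regroups as $\sum_{i,j}f_{ij}(z_ig_{ij})$ with $\|z_ig_{ij}\|_2=\|g_{ij}\|_2$.
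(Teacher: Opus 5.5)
Your proposal is correct and is the paper's proof: duality from Theorem \ref{thm:dual_norm_general}(2), the bound $\|q\|_*\le B_k(q)$ from Lemma \ref{lem: method 1 2}, and Lemma \ref{lem:Hankel_WP}(2) via $B_k(q)=\|q\|'_{\Han_k}$ (from $M_{z_i}^*\Gamma_q=\Gamma_q M_{z_i}$) give the first inequality, while \eqref{eqn:WP_easy_estimate} plus the symmetry $\|p\|_{P_k\odot P_{n-k}}=\|p\|_{P_{n-k}\odot P_k}$ give the second. Your checks on $\Gamma_{\widehat q}$ versus $\Gamma_q$ and your regrouping proof of \eqref{eqn:WP_easy_estimate} just spell out details the paper leaves implicit.
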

\begin{proof}
  By Theorem \ref{thm:dual_norm_general}, Lemma \ref{lem: method 1 2} and Lemma \ref{lem:Hankel_WP}, we have
  \begin{equation*}
    \|p\|_{\SArm} = \sup_{q \in P_n \setminus \{0\}} \frac{| \langle p,q \rangle_2|}{\|q\|_*}
    \ge  \max_{0 \le k \le n-1} \sup_{q \in P_n \setminus \{0\}} \frac{| \langle p,q \rangle_2|}{B_k(q)}
    = \vertiii{p}_2.
  \end{equation*}
  The second inequality follows from \eqref{eqn:WP_easy_estimate} and the observation that $\|p\|_{P_k \odot P_{n-k}} = \|p\|_{P_{n-k} \odot P_k}$.
\end{proof}

As we will show in Section \ref{sec: weak product norm}, Method 1 is a simplification of Method 2, which sacrifices a constant depending only on $d$: $\vertiii{p}_1\approx_d\vertiii{p}_2$.
From the interpretation in the Appendix, the $L$ operators corresponding to \cite{CrDa75}\cite{Dix76}\cite{Varo74} match Method 2. In particular, the lower bound $\vertiii{\cdot}_2$ is bigger than  Dixon's lower bound in \cite{Dix76}. The weak product expressions of $\vertiii{\cdot}_1$ and $\vertiii{\cdot}_2$ also matches the general heuristic that polynomials with large Schur--Agler norms should be difficult to factor. 

However, we will show in Section \ref{sec: weak product norm} that the two lower bounds above are still far from optimal: for fixed $d$, $\vertiii{\cdot}_1$ and $\vertiii{\cdot}_2$ are comparable to the Hardy space norm $\|\cdot\|_2$.

Below we list a few more methods of constructing $L$ operators. Assume $q\in P_n$.
\begin{description}
    \item[Method 3:] For $0\leq k\leq n-1$, define
    \[
      C_k(q)=\max_{1\leq i\leq d}\left\|M_{z_i}^\ast|_{\Gamma_{\widehat q}(P_k)}\right\|=\max_{1\leq i\leq d}\left\|M_{z_i}^\ast|_{\Gamma_{q}(P_k)}\right\|,
    \]
    and
    \[
    L_3=q\otimes q+\sum_{k=0}^{n-1}C_k(q)^2\cdots C_{n-1}(q)^2\Gamma_{\widehat{q}}^\ast\Gamma_{\widehat{q}}\big|_{P_k}.
    \]
    Then $L_3\in\LLL_c^{(n)}(q)$. This gives the estimate
    \[
    \|q\|_\ast\leq \|q\|_2\cdot\prod_{k=0}^{n-1}C_k(q).
    \]
    Note that each $C_k(q)\leq1$. Also, in Remark \ref{rem: M3 for reproducing kernel} we show that Method 3 recovers the dual Schur-Agler norm of the reproducing kernels, while the previous two methods do not. Consequently, Method 3 results in a lower bound of $\|\cdot\|_{SA}$ bigger than $\|\cdot\|_\infty$.
    \item[Method 4:] One may combine Methods 2 and 3. Namely, for $0\leq k\leq n-1$, define
    \[
      L_4=q\otimes q+\sum_{j=k+1}^{n-1}C_j(q)^2\cdots C_{n-1}(q)^2\Gamma_{\widehat q}^\ast\Gamma_{\widehat q}\big|_{P_j}+B_k(q)^2C_{k+1}(q)^2\cdots C_{n-1}(q)^2\cdot P_{\leq k}.
    \]
     Then $L_4\in\LLL_c^{(n)}(q)$, from which we have
     \[
     \|q\|_\ast\leq B_k(q)C_{k+1}(q)\cdots C_{n-1}(q),\quad 0\leq k\leq n-1.
     \]
     \item[Method 5:] Method 3 can be generalized. Assume $E_k\in\BBB(P_k), 0\leq k\leq n$ are positive such that
    \[
      E_0=I,\quad M_{z_i}E_kM_{z_i}^\ast \Big|_{\Gamma_{\widehat{q}}(P_{n-k-1})} \leq E_{k+1}.
    \]
    Then 
    \[
      L_5=\sum_{k=0}^n\Gamma_{\widehat{q}}^\ast E_{n-k}\Gamma_{\widehat q}\big|_{P_k}\in\LLL_c^{(n)}(q).
    \]
    This results in the estimate
    \[
    \|q\|_\ast\leq\sqrt{\la E_n\widehat{q},\widehat{q}\ra_2}.
    \]
\end{description}
Likewise, one may also combine Methods 2 and 5. We leave the details to the interested reader.

As explained previously, Method 2 recovers the counterexamples in \cite{CrDa75}\cite{Dix76}\cite{Varo74}. The Holbrook counterexample is more delicate. As we will explain in Example \ref{Exam: Hol}, the relevant dual polynomial is $q(z)=z_1^2+z_2^2+z_3^2-\frac{1}{2}(z_1z_2+z_2z_3+z_1z_3)$, which satisfies $\|q\|_\ast=1$.

We compute 
\[
\begin{tabular}{|c|c|c|}
\hline
k&0&1\\
\hline
$A_k(q)$  & $\frac{\sqrt{15}}{2}$ & $\frac{3}{2}$\\
\hline
$B_k(q)$   & $\sqrt{\frac{3}{2}}$ & $\sqrt{\frac{3}{2}}$ \\
\hline
$C_k(q)$ &$\sqrt{\frac{2}{5}}$& $\sqrt{\frac{2}{3}}$\\
\hline
\end{tabular}~,\quad\|q\|_2= \frac{\sqrt{15}}{2}.
\]
Thus the best lower bound given by Method 1 and 2 for $\|q\|_*$ is $\sqrt{\frac{3}{2}}$, which is larger than $\|q\|_\ast=1$. However, $C_0(q)C_1(q)\|q\|_2=B_0(q)C_1(q)=1$. Thus Methods 3 and 4 recover $\|q\|_\ast$. 
The key point which makes $C_1(q) < 1$ is that $\Gamma_q(P_1)$ is the $2$-dimensional space
of degree $1$ polynomials whose coefficients sum to zero.
We end this section with an example.

\begin{exam}[Kaijser-Varopoulos-Holbrook type polynomials]
We give an example for which the Schur--Agler norm and the dual Schur--Agler norm is explicitly computed. For $t\in\CC$, write
\[
p_t(z)=\sum_{i=1}^dz_i^2+\frac{t}{2}\sum_{i\neq j}z_iz_j.
\]
In \cite[Section 6]{GrKaDmWo13}, $\|p_t\|_{\SArm}$ was computed when $t\in\RR, t<0, d>1, \frac{1}{t}<\frac{2-d}{4}$. 
We show that
\begin{equation}\label{eqn: pt SA}
\|p_t\|_{\SArm}= d\max\Big\{\Big|1-\frac{t}{2} \Big|, \Big|1+\frac{t(d-1)}{2} \Big|\Big\},\quad\forall t\in\CC.
\end{equation}
and
\begin{equation}\label{eqn: pt dual}
\|p_t\|_\ast= \frac{(d-1)|t-1|+|(d-1)t+1|}{d},\quad\forall t\in\CC.
\end{equation}
If $d$ is even, then $\|p_t\|_\infty = \|p_t\|_{\SArm}$. Indeed, take $z = (1,1,\ldots,1)$
and $z= (1, -1,1,-1,\ldots,-1)$. As is shown by the Kaijser-Varopoulos example and the Holbrook example, it can happen that $\|p_t\|_\infty<\|p_t\|_{\SArm}$.
\begin{proof}
Write $J$ to be the $d\times d$ matrix with all diagonal terms equal to $0$ and all other terms equal to $1$. Then $J$ has eigenvalues $\lambda_1=-1$, $\lambda_2=d-1$, with corresponding eigenspaces
\[
E_1=\left\{x\in\CC^d~:~\sum_ix_i=0\right\},\quad E_2=\CC\begin{bmatrix}
    1\\1\\\vdots\\1
\end{bmatrix}.
\]
For any commuting tuple $\Tbf$ of contractions,
\[
p_t(\Tbf)=\begin{bmatrix}
    T_1&T_2&\cdots&T_d
\end{bmatrix}
\left((I_d+\frac{t}{2}J)\otimes I\right)
\begin{bmatrix}
    T_1\\T_2\\\vdots\\T_d
\end{bmatrix}.
\]
The column operator and the row operator on the two sides have norm $\leq\sqrt{d}$. Therefore $\|p(\Tbf)\|\leq d\|I_d+\frac{t}{2}J\|=d\max\{|1-\frac{t}{2}|, |1+\frac{t(d-1)}{2}|\}$. Taking the supremum over $\Tbf$ gives
\begin{equation}\label{eqn: pt SA upper}
    \|p_t\|_{\SArm}\leq d\max\Big\{\Big|1-\frac{t}{2} \Big|, \Big|1+\frac{t(d-1)}{2} \Big|\Big\}.
\end{equation}
By the duality in Theorem \ref{thm:dual_norm_general},
\begin{align*}
\|p_t\|_\ast\geq&\sup_{s\in\CC}\frac{|\la p_t, p_s\ra_2|}{\|p_s\|_{\SArm}}\geq\sup_{s\in\CC}\frac{|1+\frac{(d-1)}{2}t\bar{s}|}{\max\{|1-\frac{s}{2}|, |1+\frac{s(d-1)}{2}|\}}\nonumber\\
=&\sup_{s\in\CC}\frac{\left|\la(1-\frac{s}{2}, 1+\frac{s(d-1)}{2}), (\frac{(d-1)(1-t)}{d}, \frac{1+(d-1)t}{d})\ra_2\right|}{\|(1-\frac{s}{2}, 1+\frac{s(d-1)}{2})\|_\infty}.
\end{align*}
Note that if $\alpha \in \mathbb{C}$ with $|\alpha| = 1$, then the equation
$1 - \frac{s}{2} = \alpha ( 1 + \frac{s (d-1)}{2})$ has a solution provided that $\alpha(d-1) \neq - 1$.
It follows that the last supremum is the $1$-norm of the second vector in the inner product, so
\begin{equation}
  \label{eqn: pt dual lower}
  \|p_t\|_\ast \ge 
\left\|\left(\frac{(d-1)(1-t)}{d}, \frac{1+(d-1)t}{d}\right)\right\|_1
=\frac{(d-1)|1-t|+|1+(d-1)t|}{d}.
\end{equation}

On the other hand, let $x>0, y\in\RR$ be determined later. Let
\[
L=p_t\otimes p_t+L_1+x,\quad\text{where }L_1=xI_d+y J\in\BBB(P_1).
\]
Here we take the basis $\{z_i\}_{i=1}^d$ of $P_1$ and identify $L_1$ with its matrix expression. Since $M_{z_i}^* J M_{z_i} = 0$, the condition $L\in\LLL_c^{(2)}(p_t)$ is equivalent to:
\begin{equation*}
L_1\geq v_i v_i^\ast, \quad i=1,\cdots,d,
\end{equation*}
where $v_i$ is the vector representation of $M_{z_i}^\ast p_t$ in the basis $\{z_i\}$. By symmetry, this is equivalent to the single condition
\begin{equation}\label{eqn: temp KVH}
L_1\geq v_1 v_1^\ast.
\end{equation}
Note that $L_1$ has eigenvalues $\mu_1:= x- y$ and $\mu_2 := x + (d-1) y$.
We may temporarily increase $x$ a little and assume $L_1$ is invertible, i.e.\ $\mu_1,\mu_2 > 0$.
Then \eqref{eqn: temp KVH} is equivalent to
\begin{equation}\label{eqn: temp KVH 3}
  \langle L_1^{-1} v_1,v_1 \rangle \le 1 \quad\Leftrightarrow\quad
  \mu_2 |t-1|^2(d-1)+ \mu_1 \left|(d-1)t+1\right|^2 \le d \mu_1 \mu_2.
\end{equation}
It follows that without the invertibility assumption on $L_1$, \eqref{eqn: temp KVH} is equivalent to the second condition
in \eqref{eqn: temp KVH 3} and $\mu_1,\mu_2 \ge 0$.
Minimizing $x=\frac{(d-1)\mu_1+\mu_2}{d}$ under these restrictions gives
\[
x=\frac{\left((d-1)|t-1|+|(d-1)t+1|\right)^2}{d^2},\quad y=\frac{\left(|(d-1)t+1|-|t-1|\right)\left(|(d-1)t+1|+(d-1)|t-1|\right)}{d^2}.
\]
This shows that
\begin{equation}\label{eqn: pt dual upper}
\|p_t\|_\ast\leq \frac{(d-1)|t-1|+|(d-1)t+1|}{d}.
\end{equation}
Combining \eqref{eqn: pt dual lower} and \eqref{eqn: pt dual upper} gives \eqref{eqn: pt dual}. By duality,
we also have
\begin{equation*}
  \|p_t\|_{\SArm} \ge \sup_{s \in \mathbb{C}} \frac{| \langle p_t,p_s \rangle_2}{\|p_s\|_*}
  = \sup_{s \in \mathbb{C}} d^2 \frac{| 1 + \frac{(d-1)}{2} t \overline{s}|}{(d-1) |1-s| + | 1 +(d-1) s|}
 \ge d\max\Big\{\Big|1-\frac{t}{2} \Big|, \Big|1+\frac{t(d-1)}{2} \Big|\Big\},
\end{equation*}
which can be seen by choosing $s=1$ and $s = -\frac{1}{d-1}$.
In combination with \eqref{eqn: pt SA upper}, this gives \eqref{eqn: pt SA}.
\end{proof}
\end{exam}

\section{The Weak Product Norm}\label{sec: weak product norm}

In this section, we will study the weak product norm appearing on the right-hand side of \eqref{eqn: SA lower bound} in more detail. Our goal is to prove the following result.

\begin{thm}
\label{thm:three_norm_two_norm}
    Let  $d \in \mathbb N$. There exist constants $C_d \le \binom{2d - 2}{d-1}$
    such that for all $p \in P_{d,n}$, we have
  \begin{equation*}
    \|p\|_2 \le \vertiii{p}_1 \le \sqrt{C_d} \|p\|_2.
  \end{equation*}
  Moreover,
  \begin{equation*}
    \vertiii{p}_1\leq\vertiii{p}_2\leq \sqrt{d}\vertiii{p}_1.
  \end{equation*}
\end{thm}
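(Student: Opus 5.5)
The plan is to prove every inequality on the dual side, through Lemma \ref{lem:Hankel_WP}, by comparing Hankel-type norms of a dual polynomial $q=\sum_{|\gamma|=n}c_\gamma z^\gamma\in P_n$. The one elementary fact used throughout is
\[
\Gamma_q z^\alpha=(M_{z^\alpha})^\ast q=\sum_{\gamma\ge\alpha}c_\gamma z^{\gamma-\alpha},
\qquad\text{so}\qquad
\|\Gamma_q z^\alpha\|_2^2=\sum_{\gamma\ge\alpha}|c_\gamma|^2 .
\]

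\textbf{First chain.} For the lower bound $\|p\|_2\le\vertiii{p}_1$ I use the $k=0$ term. $P_0$ consists of constants, so every weak factorization is $p=\sum_j c_jg_j$, and the triangle inequality gives $\|p\|_2\le\sum_j|c_j|\|g_j\|_2$. The trivial factorization $p=1\cdot p$ attains this, so $\|p\|_{P_0\odot P_n}=\|p\|_2$.

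For the upper bound, the symmetry $\|p\|_{P_k\odot P_{n-k}}=\|p\|_{P_{n-k}\odot P_k}$ lets me assume $k\le n/2$. By Lemma \ref{lem:Hankel_WP}(1) together with Cauchy--Schwarz $|\la p,q\ra_2|\le\|p\|_2\|q\|_2$, it suffices to show $\|q\|_{\Han_k}\ge\|q\|_2/\sqrt{C_d}$ for every $q\in P_n$. Testing on monomials gives
\[
\|q\|_{\Han_k}^2\ge\max_{|\alpha|=k}\sum_{\gamma\ge\alpha}|c_\gamma|^2 .
\]
So the whole first chain reduces to a combinatorial covering statement. One needs a set $S$ of multi-indices with $|\alpha|=k$ and $|S|\le\binom{2d-2}{d-1}$ such that every $\gamma$ with $|\gamma|=n\ge 2k$ dominates some $\alpha\in S$. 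Pigeonhole then gives $\sum_{\gamma\ge\alpha}|c_\gamma|^2\ge\|q\|_2^2/|S|$ for some $\alpha\in S$.

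This covering lemma is the step I expect to be the main obstacle. In words, it says the discrete simplex of size $n$ is covered by $\binom{2d-2}{d-1}$ translates of the simplex of size $n-k\ge n/2$. I would first try $\alpha$'s built from a lattice of ``rounded'' points. Concretely, write $k$ as a sum of $d$ parts that are multiples of $\lceil k/(d-1)\rceil$ or nearby, and argue that any $\gamma$ has enough mass to dominate one of them. If that fails, I would induct on $d$: split according to the largest coordinate of $\gamma$, which is at least $n/d$, and use that count to produce the binomial $\binom{2d-2}{d-1}$. If the sharp constant resists, I would settle for some $C_d$ and revisit.

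\textbf{Second chain.} For $\vertiii{p}_1\le\vertiii{p}_2$, inequality \eqref{eqn:WP_easy_estimate} with $l=n-k$ controls each $\|p\|_{P_k\odot P_{n-k}}$ with $n-k\ge1$ by $\|p\|_{Z\odot P_k\odot P_{n-k-1}}$. The remaining index $k=n$ is handled by symmetry, since $\|p\|_{P_n\odot P_0}=\|p\|_{P_0\odot P_n}$, which is the $k=0$ case.

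For $\vertiii{p}_2\le\sqrt d\,\vertiii{p}_1$, I dualize via Lemma \ref{lem:Hankel_WP}(1),(2). It then suffices to show
\[
\|q\|_{\Han_{k+1}}\le\sqrt d\,\|q\|'_{\Han_k},
\]
which gives $\|p\|_{Z\odot P_k\odot P_{n-k-1}}\le\sqrt d\,\|p\|_{P_{k+1}\odot P_{n-k-1}}$. To prove it, take $f\in P_{k+1}$ and split $f=\sum_i z_if_i$ by assigning each monomial of $f$ to a single variable it contains. This makes the $z_if_i$ mutually orthogonal, so $\sum_i\|f_i\|_2^2=\|f\|_2^2$. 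Then
\[
\|\Gamma_q f\|_2\le\sum_i\|\Gamma_qM_{z_i}f_i\|_2\le\|q\|'_{\Han_k}\sum_i\|f_i\|_2\le\sqrt d\,\|q\|'_{\Han_k}\|f\|_2 .
\]
Taking the maximum over $k$ finishes the proof.
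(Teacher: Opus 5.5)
You have a genuine gap: the covering lemma is never proved. Everything around it is correct. The $k=0$ lower bound holds. Reducing the upper bound, via Lemma \ref{lem:Hankel_WP}(1), to $\|q\|_{\Han_k}\ge\|q\|_2/\sqrt{C_d}$ by testing $\Gamma_q$ on the unit vectors $z^\alpha$ and pigeonholing is valid. The second chain is complete: your inequality $\|q\|_{\Han_{k+1}}\le\sqrt d\,\|q\|'_{\Han_k}$ is the dual form of the paper's splitting $g_j=\sum_i z_i g_{i,j}$, just applied to the other factor.

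Your first-chain route differs from the paper's. The paper decomposes $p=\sum_{\beta\in G} z^\beta q_\beta$ orthogonally with $|\beta|\ge n/2$. It then uses Lemma \ref{lem:factor_small_norm} together with the triangle inequality for $\vertiii{\cdot}_1$. Your route is the dual of this, done one $k$ at a time. Both rest on the same combinatorial covering statement.

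That covering statement is the only source of any constant, sharp or not. As written, your proposal therefore gives no finite bound on $\vertiii{p}_1/\|p\|_2$ at all. The ``rounded lattice'' and ``induct on the largest coordinate'' ideas are only tentative. Your claim does follow from Lemma \ref{lem:combinatorics_multi}: trim each $\beta\in G$ to some $\alpha\le\beta$ with $|\alpha|=k$. The paper gets that lemma by covering the simplex with $\binom{2d-2}{d-1}$ translates of its half, citing \cite{XLZ21}.

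Your first idea can be completed with exactly the stated constant; the case $d=1$ is trivial, so take $d\ge2$.
\begin{itemize}
\item[(i)] Index $S$ by the $\binom{2d-2}{d-1}$ multi-indices $m$ with $|m|=d-1$. Set $\beta(m)_j=\lceil n m_j/(2d-2)\rceil$, so $|\beta(m)|\ge n/2\ge k$. Fix some $\alpha(m)\le\beta(m)$ with $|\alpha(m)|=k$.
\item[(ii)] Given $\gamma$ with $|\gamma|=n$, put $u_j=(2d-2)\gamma_j/n$. Since $\lfloor u_j\rfloor>u_j-1$, we get $\sum_j\lfloor u_j\rfloor>(2d-2)-d=d-2$. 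Hence $\sum_j\lfloor u_j\rfloor\ge d-1$, so there is an $m$ with $m_j\le\lfloor u_j\rfloor$ for all $j$ and $|m|=d-1$.
\item[(iii)] Then $n m_j/(2d-2)\le\gamma_j$ for each $j$. Since $\gamma_j$ is an integer, $\alpha(m)\le\beta(m)\le\gamma$.
\end{itemize}
With this lemma supplied, your argument is complete.
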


Thus, both norms are equivalent to the Hardy norm.
In particular, Theorem \ref{thm: norm 1 and 2} alone cannot be used to show
that von Neumann's inequality does not hold up to a constant.

We start by proving the second inequality.
\begin{lem}\label{lem: two norms equivalent}
For $p\in P_{d,n}$, let $\vertiii{p}_1$ and $\vertiii{p}_2$ be defined as in Definition \ref{defn: weak product and two norms}. Then
\begin{equation*}
\vertiii{p}_1\leq\vertiii{p}_2\leq \sqrt{d}\vertiii{p}_1.
\end{equation*}
\end{lem}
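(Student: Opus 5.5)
The first inequality $\vertiii{p}_1\le\vertiii{p}_2$ is essentially contained in the proof of Theorem \ref{thm: norm 1 and 2}. By \eqref{eqn:WP_easy_estimate}, for $1\le l\le n$ we have $\|p\|_{P_k\odot P_l}\le \|p\|_{Z\odot P_k\odot P_{l-1}}$, where $k=n-l$. This handles every index $k\le n-1$ in the maximum defining $\vertiii{p}_1$. For $k=n$, the symmetry $\|p\|_{P_n\odot P_0}=\|p\|_{P_0\odot P_n}$ brings us back to the case $k=0$. Taking maxima gives $\vertiii{p}_1\le\vertiii{p}_2$.

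For the second inequality, I would work on the dual side using Lemma \ref{lem:Hankel_WP}. The norm $\|\cdot\|_{Z\odot P_k\odot P_{n-k-1}}$ is dual to $\|q\|'_{\Han_k}=\max_i\|\Gamma_q M_{z_i}|_{P_k}\|$. The norm $\|\cdot\|_{P_{k+1}\odot P_{n-k-1}}$ is dual to $\|q\|_{\Han_{k+1}}=\|\Gamma_q|_{P_{k+1}}\|$. So it suffices to show, for each $0\le k\le n-1$ and all $q\in P_n$,
\[
\|q\|_{\Han_{k+1}}\le \sqrt d\,\|q\|'_{\Han_k}.
\]
Dualizing this gives $\|p\|_{Z\odot P_k\odot P_{n-k-1}}\le\sqrt d\,\|p\|_{P_{k+1}\odot P_{n-k-1}}\le\sqrt d\,\vertiii{p}_1$. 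Taking the maximum over $k$ then finishes the proof.

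To prove the Hankel estimate, I will use that the row operator $R=[M_{z_1}\ \cdots\ M_{z_d}]$ from $P_k^d$ onto $P_{k+1}$ is surjective. Its adjoint $R^*=(M_{z_1}^*,\dots,M_{z_d}^*)^T$ satisfies
\[
RR^*=\sum_i M_{z_i}M_{z_i}^*\ge P_{\ge1}.
\]
This holds because on monomials $z^\alpha$ with $\alpha\neq0$, the sum $\sum_i M_{z_i}M_{z_i}^*$ acts as multiplication by $\#\{i:\alpha_i\ge1\}\ge1$. Hence every $h\in P_{k+1}$ can be written as $h=\sum_i z_i h_i$ with $h_i\in P_k$ and $\sum_i\|h_i\|^2\le\|h\|^2$; take $(h_i)=R^*(RR^*)^{-1}h$. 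Then
\[
\|\Gamma_q h\|\le\sum_i\|\Gamma_q M_{z_i}h_i\|\le \|q\|'_{\Han_k}\sum_i\|h_i\|\le\sqrt d\,\|q\|'_{\Han_k}\|h\|,
\]
by Cauchy--Schwarz. The only point requiring care is this bound $\sum_i\|h_i\|^2\le\|h\|^2$, i.e.\ the operator inequality $RR^*\ge I$ on $P_{k+1}$. Since $RR^*$ is diagonal in the monomial basis with entries at least $1$, this is routine.
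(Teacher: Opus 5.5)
Your proof is correct. The first inequality is handled exactly as in the paper, via \eqref{eqn:WP_easy_estimate} together with the symmetry $\|p\|_{P_n\odot P_0}=\|p\|_{P_0\odot P_n}$.

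For the second inequality you take the dual route. You prove the Hankel estimate $\|\Gamma_q\big|_{P_{k+1}}\|\le\sqrt d\,\max_i\|\Gamma_qM_{z_i}\big|_{P_k}\|$ by splitting the input $h\in P_{k+1}$ as $h=\sum_i z_ih_i$ with $\sum_i\|h_i\|_2^2\le\|h\|_2^2$. You then transfer this through Lemma \ref{lem:Hankel_WP} to obtain $\|p\|_{Z\odot P_k\odot P_{n-k-1}}\le\sqrt d\,\|p\|_{P_{k+1}\odot P_{n-k-1}}$.

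The paper instead works directly on the primal side. Starting from a weak factorization $p=\sum_j f_jg_j$ with $f_j\in P_k$ and $g_j\in P_{n-k}$, it splits each second factor orthogonally as $g_j=\sum_i z_ig_{i,j}$. It then regroups into $p=\sum_i z_ih_i$ and applies Cauchy--Schwarz, which gives $\|p\|_{Z\odot P_k\odot P_{n-k-1}}\le\sqrt d\,\|p\|_{P_k\odot P_{n-k}}$. So the paper absorbs $z_i$ into the second factor where you absorb it into the first. This difference is immaterial, since $\vertiii{\cdot}_1$ maximizes over all splittings.

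The underlying mechanism is the same in both arguments: a contractive splitting $g=\sum_i z_ig_i$ followed by Cauchy--Schwarz. The paper's version is shorter and self-contained. It needs neither the duality lemma nor the operator $R^*(RR^*)^{-1}$; simply assigning each monomial to one variable dividing it already gives an exact orthogonal splitting with $\sum_i\|h_i\|_2^2=\|h\|_2^2$. Your version has the side benefit of making explicit, at the level of Hankel norms, how the quantities $\|\Gamma_q\big|_{P_k}\|$ and $\max_i\|M_{z_i}^*\Gamma_q\big|_{P_k}\|$ behind Methods 1 and 2 compare.
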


\begin{proof}
  The first inequality was already observed in Theorem \ref{thm: norm 1 and 2}.
  For any $p\in P_{d,n}$ and any $0\leq k\leq n-1$, suppose $p=\sum_{j=1}^m f_jg_j$, where $f_j\in P_{d,k}, g_j\in P_{d,n-k}$. For each $j$, decompose $g_j$ into an orthogonal sum $g_j=\sum_{i=1}^d z_ig_{i,j}$, so $\|g_j\|^2 = \sum_{i} \|g_{ij}\|^2_2$. Write $h_i=\sum_{j=1}^mf_jg_{i,j}$. Then $p=\sum_{i=1}^d z_ih_i$, and
\[
\|h_i\|_{P_{d,k}\odot P_{d,n-k-1}}\leq\sum_{j=1}^m\|f_j\|_2\|g_{i,j}\|_2.
\]
Therefore
\[
\|p\|_{Z_d\odot P_{d,k}\odot P_{d,n-k-1}}\leq\sum_i\|h_i\|_{P_{d,k}\odot P_{d,n-k-1}}\leq \sum_{j=1}^m\|f_j\|_2\sum_{i=1}^d\|g_{i,j}\|_2\leq\sqrt{d}\sum_{j=1}^m\|f_j\|_2\|g_j\|_2.
\]
Taking the infimum over all decompositions of $p$ gives $\|p\|_{Z_d\odot P_{d,k}\odot P_{d,n-k-1}}\leq \sqrt{d}\|p\|_{P_{d,k}\odot P_{d,n-k}}$. This completes the proof.
\end{proof}

To prove the first inequality in Theorem \ref{thm:three_norm_two_norm}, we begin with a simple observation.

\begin{lem}
  \label{lem:factor_small_norm}
  If $\alpha$ is a multi-index, $q$ is a homogeneous polynomial and $|\alpha| \ge \deg(q)$, then
  \begin{equation*}
    \vertiii{z^\alpha q}_1 = \|z^\alpha q\|_2 = \|q\|_2.
  \end{equation*}
\end{lem}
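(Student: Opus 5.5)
We have to prove $\vertiii{z^\alpha q}_1 = \|q\|_2$ when $|\alpha| \ge \deg q$. The equality $\|z^\alpha q\|_2 = \|q\|_2$ is immediate, since multiplication by a monomial is an isometry on $H^2(\DD^d)$. The plan is to show two inequalities: first $\|p\|_{P_k\odot P_{n-k}} \ge \|p\|_2$ for every $k$ and every $p \in P_n$; then, for $p = z^\alpha q$, $\|p\|_{P_k \odot P_{n-k}} \le \|q\|_2$ for every $0\le k\le n$, where $n = |\alpha| + m$ and $m = \deg q$.

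For the lower bound we use the duality of Lemma \ref{lem:Hankel_WP}(1):
\[
\|p\|_{P_k\odot P_{n-k}} \ge \frac{|\langle p,p\rangle_2|}{\|p\|_{\Han_k}}.
\]
So it suffices to show $\|\Gamma_p|_{P_k}\| \le \|p\|_2$. Alternatively, and more directly, we can use the fact that the Hardy norm of a product of homogeneous polynomials is at most the product of their norms:
\[
\|fg\|_2 \le \|f\|_2\|g\|_2 \quad \text{for } f\in P_k,\ g\in P_{n-k}.
\]
This holds because on the torus the Hardy norm is the $L^2(\TT^d)$ norm, and $\|fg\|_{L^2}$ can be compared by writing $fg$ as a bilinear image of $f\otimes g$ under the contraction $z^\beta\otimes z^\gamma \mapsto z^{\beta+\gamma}$. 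More precisely, the coefficient of $z^\delta$ in $fg$ is $\sum_{\beta+\gamma=\delta} a_\beta b_\gamma$, and by Cauchy--Schwarz
\[
|c_\delta|^2 \le \Big(\sum_{\beta+\gamma=\delta}|a_\beta|^2\Big)\Big(\sum_{\beta+\gamma=\delta}|b_\gamma|^2\Big).
\]
This crude bound does not sum correctly, so I would rather use the cleaner estimate via the Hankel operator. One has $\|\Gamma_p \widehat f\|_2^2 = \sum_{\gamma} |\langle p, f z^\gamma\rangle|^2$, and the map $g\mapsto fg$ from $P_{n-k}$ to $P_n$ has norm at most $\|f\|_2$. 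I need to check that claim; if it fails I would fall back on the general fact $\vertiii{p}_1 \ge \|p\|_{P_0\odot P_n} = \|p\|_2$. That fallback in fact settles the lower bound immediately, since for $k=0$ the weak product is just $\|p\|_2$: any decomposition $p=\sum c_j g_j$ with scalars $c_j$ gives $\sum |c_j|\|g_j\|_2 \ge \|p\|_2$. Since $\vertiii{\cdot}_1$ is a maximum over $k$, the lower bound $\vertiii{p}_1 \ge \|p\|_2$ is trivial. I will use this route.

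For the upper bound, fix $k$ and split into two cases according to where $k$ falls. By the symmetry $\|p\|_{P_k\odot P_{n-k}} = \|p\|_{P_{n-k}\odot P_k}$ we may assume $k \le n-k$, so $k \le n/2$. Since $|\alpha| \ge m$ gives $n = |\alpha|+m \ge 2m$, we get $n - k \ge n/2 \ge m$. Also $k \le n/2 \le |\alpha|$, because $|\alpha| \ge n/2$. So we can choose a multi-index $\beta \le \alpha$ with $|\beta| = k$, and write
\[
p = z^\beta \cdot \big(z^{\alpha-\beta} q\big),
\]
where $z^\beta \in P_k$ and $z^{\alpha-\beta}q \in P_{n-k}$. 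This single-term factorization gives
\[
\|p\|_{P_k\odot P_{n-k}} \le \|z^\beta\|_2\, \|z^{\alpha-\beta}q\|_2 = \|q\|_2.
\]
Taking the maximum over $k$ yields $\vertiii{p}_1 \le \|q\|_2$. The only delicate point is the index bookkeeping that guarantees the monomial factor of degree $\min(k,n-k)$ can be peeled off from $z^\alpha$; the hypothesis $|\alpha|\ge\deg q$ is exactly what makes $\min(k,n-k)\le |\alpha|$ hold.
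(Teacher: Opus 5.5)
Your final argument is correct and is essentially the paper's proof. The lower bound comes from the $k=0$ term, where $\|p\|_{P_0\odot P_n}=\|p\|_2$. The upper bound comes from reducing to $k\le n/2\le|\alpha|$ by symmetry and using the single factorization $z^\beta\cdot(z^{\alpha-\beta}q)$ with $\beta\le\alpha$ and $|\beta|=k$.

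Do delete the abandoned detour from your write-up. The claims $\|fg\|_2\le\|f\|_2\|g\|_2$ and $\|p\|_{P_k\odot P_{n-k}}\ge\|p\|_2$ for every $k$ are both false. Take $f=g=z_1+z_2$: then $\|fg\|_2=\sqrt6>2$, and $\|(z_1+z_2)^2\|_{P_1\odot P_1}\le 2<\sqrt6$. This is exactly why only the $k=0$ term can be used for the lower bound.
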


\begin{proof}
  Choosing $k = 0$ in the definition of $\vertiii{\cdot}_1$ shows that $\vertiii{z^\alpha q}_1 \ge \|z^\alpha q\|_2$, and
  it is clear that $\|z^\alpha q \|_2 = \|q\|_2$. Conversely, let $n = |\alpha| + \deg(q)$ be the degree
  of $z^\alpha q$, and let $k \in \{0,1,\ldots,n \}$. We wish to estimate $\|z^\alpha q\|_{P_{n-k} \odot P_k}$.
  By replacing $k$ with $n-k$ if necessary, we may without loss of generality assume that $k \le \frac{n}{2}$.
  Thus, $|\alpha| \ge k$.  Write $\alpha = \beta + \gamma$ for multi-indices $\beta,\gamma$ with $|\beta| = k$.
  Then
  \begin{equation*}
    \|z^\alpha q\|_{P_{d,n-k}\odot P_{d,k}}\le \|z^\beta\|_2 \|z^\gamma q\|_2 = \|q\|_2.
  \end{equation*}
  Taking the maximum over all $k\in\{0,\cdots,n\}$ gives $\vertiii{z^\alpha q}_1\leq\|q\|_2$,
  which proves the remaining inequality.
\end{proof}

If $t = (t_1,\ldots,t_d), s= (s_1,\ldots,s_d) \in [0,1]^d$, we write
$t \le s$ to mean that $t_j \le s_j$  for all $j = 1,\ldots,d$.
We require the following combinatorial lemma.

\begin{lem}
  \label{lem:combinatorics_real}
  Let $d \in \mathbb{N}$, let
  \begin{equation*}
    A_d = \Big\{ (t_1,\ldots,t_d) \in [0,1]^d: \sum_{j} t_j = 1\Big\}
  \end{equation*}
  and let
  \begin{equation*}
    B_d = \Big\{  (s_1,\ldots,s_d) \in [0,1]^d: \sum_{j} s_j = \frac{1}{2}\Big\}.
  \end{equation*}
  There exists a finite subset $F_d \subset B_d$ such that for each $t \in A_d$,
  there exists $s \in F_d$ with $s \le t$. We can achieve $|F_d| \le \binom{2 d-2}{d-1}$.
\end{lem}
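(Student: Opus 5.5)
Rather than compactness, I will build $F_d$ explicitly from a grid of mesh $\frac{1}{2(d-1)}$. Set $m = d-1$ and let $F_d$ be the set of vectors
\[
s = \Big( \frac{a_1}{2m}, \ldots, \frac{a_d}{2m} \Big), \quad a_j \in \mathbb{N}_0, \quad \sum_j a_j = m .
\]
Each such $s$ has $\sum_j s_j = \frac{m}{2m} = \frac12$ and entries in $[0,1]$, so $F_d \subset B_d$. Counting compositions of $m$ into $d$ nonnegative parts gives
\[
|F_d| = \binom{m+d-1}{d-1} = \binom{2d-2}{d-1},
\]
which is the claimed bound. (If $d=1$, take $F_1 = \{1/2\}$.)

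Now fix $t \in A_d$. For the domination I put $b_j = \lfloor 2m t_j \rfloor$, so that $\frac{b_j}{2m} \le t_j$. Since $2m t_j - b_j < 1$,
\[
\sum_j b_j > 2m \sum_j t_j - d = 2m - d = m - 1 .
\]
As the $b_j$ are integers, this gives $\sum_j b_j \ge m$. I can therefore choose integers $0 \le a_j \le b_j$ with $\sum_j a_j = m$, for instance by decreasing the $b_j$ one unit at a time. The vector $s = (a_j/(2m))_j$ then lies in $F_d$ and satisfies $s \le t$.

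The only real difficulty is picking the grid size so that two things hold at once. Rounding down loses strictly less than $d$ units in total, and the surplus $2m - m = m$ has to absorb that loss. The choice $m = d-1$ is exactly the one that makes the strict inequality $\sum_j b_j > m-1$ yield $\sum_j b_j \ge m$, and it also produces the binomial count $\binom{2d-2}{d-1}$. With that choice fixed, everything else is routine.
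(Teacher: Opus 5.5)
Your proof is correct, and it takes a different route from the paper.

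\textbf{The paper's route.} The paper first observes that for $s\in B_d$ one has $\{t\in A_d: t\ge s\}=s+B_d$. So a set $F_d$ works exactly when the translates $s+B_d$, $s\in F_d$, cover $A_d$. After an affine change of variables, this becomes the problem of covering the simplex $K=\{x\in[0,\infty)^{d-1}:\sum_j x_j\le d-1\}$ by translates of $\frac12 K$. The bound $\binom{2d-2}{d-1}$ is then imported from \cite[Proposition 2.1]{XLZ21}.

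\textbf{Your route.} You write down the explicit grid $\{a/(2(d-1)) : a\in\mathbb{N}_0^d,\ \sum_j a_j=d-1\}$ and prove domination directly by rounding down. The total rounding loss is strictly less than $d$, so $\sum_j b_j>2m-d$. This forces $\sum_j b_j\ge m$ as soon as $m\ge d-1$, and $m=d-1$ gives exactly the stated count.

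\textbf{Comparison.} Your argument is self-contained and explicit. It also makes the set $G$ in Lemma \ref{lem:combinatorics_multi} completely concrete. In effect, it reproves the special case of the cited covering result that is needed here.

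The paper's reformulation buys something different: the equivalence between admissible sets $F_d$ and coverings of $A_d$ by translates of $\frac12 A_d$. That equivalence is what yields the volume lower bound $|F_d|\ge 2^{d-1}$ in Remark \ref{rem:size}. It also places the optimal constant within the literature on covering convex bodies by homothetic copies, so any improvement there would transfer directly.
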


\begin{proof}
If $s \in B_d$, then
\[
    \{t \in A_d: t \ge s \} = s + B_d.
\]
Thus, a subset $F_d \subset B_d$ has the desired property if and only if
\[ 
    A_d \subset \bigcup_{s \in F_d} s + B_d.
\]
On the other hand, if $G_d \subset \mathbb R^d$ is any subset such that
\[
    A_d \subset \bigcup_{s \in G_d} s + B_d,
\]
then we also have
\[ 
    A_d \subset \bigcup_{s \in G_d \cap B_d} s + B_d.
\]
Therefore, it suffices to show that the simplex $A_d$ can be covered by at most $\binom{2 d - 2}{d-1}$ translates of the simplex $B_d = \frac{1}{2} A_d$. By applying an affine transformation, this is the same problem as covering the simplex
\[
    K = \{x \in \mathbb [0,\infty)^{d-1}: \sum_{j} x_j \le d-1 \}
    \]
    by translates of $\frac{1}{2} K$. A more general version of this problem was solved in \cite[Proposition 2.1]{XLZ21}, which in particular shows that $\binom{2d-2}{d-1}$ translates suffice.
\end{proof}

\begin{rem}
  \label{rem:size}
  By Stirling's formula, $\binom{2 d - 2}{d-1}$ grows like a constant times $d^{-1/2} 4^d$.
  On the other hand, comparing $d-1$-dimensional volumes of the simplices $A_d$ and $B_d = \frac{1}{2} A_d$ gives the lower bound $|F_d| \ge 2^{d-1}$.
  In particular, $|F_d|$ necessarily grows exponentially in $d$.
\end{rem}

We will write
\begin{equation*}
  X_{d,n} = \{\alpha \in \mathbb{N}_0^d: |\alpha| = n \}
\end{equation*}
for the set of multi-indices in $d$-variables of length $n$. If $r \in [0,\infty)$, we also define
\begin{equation*}
  X_{d, \ge r} = \{\alpha \in \mathbb{N}_0^d: |\alpha| \ge r \}.
\end{equation*}
The following is a discrete analogue of Lemma \ref{lem:combinatorics_real}.

\begin{lem}
  \label{lem:combinatorics_multi}
  Let $d \in \mathbb{N}$. There exists a constant $C_d \le \binom{2 d - 2}{d-1}$ such that for all
  $n \in \mathbb{N}_0$, there exists a subset $G \subset X_{d,\ge \frac{n}{2}}$ with $|G| \le C_d$
  such that for each $\alpha \in X_{d,n}$, there exists $\beta \in G$ with $\beta \le \alpha$.
\end{lem}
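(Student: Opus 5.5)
We deduce the statement from Lemma \ref{lem:combinatorics_real} by scaling and rounding. Let $F_d \subset B_d$ be the finite set provided there, with $|F_d| \le \binom{2d-2}{d-1}$, and set $C_d = |F_d|$. Fix $n \in \mathbb{N}_0$; the case $n=0$ is trivial (take $G=\{0\}$), so assume $n \ge 1$. For each $s \in F_d$ we define a multi-index $\beta(s)$ by rounding up coordinatewise:
\begin{equation*}
  \beta(s)_j = \lceil n s_j \rceil, \quad j=1,\ldots,d,
\end{equation*}
and we let $G = \{\beta(s): s \in F_d\}$. Then $|G| \le |F_d| \le C_d$. Moreover $|\beta(s)| = \sum_j \lceil n s_j \rceil \ge n \sum_j s_j = \frac{n}{2}$, so $G \subset X_{d,\ge \frac{n}{2}}$.

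Next we check the covering property. Let $\alpha \in X_{d,n}$ and put $t = \alpha/n$. Since $\sum_j t_j = 1$ and $t_j \in [0,1]$, we have $t \in A_d$. By Lemma \ref{lem:combinatorics_real}, there is $s \in F_d$ with $s \le t$, that is, $n s_j \le \alpha_j$ for each $j$. Because $\alpha_j$ is an integer, this gives $\lceil n s_j \rceil \le \alpha_j$, so $\beta(s) \le \alpha$.

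The rounding step is where the argument could fail, since we must keep $|\beta| \ge n/2$ without violating $\beta \le \alpha$. Rounding up is consistent with both requirements precisely because $\alpha$ is integral. All the geometric difficulty is already contained in Lemma \ref{lem:combinatorics_real}, so no further obstacle is expected. Note also that the resulting constant $C_d$ is independent of $n$, as the statement requires.
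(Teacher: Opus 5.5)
Your proposal is correct and follows the paper's proof essentially verbatim: you take $C_d = |F_d|$, round the scaled points $ns$ up coordinatewise, and use integrality of $\alpha$ to get $\beta \le \alpha$. For the case $n=0$ with $G=\{0\}$, note that $|G| = 1 \le C_d$ holds because $F_d$ must be nonempty, since $A_d \neq \emptyset$.
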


\begin{proof}
  The statement is trivial if $n=0$, so let $n \ge 1$.
  Let $A_d,B_d$ be the sets appearing in Lemma \ref{lem:combinatorics_real} and let $F_d \subset B_d$
  the finite set provided by the same lemma. We claim that we can take $C_d = |F_d|$.
  To see this, let $n \in \mathbb{N}_0$ and define
  \begin{equation*}
    G = \{ ( \lceil n s_1 \rceil, \ldots, \lceil n s_d \rceil): (s_1,\ldots,s_d) \in F_d \}.
  \end{equation*}
  Here $\lceil x\rceil$ denotes the ceiling of $x$.
  It is clear that $G \subset X_{d,\ge \frac{n}{2}}$. If $\alpha \in X_{d,n}$, let $t= \alpha / n$.
  Then $t \in A_d$, so by the defining property of $F_d$, there exists $s \in F_d$ with $s \le t$.
  Let $\beta = ( \lceil n s_1 \rceil, \ldots, \lceil n s_d \rceil) \in G$.
  Then
  \begin{equation*}
    n s \le n t = \alpha,
  \end{equation*}
  and since $\alpha$ has integer components, we conclude that $\beta \le \alpha$, as desired.
\end{proof}

We are now ready for the proof of the first inequality in Theorem \ref{thm:three_norm_two_norm}.
The basic idea to decompose a homogeneous polynomial into band-limited polynomials already
appeared in \cite[Lemma 3.3]{Hartz25}.

\begin{proof}[Proof of Theorem \ref{thm:three_norm_two_norm}]
    Let $p \in P_{d,n}$.
  The lower bound $\vertiii{p}_1 \ge \|p\|_2$  follows by choosing $k=0$ in the definition of $\vertiii{\cdot}_1$. For the upper bound, let $G \subset X_{d, \ge \frac{n}{2}}$ be as in the conclusion
  of Lemma \ref{lem:combinatorics_multi}.
  For each $\alpha \in X_{d,n}$, choose $\beta(\alpha) \in G$ with $\beta(\alpha) \le \alpha$, and let
  $\gamma(\alpha) = \alpha - \beta(\alpha)$.
  Write
  \begin{equation*}
    p(z) = \sum_{\alpha \in X_{d,n}} a_\alpha z^\alpha
    = \sum_{\alpha \in X_{d,n}} a_\alpha z^{\beta(\alpha) + \gamma(\alpha)}
    = \sum_{\beta \in G} z^\beta \sum_{\substack{\alpha \in X_{d,n} \\ \beta(\alpha) = \beta}} a_\alpha z^{\gamma(\alpha)}.
  \end{equation*}
  Let $q_\beta(z)$ denote the inner sum. Then
  \begin{equation*}
    p = \sum_{\beta \in G} z^\beta q_\beta \quad \text{ and } \quad
    \|p\|_2^2 = \sum_{\beta \in G} \|q_\beta\|_2^2.
  \end{equation*}
  Since $|\beta| \ge \frac{n}{2}$, Lemma \ref{lem:factor_small_norm} applies to show that
  \begin{equation*}
    \vertiii{p}_1 \le \sum_{\beta \in G} \vertiii{z^\beta q_\beta}_1 = \sum_{\beta \in G} \|q_\beta\|_2
    \le |G|^{1/2} \Big( \sum_{\beta \in G} \|q_\beta\|_2^2 \Big)^{1/2}
    \le C_d^{1/2} \|p\|_2,
  \end{equation*}
  where we have used the Cauchy--Schwarz inequality in the penultimate step.
\end{proof}

\begin{rem}\label{rem: M3 for reproducing kernel}
  By Theorem \ref{thm: norm 1 and 2}, we have the lower bounds $\|p\|_{\SArm} \ge \vertiii{p}_2 \ge \vertiii{p}_1$
  for $p \in P_n$. Whereas these lower bounds can exceed $\|p\|_\infty$, Theorem
  \ref{thm:three_norm_two_norm} shows that they will often be much smaller than $\|p\|_\infty$.
  This failure of $\vertiii{\cdot}_1$ and $\vertiii{\cdot}_2$ to capture the supremum norm is closely related
  to the following observation. Let $K_z^{(n)}(w)=\sum_{|\alpha|=n}\bz^\alpha w^\alpha$ be the reproducing kernel of $P_{d,n}$ at $z\in\TT^d$. One can check (cf.\ the discussion following Theorem \ref{thm:dual_norm_general}) that $\|K_z^{(n)}\|_\ast=1$. For $0\leq k\leq n-1$, let $a_k=\|K_z^{(k)}\|_2=\sqrt{k+d-1\choose d-1}$.
  For each $k$, the Hankel operator $\Gamma_{K_z^{(n)}}$ acts as a rank $1$ operator from $P_k$ to $P_{n-k}$. Using this, one checks that
\[
A_k(K_z^{(n)})=a_k a_{n-k},\quad\text{and}\quad B_k(K_z^{(n)})=a_k a_{n-k-1}.
\]
Clearly, each $A_k(K_z^{(n)})$ or $B_k(K_z^{(n)})$ alone is significantly bigger than $1$. Thus Method 1 and Method 2 fail to recover the dual Schur--Agler norm of $K_z^{(n)}$. 

One can also compute
\[
C_k(q)=\frac{a_{n-k-1}}{a_{n-k}},
\]
which implies
\[
C_0(K_z^{(n)})\cdots C_{n-1}(K_z^{(n)})\cdot\|K_z^{(n)}\|_2=\frac{a_0}{a_n}\cdot a_n=1=\|K_z^{n}\|_\ast,
\]
and
\[
C_{n-1}(K_z^{(n)})\cdots C_1(K_z^{(n)}) B_0(K_z^{(n)})=\frac{a_0}{a_{n-1}}\cdot a_0 a_{n-1}=a_0^2=1=\|K_z^{n}\|_\ast.
\]
Thus Methods 3 and 4 recover $\|K_z^{(n)}\|_\ast$. Thus, by duality, its resulting lower bound for $\|\cdot\|_{\SArm}$ is at least as large as the supremum norm. However, in order to know whether they are equivalent, more techniques need to be developed.
\end{rem}

\begin{rem}
    The upper bound for $\frac{\vertiii{p}_1}{\|p\|_\infty}$ in Theorem \ref{thm:three_norm_two_norm} is
    on the order of $2^d d^{-1/4}$. Conversely, the Dixon example (cf. Example \ref{exam: Dixon} and Remark \ref{rem: Dixon exponential growth}) shows
    that
    \[
        \sup \left\{ \frac{\vertiii{p}_2}{\|p\|_\infty}: p \in P_{d,n}, n \in \mathbb N \right\}
    \]
     grows exponentially in $d$, hence so does
    \[
        \sup \left\{ \frac{\vertiii{p}_1}{\|p\|_\infty}: p \in P_{d,n}, n \in \mathbb N \right\}.
    \]
    by Lemma \ref{lem: two norms equivalent}.
\end{rem}

\section{Semidefinite Programming}\label{sec: SDP}

The definition of the Schur--Agler norm of a homogeneous polynomial $p \in P_n$,
\begin{equation*}
  \|p\|_{\SArm} = \sup \{ \|p(\Tbf)\|: \Tbf \text{ tuple of commuting contractions} \}
\end{equation*}
does not appear to be amenable to numerical computations, because of the complicated
nature of the set of $d$-tuples of commuting contractions, see for instance \cite{HO01,Sivic12},
and the non-linearity of the map $\Tbf \mapsto p(\Tbf)$.

In contrast, Lemma \ref{lem: SA by graded} gives the expression
\begin{equation*}
  \|p\|_{\SArm}^2 = \sup \{ \langle L p,p \rangle_2: L \in \mathcal{L}_c^{(n)}, \langle L 1,1 \rangle_2 \le 1 \},
\end{equation*}
where the convex cone $\LLL_c^{(n)}$ is given by
\begin{equation*}
  \mathcal{L}_c^{(n)} = \left\{ L = \sum_{k=0}^n L_k, L_k \in B(P_k), L_k \ge 0, M_{z_i}^* L_k M_{z_i} \le L_{k-1} \text{ for } 1 \le k \le n, 1 \le i \le d \right\}.
\end{equation*}
Thus, $\|p\|_{\SArm}^2$ is expressed as the solution of a convex optimization problem.

In fact, the feasible region is described by affine and semi-definite constraints, and the objective function is linear. Such convex optimization problems are known as semi-definite programs (SDP),
which can often be solved efficiently in practice. Similarly, Theorem \ref{thm:dual_norm_general} (2) yields an SDP for computing the square of the dual Schur--Agler norm of a homogeneous polynomial.

A very basic implementation of this algorithm using the MATLAB toolbox Yalmip \cite{yalmip} with SDP solver MOSEK \cite{mosek} computes
the Schur--Agler norm of the Kaijser--Varopoulos polynomial with approximately $8$ digits
of accuracy; the runtime on a standard laptop computer is less than $0.2$ seconds.
Moreover, the algorithm returns a maximizing operator $L$, which can serve
as a numerical certificate for a lower bound of the Schur--Agler norm of $p$.

We have also tested the algorithm on some low degree homogeneous polynomials in $d=4$ variables
(the first case where it is not known if Schur--Agler norm and supremum norm are comparable).
For instance, in degree $6$, the run time is on the order of a few seconds per polynomial.
However, the runtime increases quickly with the degree, and it is known that the maximum
of the ratio $\frac{\|p\|_{\SArm}}{\|p\|_\infty}$ can at most grow logarithmically in the degree.
Moreover, whereas the computation of $\|p\|_{\SArm}$ is a convex optimization problem for a fixed polynomial, maximizing the ratio $\frac{\|p\|_{\SArm}}{\|p\|_\infty}$ over all degree $n$ polynomials $p$ is a non-convex optimization problem (since we are maximizing a function that is not concave) with many local maxima. Thus, it seems difficult to get a feeling for whether von Neumann's inequality for $4$-variable homogeneous polynomials holds up to a constant from such numerical experiments alone.

Finally, we remark that another way of realizing the square of the Schur--Agler norm of a homogeneous polynomial as an SDP can be based on the Caratheodory interpolation type result for the Schur--Agler norm of Knese; see \cite[Theorem 1.3]{knese25}. In this case, the program will return a certificate for an upper bound of the Schur--Agler norm (an Agler decomposition). The algorithm based on Lemma \ref{lem: SA by graded} appears to be faster in practice.

Readers who wish to experiment with this algorithm can find a basic implementation at \url{https://github.com/michaelhartz/Schur-Agler-norm}.

\section{Appendix: The classical  counterexamples}\label{sec: counterexamples}
In this appendix, we explain the counterexamples in \cite{CrDa75}\cite{Dix76}\cite{Hol01}\cite{Varo74} in terms of the $L_{\Tbf,\xi}$ operators. In order to link them to our methods in Section \ref{sec: methods of constructing L}, the following general
identity for $q \in P_{d,n}$, which can be verified by direct computation, is useful:
\begin{equation}
  \label{eqn:Gamma_star_Gamma}
  \Gamma_q^* \Gamma_q = \sum_{|\alpha| \le n} M_{z^\alpha}^* \widehat{q} \otimes M_{z^\alpha}^* \widehat{q}.
\end{equation}
\begin{exam}[{\bf Varopoulos-Kaijser 1974 \cite{Varo74}}]\label{exam: Varo74}~
\begin{description}
    \item[Original construction:] Take
\[
T_1=\begin{bmatrix}
        0&0&0&0&0\\
        1&0&0&0&0\\
        0&0&0&0&0\\
        0&0&0&0&0\\
        0&\frac{1}{\sqrt{3}}&-\frac{1}{\sqrt{3}}&-\frac{1}{\sqrt{3}}&0
    \end{bmatrix},
     T_2=\begin{bmatrix}
        0&0&0&0&0\\
        0&0&0&0&0\\
        1&0&0&0&0\\
        0&0&0&0&0\\
        0&-\frac{1}{\sqrt{3}}&\frac{1}{\sqrt{3}}&-\frac{1}{\sqrt{3}}&0
    \end{bmatrix},
    T_3=\begin{bmatrix}
        0&0&0&0&0\\
        0&0&0&0&0\\
        0&0&0&0&0\\
        1&0&0&0&0\\
        0&-\frac{1}{\sqrt{3}}&-\frac{1}{\sqrt{3}}&\frac{1}{\sqrt{3}}&0
    \end{bmatrix},
\]
and take $p=z_1^2+z_2^2+z_3^2-2z_1z_2-2z_1z_3-2z_2z_3.$ Then one verifies
\[
p(\Tbf)=\begin{bmatrix}
    0&0&0&0&0\\0&0&0&0&0\\0&0&0&0&0\\0&0&0&0&0\\3\sqrt{3}&0&0&0&0
\end{bmatrix}.
\]
So
\[
\|p(\Tbf)\|=3\sqrt{3}.
\]
Meanwhile, in \cite{Varo74}, it was shown that $\|p\|_\infty=5$, which is strictly less than $\|p(\Tbf)\|$.
\item[The $L$ operator:] We give the details of computing the $L$ operator in this example, and directly give the results in the rest of the examples. From the expression of $p(\Tbf)$, the obvious choice is $\xi=\begin{bmatrix}
    1&0&0&0&0
\end{bmatrix}^T$. By direct computation, we verify that
\[
\la \xi, \xi\ra=1,\quad\la T_i\xi,T_j\xi\ra=\delta_{i,j},
\]
\[
\la T_i^2\xi, T_jT_k \xi\ra=-\frac{1}{3}\text{ if }j\neq k,\quad\la T_i^2\xi,T_j^2\xi\ra=\frac{1}{3},\quad\la T_iT_j\xi, T_kT_l\xi\ra=\frac{1}{3}\text{ if }i\neq j, k\neq l,
\]
and $\la T^\alpha \xi, T^\beta \xi\ra=0$ for other cases. In other words,
\[
\la L_{\Tbf,\xi}1,1\ra=1,\quad\la L_{\Tbf,\xi}z_i,z_j\ra=\delta_{i,j},
\]
\[
\la L_{\Tbf,\xi}z_i^2, z_jz_k\ra=-\frac{1}{3}\text{ if }j\neq k,\quad\la L_{\Tbf,\xi}z_i^2, z_j^2\ra=\frac{1}{3},\quad\la L_{\Tbf,\xi}z_iz_j, z_kz_l\ra=\frac{1}{3}\text{ if }i\neq j, k\neq l,
\]
and $\la L_{\Tbf,\xi}z^\alpha,z^\beta\ra=0$ in other cases. 
This gives
\begin{flalign*}
L_{\Tbf,\xi}
=&1\otimes1+\sum_{i=1}^3z_i\otimes z_i+\frac{1}{3}q\otimes q,
\end{flalign*}
where $q(z)=z_1^2+z_2^2+z_3^2-z_1z_2-z_2z_3-z_1z_3$.
\item[Interpretation:] For $i=1, 2, 3$,
\[
M_{z_i}^\ast L_{\Tbf,\xi}M_{z_i}=1\otimes 1+\frac{1}{3}\left(M_{z_i}^\ast q\right)\otimes\left(M_{z_i}^\ast q\right).
\]
Since $\|M_{z_i}^*q\|_2^2={3}$, we have $\left\|\frac{1}{3}\left(M_{z_i}^\ast q\right)\otimes\left(M_{z_i}^\ast q\right)\right\|=1$. So $\frac{1}{3}\left(M_{z_i}^\ast q\right)\otimes\left(M_{z_i}^\ast q\right)$ is less than the projection operator on $P_1$, which is $\sum_{i=1}^3z_i\otimes z_i$. Therefore $L_{\Tbf,\xi}\in\LLL_c$. In fact, from its expression, $L_{\Tbf,\xi}\in\LLL_c^{(2)}(\frac{q}{\sqrt{3}})$. We can now explain this counterexample using Lemma \ref{lem: SA norm with Lc}: write $L=L_{\Tbf,\xi}$, then
\[
\|p\|_{\SArm}^2\geq\frac{\la Lp,p\ra_2}{\la L1,1\ra_2}=\frac{|\la p, q\ra_2|^2}{3}=27>25=\|p\|_\infty^2.
\]
We can also explain with Theorem \ref{thm:dual_norm_general}: 
\[
\|q\|_\ast\leq\sqrt{3}\quad\Rightarrow\quad\|p\|_{\SArm}\geq\frac{|\la p, q\ra_2|}{\|q\|_*}\geq3\sqrt{3}>5=\|p\|_\infty.
\]
(In fact, Example \ref{eqn: pt dual} shows that  $\|q\|_\ast = \frac{5}{3}$.)
Using Identity $\eqref{eqn:Gamma_star_Gamma}$, we can also write
\begin{equation*}
  L = \frac{1}{3} \Gamma_q^* \Gamma_q \big|_{P_2} + P_{\le 1}.
\end{equation*}
Since $\|M_{z_i}^* \Gamma_q \big|_{P_2}\|^2 = 3$ for $i=1,2,3$, we see that this example can also be obtained from Method 2 in Section \ref{sec: methods of constructing L}.

\end{description}
\end{exam}

\begin{exam}[{\bf Crabb-Davie 1975 \cite{CrDa75}}]\label{exam: CD}~
\begin{description}
    \item[Original construction:] $\Tbf=(T_1,T_2,T_3)$ is given by the following mapping between the standard orthonormal basis of $\CC^8$:
    \begin{flalign*}
    T_1: &e_1\to e_2\to(-e_5)\to(-e_8)\to0, e_3\to e_7\to0, e_4\to e_6\to0\\
    T_2: &e_1\to e_3\to(-e_6)\to(-e_8)\to0, e_2\to e_7\to0, e_4\to e_5\to0\\
    T_3: &e_1\to e_4\to(-e_7)\to(-e_8)\to0, e_2\to e_6\to0, e_3\to e_5\to0,
    \end{flalign*}
    and
    \[
    p=z_1z_2z_3-z_1^3-z_2^3-z_3^3.
    \]
    Then $p(\Tbf)e_1=4e_8$. Thus $\|p(\Tbf)\|\geq4>\|p\|_\infty$.
    \item[The $L$ operator:] Take $\xi=e_1$. Similarly as above, we compute
    \begin{flalign*}
    L=L_{\Tbf,\xi}=&1\otimes1 +\sum_{i=1}^3z_i\otimes z_i \\
                   &+(z_1z_2-z_3^2)\otimes(z_1z_2-z_3^2)+(z_2z_3-z_1^2)\otimes(z_2z_3-z_1^2)+(z_1z_3-z_2^2)\otimes(z_1z_3-z_2^2)\\
    &+(z_1z_2z_3-z_1^3-z_2^3-z_3^3)\otimes(z_1z_2z_3-z_1^3-z_2^3-z_3^3)\\
    =&P_{\leq 1}+M_{z_1}^\ast p\otimes M_{z_1}^\ast p+M_{z_2}^\ast p\otimes M_{z_2}^\ast p+M_{z_3}^\ast p\otimes M_{z_3}^\ast p+p\otimes p.
    \end{flalign*}
    \item[Interpretation:] Write $L=\sum_{k=0}^3L_k$, where
    \[
    L_0=1\otimes1,\quad L_1=\sum_i z_i\otimes z_i,\quad L_2=\sum_iM_{z_i}^\ast p\otimes M_{z_i}^\ast p,\quad L_3=p\otimes p.
    \]
    It is obvious that $M_{z_i}^\ast L_k M_{z_i}\leq L_{k-1}$ for $k=1, 3$. For $k=2$, note that 
\end{description}
 \[
 M_{z_i}^\ast L_2 M_{z_i}=\sum_{j=1}^3M_{z_iz_j}^\ast p\otimes M_{z_iz_j}^\ast p=L_1.
 \]
 So $L\in\LLL_c^{(3)}(p)$. This implies $\|p\|_\ast\leq 1$. By Lemma \ref{lem: dual SA geq coefficients}, we actually have
 \[
 \|p\|_\ast=1,\quad\text{which implies}\quad\|p\|_{\SArm}\geq\frac{\|p\|_2^2}{\|p\|_\ast}=4>\|p\|_\infty.
 \]
 Using \eqref{eqn:Gamma_star_Gamma}, an alternative way of interpreting $L$ is that
 \[
 L=\Gamma_p^\ast\Gamma_p\big|_{P_{\geq2}}+\max_i\left\|M_{z_i}^\ast\Gamma_p\big|_{P_1}\right\|^2\cdot P_{\leq1}.
 \]
 Thus, this example is also captured by Method 2.

 We also remark that the $L$ operator of the Kaijser-Varopoulos example is  a compression of the $L$ operator
 of the Crabb--Davie example. Indeed, on $P_2$, the $L$ operator of Kaijser--Varopoulos is the compression
 of the $L$ operator of Crabb--Davie to the one dimensional subspace spanned by $q$. Thus, the two counterexamples
 are more closely related than might be apparent at first glance.
\end{exam} 

\begin{exam}[{\bf Holbrook 2001 \cite{Hol01}}]\label{Exam: Hol}
In \cite{Hol01}, Holbrook gave a counterexample of the von Neumann's inequality consisting of three $4\times4$ matrices. Later, Knese proved in \cite{Knese16} that the von Neumann's inequality holds at dimension 3. Thus $4$ is the minimal dimension that the inequality fails. 

\begin{description}
    \item[Original construction:] The Holbrook counterexample is constructed as follows. Let $e, h$ be orthonormal in $\CC^4$, and let $f_1, f_2, f_3$ be unit vectors that span $\{e, h\}^\perp$, such that $\la f_i, f_j\ra=-\frac{1}{2}$ for $i\neq j$. Define the operators $T_1, T_2, T_3$ by 
\[
T_ke=f_k,\quad T_kf_j=\begin{cases}
    h&\text{ if }k=j\\
    -\frac{1}{2}h&\text{ if }k\neq j
\end{cases},\quad T_k h=0.
\]
Let $p=z_1^2+z_2^2+z_3^2-2z_1z_2-2z_2z_3-2z_1z_3$, as in the Varopoulos-Kaijser counterexample. Then $\|p(\Tbf)\|=6=\frac{6}{5}\|p\|_\infty$.
    \item[The $L$ operator:]  We compute
\begin{flalign*}
L_{\Tbf,e}=1\otimes1+\sum_{i=1}^3z_i\otimes z_i-\frac{1}{2}\sum_{i\neq j}z_i\otimes z_j+q\otimes q,
\end{flalign*}
where
\[
q=z_1^2+z_2^2+z_3^2-\frac{1}{2}(z_1z_2+z_2z_3+z_1z_3),
\]
\item[Interpretation:] Again, write $L=L_{\Tbf,e}=L_0+L_1+L_2$, where
\[
L_0=1\otimes 1,\quad L_1=\sum_{i=1}^3z_i\otimes z_i-\frac{1}{2}\sum_{i\neq j}z_i\otimes z_j,\quad L_2=q\otimes q.
\]
We observe that $\|M_{z_i}^\ast L_2 M_{z_i}\|=\|M_{z_i}^\ast q\|_2^2=\frac{3}{2}, i=1, 2, 3$, and $L_1$ is exactly $\frac{3}{2}$ times the projection operator into $\mathrm{span}\{M_{z_i}^\ast q\}_{i=1}^3$. Therefore $M_{z_i}^\ast L_2M_{z_i}\leq L_1$. We verify directly that
\[
M_{z_i}^\ast L_1M_{z_i}=L_0,\quad i=1, 2, 3.
\]
Therefore $L\in\LLL_c^{(2)}(q)$, which implies $\|q\|_\ast\leq1$. Again, by Lemma \ref{lem: dual SA geq coefficients},
\[
\|q\|_\ast=1,\quad\text{which implies}\quad\|p\|_{\SArm}\geq\frac{|\la p, q\ra_2|}{\|q\|_\ast}=6>5=\|p\|_\infty.
\]
As mentioned in Section \ref{sec: methods of constructing L}, this example can also be explained with Methods 3 and  4.
\end{description}
\end{exam}

\begin{exam}[{\bf Dixon 1976 \cite{Dix76}}]\label{exam: Dixon} In Dixon's proof of the lower bound for $C(d,n)$, he developed a method for constructing counterexamples to the von Neumann inequality.
\begin{description}
    \item[Original construction:] Assume $n=2r+1$ is an odd number, and $d>n$. From a combinatorial lemma (\cite[Lemma 3.2]{Dix76}), Dixon constructed a set $\SSS=\{A_j\}_{j=1}^N$ satisfying the following.
    \begin{enumerate}
        \item Each $A_i$ is a subset of $\{1, 2, \cdots, d\}$ containing exactly $n$ elements;
        \item for $j\neq k$, $|A_j\cap A_k|<t=r+1$;
        \item $N\geq{d\choose n}{d\choose n-t}^{-1}{n\choose t}^{-1}$.
    \end{enumerate}
    For $A=\{i_1,\cdots,i_n\}\in\SSS$, write $z^A=z_{i_1}\cdots z_{i_n}$. A result of Kahane, Salem and Zygmund \cite[Chapter 6, Theorem 4]{KahaneBook85}, see also \cite[Lemma 3.1]{Dix76}, ensures that there exist a choice of signs $\{\epsilon_A\}_{A\in\SSS}, \epsilon_A=\pm1$, so that 
    \[
    p(z)=\sum_{A\in\SSS}\epsilon_A z^A,
    \]
    has supremum norm
    \[
    \|p\|_\infty<C\left(dN\log n\right)^{1/2},
    \]
    where $C$ is an absolute constant. Let $\HHH$ be the finite-dimensional Hilbert space with orthonormal basis
    \[
    e(j_1,\cdots,j_m), f(j_1,\cdots,j_m),\quad0\leq m\leq r, 1\leq j_1\leq\cdots\leq j_m\leq d.
    \]
    For $m=0$, simply write as $e$ and $f$. Define
    \[
    T_le(j_1,\cdots,j_m)=e(j_1,\cdots,j_h,l,j_{h+1},\cdots,j_m),\quad 0\leq m\leq r-1,
    \]
    where $h$ is such that $j_h\leq l\leq j_{h+1}$;
    \[
    T_lf(j_1,\cdots,j_m)=\begin{cases}
        f(j_1,\cdots,j_h,j_{h+1},\cdots,j_m)&l=j_h\\
        0&l\notin\{j_1,\cdots,j_m\}
    \end{cases}
    \quad0\leq m\leq r;
    \]
    \[
    T_l e(j_1,\cdots,j_r)={\sum}'\epsilon_{\{l,j_1,\cdots,j_r,i_1,\cdots,i_r\}}f(i_1,\cdots,i_r).
    \]
    Here the summation $\sum'$ is over all integers $1\leq i_1\leq\cdots\leq i_r\leq d.$ One can verify that $\Tbf$ is a commuting tuple of contractions and $p(\Tbf)e=Nf$. Therefore
    \[
    \|p\|_{\SArm}\geq\|p(\Tbf)\|\geq N.
    \]
    Consequently,
    \begin{flalign}\label{eqn: C(d,2r+1) lower bound}
    C(d,2r+1)\geq&\frac{\|p\|_{\SArm}}{\|p\|_\infty}\geq\frac{N}{C\left(dN\log n\right)^{1/2}}=\frac{N^{1/2}}{C(d\log n)^{1/2}}\geq \left(\frac{{d\choose n}{d\choose n-t}^{-1}{n\choose t}^{-1}}{C^2d\log n}\right)^{1/2}\nonumber\\
    =&\left(\frac{(r!)^2(r+1)!}{((2r+1)!)^2C^2\log (2r+1)}\cdot\frac{(d-r)!}{d(d-2r-1)!}\right)^{1/2}\\
    \approx_r& \, d^{r/2}.\nonumber
    \end{flalign}
    \item[The $L$ operator:] By direct computation,
    \[
    \la\Tbf^\alpha e, \Tbf^\beta e\ra=\begin{cases}
        0&|\alpha|\neq|\beta|,\text{ or }\max\{|\alpha|, |\beta|\}>n\\
        \delta_{\alpha,\beta},&|\alpha|=|\beta|\leq r\\
        \sum_\gamma\epsilon_{\gamma\cup\alpha}\epsilon_{\gamma\cup\beta},&|\alpha|=|\beta|>r,
    \end{cases}
    \]
    where in the last case, the summation is taken over all $\gamma$ such that $|\gamma|+|\alpha|=n$, and both $\gamma\cup\alpha$ and $\gamma\cup\beta$ are in $\SSS$.
    From this we can verify that
    \[
    L=L_{\Tbf,e}=\sum_{|\gamma|\leq r}M_{z^\gamma}^\ast p\otimes M_{z^\gamma}^\ast p+P_{\leq r}.
    \]
    \item[Interpretation:] Again, write $L=\sum_{k=0}^nL_k,~L_k\in\BBB(P_k)$. Then it is obvious that $M_{z_i}^\ast L_kM_{z_i}\leq L_{k-1}$ for $k\neq r+1$. Meanwhile,
    \[
    M_{z_i}^\ast L_{r+1}M_{z_i}=\sum_{|\gamma|=r}M_{z_iz^\gamma}^\ast p\otimes M_{z_iz^\gamma}^\ast p.
    \]
    From condition (2) of the set $\SSS$, each $M_{z_iz^\gamma}^\ast p$ is either zero or a monomial, and for different $\gamma$, they are distinct. Therefore $M_{z_i}^\ast L_{r+1}M_{z_i}\leq P_r=L_r$. Also note that $L_n=p\otimes p$. So $L\in\LLL_c^{(n)}(p)$. By the above and Lemma \ref{lem: dual SA geq coefficients},
    \[
    \|p\|_\ast=1,\quad\text{which implies}\quad\|p\|_{\SArm}\geq\frac{\|p\|_2^2}{\|p\|_\ast}=N.
    \]
    The rest of the proof is the same as the original construction.
    Again by \eqref{eqn:Gamma_star_Gamma}, we see that
    \begin{equation*}
      L = \Gamma_p^* \Gamma_p \big|_{\ge r+1} + P_{\le r},
    \end{equation*}
    and $\max_i \| M_{z_i}^* \Gamma_p \big|_{P_r}\| = 1$,
    so Dixon's example can also be explained with Method 2.
    
\end{description}
\begin{rem}\label{rem: Dixon exponential growth}
    If we take $d=kr$ in \eqref{eqn: C(d,2r+1) lower bound}, with $k$ large enough, then the above gives
    \[
    C(kr,2r+1)\gtrsim\frac{1}{r^{1/4}(\log(2r+1))^{1/2}}\left(\frac{(1+\frac{1}{k-2})^{k-1}(k-1)}{2^4}\right)^{r/2}\gtrsim 2^{r/2}=2^{\frac{d}{2k}}.
    \]
    Therefore for some large $k$,
    \[
    C(d)\gtrsim 2^{\frac{d}{2k}}.
    \]
    This shows that $C(d)$ must grow exponentially. In fact, as explained above, the construction of Dixon's examples matches Method 2. Thus the estimates above actually shows that
    \[
    \sup \left\{ \frac{\vertiii{p}_2}{\|p\|_\infty}: p \in P_{d,n}, n \in \mathbb N \right\}\gtrsim 2^{\frac{d}{2k}}.
    \]
  \end{rem}
\end{exam}

\bibliographystyle{plain}
\bibliography{reference}
	
\end{document}